\newtheorem{theorem}{Theorem}
\newtheorem{lemma}[theorem]{Lemma}
\newtheorem{cor}[theorem]{Corollary}
\theoremstyle{definition}
\newtheorem{defn}[theorem]{Definition}
\theoremstyle{remark}
\newtheorem{rem}[theorem]{Remark}
\DeclareMathOperator{\diam}{diam}
\def\esssup{\operatornamewithlimits{ess\,sup}}
\def\essinf{\operatornamewithlimits{ess\,inf}}
\def\PP{\mathcal{P}}
\def\PPln{\mathcal{P}^{\log}}
\newcommand{\R}{{\mathbb R}}
\newcommand{\Rn}{{\mathbb R}^n}
\newcommand{\Om}{\Omega}
\newcommand{\ti}{\tilde }
\newcommand{\eps}{\varepsilon}
\newcommand{\bdry}{\partial}
\def\div{{\rm div}}
\def\capp{{\rm cap}}
\def\px{{p(\cdot)}}
\def\p{p(x)}
\newcommand{\kom}[1]{}
\renewcommand{\kom}[1]{{\bf [#1]}}
\definecolor{blau}{rgb}{0.1,0.0,0.9}
\newcounter{komcounter}
\numberwithin{komcounter}{section}
\def\vint{\mathop{\mathchoice%
          {\setbox0\hbox{$\displaystyle\intop$}\kern 0.22\wd0%
           \vcenter{\hrule width 0.6\wd0}\kern -0.82\wd0}%
          {\setbox0\hbox{$\textstyle\intop$}\kern 0.2\wd0%
           \vcenter{\hrule width 0.6\wd0}\kern -0.8\wd0}%
          {\setbox0\hbox{$\scriptstyle\intop$}\kern 0.2\wd0%
           \vcenter{\hrule width 0.6\wd0}\kern -0.8\wd0}%
          {\setbox0\hbox{$\scriptscriptstyle\intop$}\kern 0.2\wd0%
           \vcenter{\hrule width 0.6\wd0}\kern -0.8\wd0}}%
          \mathopen{}\int}
\begin{document}

\title{The boundary Harnack inequality for variable exponent $p$-Laplacian, Carleson estimates, barrier functions and $\px$-harmonic measures}

\author{
Tomasz Adamowicz{\small{$^1$}}
\\
\it\small Institute of Mathematics of the Polish Academy of Sciences \\
\it\small 00-956 Warsaw, Poland\/{\rm ;}
\it\small T.Adamowicz@impan.pl
\\
\\
Niklas L.P. Lundstr\"{o}m
\\
\it \small Department of Mathematics and Mathematical Statistics, Ume{\aa} University\\
\it \small SE-90187 Ume{\aa}, Sweden\/{\rm ;}
\it \small niklas.lundstrom@math.umu.se
}

\date{}
\maketitle

\footnotetext[1]{T. Adamowicz was supported by a grant of National Science Center, Poland (NCN),
UMO-2013/09/D/ST1/03681.}

\begin{abstract}

We investigate various boundary decay estimates for $\px$-harmonic functions.
For domains in $\R^n, n\geq 2$ satisfying the ball condition ($C^{1,1}$-domains) we show the boundary Harnack inequality for $\px$-harmonic functions under the assumption that the variable exponent $p$ is a bounded Lipschitz function.
The proof involves barrier functions and chaining arguments.
Moreover, we prove a Carleson type estimate for $\px$-harmonic functions in NTA domains in $\R^n$ and provide lower- and upper- growth estimates and a doubling property for a $\px$-harmonic measure.\\

\noindent  \emph{Keywords:} Ball condition, barrier function, boundary Harnack inequality, Carleson estimate, exterior ball condition, harmonic measure, Harnack inequality, interior ball condition, NTA domain, nonstandard growth equation, $p$-harmonic, $p(x)$-harmonic, $p$-Laplace, $p(x)$-supersolution, uniform domain, variable exponent
\medskip

\noindent
\emph{Mathematics Subject Classification (2010):} Primary 31B52; Secondary 35J92, 35B09, 31B25.

\end{abstract}

\setcounter{theorem}{0}
\setcounter{equation}{0}


\section{Introduction}

\setcounter{theorem}{0}
\setcounter{equation}{0}

 The studies of boundary Harnack inequalities for solutions of differential equations have a long history.
In the setting of harmonic functions on Lipschitz domains such a result was first proposed by Kemper~\cite{K} and later studied by Ancona~\cite{An}, Dahlberg~\cite{D77} and Wu~\cite{W}. Subsequently, Kemper's result was extended by Caffarelli-Fabes-Mortola\--Salsa \cite{CFMS} to a class of elliptic equations, by Jerison--Kenig \cite{JK82} to the setting of non-tangentially accessible (NTA) domains, Ba\~nuelos--Bass--Burdzy \cite{bbb} and Bass--Burdzy \cite{bburd} studied the case of H\"older domains while Aikawa \cite{Ai} the case of uniform domains. The extension of these results to the more general setting of $p$-harmonic operators turned out to be difficult, largely due to the nonlinearity of $p$-harmonic functions for $p\not=2$.
However, recently there has been a substantial progress in studies of boundary Harnack inequalities for nonlinear Laplacians:  Aikawa--Kilpel\"ainen--Shanmugalingam--Zhong~\cite{AKSZ07} studied the case of $p$-harmonic functions in $C^{1,1}$-domains, while Lewis--Nystr\"om~\cite{LN07, LN10, LN12} considered more general geometry such as
Lipschitz and Reifenberg-flat domains. Lewis--Nystr\"om results have been partially generalized to operators with variable coefficients, Avelin--Lundstr\"om--Nystr\"om~\cite{lower_order}, Avelin--Nystr\"om~\cite{avelin_kaj},
and to $p$-harmonic functions in the Heisenberg group, Nystr\"om~\cite{Kaj_new}.
Moreover, in \cite{N11} the second author proved a boundary Harnack inequality for $p$-harmonic
functions with $n<p\leq \infty$ vanishing on a $m$-dimensional hyperplane in $\mathbb{R}^n$ for $0 \leq m \leq n-1$.
We also refer to Bhattacharya~\cite{B02} and Lundstr\"om--Nystr\"om~\cite{LN11} for the case $p = \infty$,
where the latter investigated $A$-harmonic and Aronsson-type equations in planar uniform domains.
Concerning applications of boundary Harnack inequalities we mention free boundary problems and
studies of the Martin boundary.

 Another recently developing branch of nonlinear analysis is the area
of differential equations with nonstandard growth (variable exponent analysis)
and related variational functionals. The following equation, called the $\px$-Laplace equation,
serves as the model example:
\begin{equation}\label{eq:intro}
\div(|\nabla u|^{\px-2}\nabla u)\,=\,0,
\end{equation}
for a measurable function $p:\Om\to [1,\infty]$ called a variable exponent.
The variational origin of this equation naturally implies that solutions belong to the appropriate Musielak-Orlicz space $W^{1,\px}(\Om)$ (see
Preliminaries). If $p=const$, then this equation becomes the classical $p$-Laplacian.

 Apart from interesting theoretical considerations such equations arise in the applied sciences, for instance
in fluid dynamics, see e.g. Diening--R\r u\v zi\v cka~\cite{dr}, in the study of image processing, see for example Chen-Levine-Rao~\cite{clr} and electro-rheological fluids, see e.g. Acerbi--Mingione~\cite{AM, am02}; we also refer to Harjulehto--H\"ast\"o--L\^e--Nuortio~\cite{hhn} for a recent survey and further
references. In spite of the symbolic similarity to the constant exponent $p$-harmonic equation, various unexpected phenomena may occur when the exponent is a function, for instance the minimum of the
$\px$-Dirichlet energy may not exist even in the one-dimensional
case for smooth functions $p$; also smooth functions need not be
dense in the corresponding variable exponent Sobolev spaces. Although equation \eqref{eq:intro} is the Euler-Lagrange equation of the $\px$-Dirichlet energy, and thus is natural to study, it has many disadvantages comparing to the $p=const$ case. For instance solutions of \eqref{eq:intro} are, in general, not scalable, also the Harnack inequality is nonhomogeneous with constant depending on solution. In a consequence, the analysis of nonstandard growth equation is often difficult and leads to technical and nontrivial estimates (nevertheless, see Adamowicz--H\"ast\"o~\cite{adha1, adha2} for a variant of equation~\eqref{eq:intro} that overcomes some of the aforementioned difficulties, the so-called strong $\px$-harmonic equation).

\medskip

 The main goal of this paper is to show the boundary Harnack inequality for $\px$-harmonic functions on domains satisfying the ball condition (see Theorem~\ref{thm:harnack} below). Let us briefly describe main ingredients leading to this result, as it requires number of auxiliary lemmas and observations which are interesting per se and can be applied in other studies of variable exponent PDEs.

 In Section~\ref{Sect-Carleson} we study oscillations of $\px$-harmonic functions close to the boundary of a domain and prove, among other results, variable exponent Carleson estimates on NTA-domains, cf. Theorem~\ref{Carleson}. Similar estimates play important role, for instance in studies of the Laplace operator, in particular in relations between the topological boundary and the Martin boundary of the given domain; also in the $p$-harmonic analysis (see presentation in Section~\ref{Sect-Carleson} for further details and references). The main tools used in the proof of Theorem~\ref{Carleson}
are Hölder continuity up to the boundary, Harnack's inequality and an argument by
Caffarelli--Fabes--Mortola--Salsa \cite{CFMS} which, in our situation,
relies on various geometric concepts such as quasihyperbolic geodesics and related chaining arguments;
also on characterizations of uniform and NTA domains.

Section~\ref{Sect-barriers} is devoted to introducing two types of barrier functions,
called Wolanski-type and Bauman-type barrier functions, respectively.
In the analysis of PDEs, barrier functions appear, for example, in comparison arguments and in establishing growth conditions for functions, see e.g. Aikawa--Kilpel\"ainen--Shanmugalingam--Zhong~\cite{AKSZ07}, Lundstr\"om~\cite{N11}, Lundstr\"om--Vasilis~\cite{LV13} for the setting of $p$-harmonic functions. Furthermore, barriers can be applied in the solvability of the Dirichlet problem, especially in studies of regular points, see e.g. Chapter~6 in Heinonen--Kilpeläinen--Martio~\cite{hkm} and Chapter~11 in Bj\"orn--Bj\"orn~\cite{bb}. We would like to mention that our results on barriers
enhance the existing results in variable exponent setting, see Remark~\ref{Wolanski-comment}.

In Section~\ref{Sect-estimates} we prove our main results, a boundary Harnack inequality and growth estimates
for $\px$-harmonic functions vanishing on a portion of the boundary of a domain
$\Omega \subset \mathbb{R}^n$ satisfying the ball condition.
We refer to Section \ref{sect:prelim} for a definition of the ball condition
and point out that a domain satisfies the ball condition if and only if its boundary is $C^{1,1}$-regular.
Let us now briefly sketch our results. Let $w \in \partial \Omega$ and $r>0$ be small and suppose that $p$ is a bounded Lipschitz continuous variable exponent.
Assume that $u$ is a positive $\px$-harmonic function in
$\Omega \cap B(w, r)$ vanishing continuously on $\partial \Omega \cap B(w,r)$.
Then we prove that
\begin{align}\label{eq:intro1}
\frac1C \frac{d(x, \bdry\Om)}{r} \leq u(x) \leq  C \frac{d(x, \bdry\Om)}{r} \qquad \text{whenever} \quad x
\in \Omega \cap B(w, r/\ti c),
\end{align}
 for constants $\ti c$ and $C$ whose values depend on the geometry of $\Om$, variable exponent $p$ and certain features of $u$ and $v$, see the statement of Theorem~\ref{thm:harnack}. Here $d(x, \bdry\Om)$ denotes the Euclidean distance from $x$ to $\partial\Omega$.
Inequality \eqref{eq:intro1} says that $u$ vanishes at the same rate as the distance to the boundary
when $x$ approaches the boundary.

Suppose that $v$ satisfies the same assumptions as $u$ above.
An immediate consequence of \eqref{eq:intro1} is then the following boundary Harnack inequality:
\begin{align*}
\frac1C \leq \frac{u(x)}{v(x)} \leq  C  \qquad \text{whenever} \quad x
\in \Omega \cap B(w, r/\ti c),
\end{align*}
saying that $u$ and $v$ vanishes at the same rate as $x$ approaches the boundary (see Theorem \ref{thm:harnack} in Section~\ref{Sect-estimates}).
Among main tools used in the proof of boundary Harnack estimates let us mention Lemmas~\ref{le:lower} and \ref{le:upper} where we show the lower- and upper estimates for the rate of decay of a $\px$-harmonic function close to a boundary of the domain. It turns out that the geometry of the domain affects the number and type of parameters on which the rate of decay depends. Namely, our estimates depend on whether a domain satisfies the interior ball condition or the ball condition in Lemma 5.1 , cf. parts (i) and (ii) of Lemma 5.1.
Besides the ball condition, the proof of \eqref{eq:intro1} uses the barrier functions derived in Section~\ref{Sect-barriers}, the comparison principle and Harnack's inequality.
Our approach extends arguments from Aikawa--Kilpel\"ainen--Shanmugalingam--Zhong~\cite{AKSZ07}
to the case of variable exponents. We point out that the constants in \eqref{eq:intro1}, and thus also in the boundary Harnack inequality, depend on $u$ and $v$. Such a dependence is expected for variable exponent PDEs and difficult to avoid, as e.g. parameters in the Harnack inequality Lemma~\ref{harnack} and the barrier functions depend on solutions as well.

 Finally, in Section~\ref{Sect-measures} we define and study a lower and upper estimates for a $\px$-harmonic measure. We also prove a weak doubling property for such measures.
In the constant exponent setting similar results were obtained by Eremenko--Lewis~\cite{EL91}, Kilpel\"ainen--Zhong~\cite{KZ03} and Bennewitz--Lewis \cite{BL}.
For $p = const$, $p$-harmonic measures were employed to prove boundary Harnack
inequalities, see e.g. \cite{BL}, Lewis--Nystr\"om~\cite{LN08} and Lundstr\"om--Nystr\"om~\cite{LN11}.
The $p$-harmonic measure, defined as in the aforementioned papers, as well as boundary Harnack inequalities, have played a significant role when studying free boundary problems, see e.g. Lewis--Nystr\"om \cite{LN12}.

\section{Preliminaries}\label{sect:prelim}

We let $ \bar \Om$ and $\partial \Om$ denote, respectively, the closure and the boundary of the
set $\Om \subset \R^{n}$, for $n\geq 2$. We define $d(y,
\Om)$ to equal the Euclidean distance from $y \in \R^{n} $ to $\Om$, while $ \langle \cdot,
\cdot \rangle$ denotes the standard inner product on $ \R^{2} $ and
$ | x | = \langle x, x \rangle^{1/2} $ is the Euclidean norm of $x$.
Furthermore, by $B(x, r)=\{y \in \R^{n} : |x - y|<r\}$ we denote a
ball centered at point $x$ with radius $r>0$ and we let $dx$ denote
the $n$-dimensional Lebesgue measure on $\R^{n}$. If $\Om \subset
\R^n $ is open and $1 \leq q < \infty$, then by $W^{1 ,q} (\Om)$,
$W^{1 ,q}_0 (\Om)$ we denote the standard Sobolev space and the Sobolev space of functions with zero boundary values, respectively. Moreover, let $\Delta(w, r) = B(w,r)\cap \partial \Omega$. By $f_A$ we denote the integral average of $f$ over a set $A$.

For background on variable exponent function spaces we refer to the
monograph by Diening--Harjulehto--H\"ast\"o--R\r u\v zi\v cka~\cite{DHHR}.

A measurable function $p\colon \Omega\to [1,\infty]$ is called a
\emph{variable exponent}, and we denote
\[
p^+_A:=\esssup_{x\in A} p(x),\quad p^-_A:=\essinf_{x\in A} p(x),
\quad p^+:=p_\Omega^+ \quad\text{and}\quad p^-:=p_\Omega^-
\]
for $A\subset\Omega$.  If $A=\Om$ or if the underlying domain is fixed, we will often skip the index and set $p_A=p_\Om=p$.

In this paper we assume that our variable exponent functions are bounded, i.e.
\[
 1<p^-\leq p(x)\leq p^+<\infty\qquad \hbox{for almost every } x\in \Om.
\]
The set of all such exponents in $\Om$ will be denoted  $\PP(\Om)$.

The function $\alpha$ defined in a bounded domain $\Omega$ is said to be
\textit{$\log$-H\"older continuous} if there is constant
$L>0$ such that
\[
|\alpha(x)-\alpha(y)|\leq \frac{L}{\log(e+1/|x-y|)}
\]
for all $x,y\in \Omega$. We denote $p\in \PPln(\Omega)$ if
$1/p$ is $\log$-H\"older continuous; the smallest constant for which $\frac1p$ is
$\log$-H\"older continuous is denoted by $c_{\log}(p)$. If $p\in \PPln(\Omega)$,
then
\begin{equation}\label{logH-ball}
|B|^{p^+_B} \approx |B|^{p^-_B} \approx |B|^{p(x)} \approx |B|^{p_B}
\end{equation}
for every ball $B\subset \Omega$ and $x\in B$; here $p_B$ is the harmonic
average, $\frac1{p_B} := \vint_B \frac1{p(x)}\, dx$. The constants in the equivalences depend on $c_{\log}(p)$ and $\diam \Omega$. One of the immediate consequences of (\ref{logH-ball}) is that if $x\in B(w,r)$, then
\begin{equation}\label{logH-ball2}
\frac1c r^{-p(w)}\leq r^{-p(x)}\leq c r^{-p(w)}
\end{equation}
with $c$ depending only on constants in (\ref{logH-ball}).

 In this paper we study only log-H\"older continuous or Lipschitz continuous variable exponents. Both types of exponents can be extended to the whole $\R^n$ with their constants unchanged, see \cite[Proposition 4.1.7]{DHHR} and McShane-type extension result in Heinonen~\cite[Theorem~6.2]{hei01}, respectively. Therefore, without loss of generality we assume below that variable exponents are defined in the whole $\R^n$.
\smallskip

We define a \emph{(semi)modular} on the set of measurable functions
by setting
\[
\varrho_{L^{p(\cdot)}(\Omega)}(u) :=\int_{\Omega} |u(x)|^{p(x)}\,dx;
\]
here we use the convention $t^\infty = \infty \chi_{(1,\infty]}(t)$
in order to get a left-continuous modular, see
\cite[Chapter~2]{DHHR} for details. The \emph{variable
exponent Lebesgue space $L^{p(\cdot)}(\Omega)$} consists of all
measurable functions $u\colon \Omega\to \R$ for which the modular
$\varrho_{L^{p(\cdot)}(\Omega)}(u/\mu)$ is finite for some $\mu >
0$. The Luxemburg norm on this space is defined as
\[
\|u\|_{L^{p(\cdot)}(\Omega)}:= \inf\Big\{\mu > 0\,\colon\,
\varrho_{L^{p(\cdot)}(\Omega)}\big(\tfrac{u}\mu\big)\leq 1\Big\}.
\]
Equipped with this norm, $L^{p(\cdot)}(\Omega)$ is a Banach space. The variable exponent Lebesgue space is a special case of an Orlicz-Musielak space. For a constant function $p$, it coincides
with the standard Lebesgue space. Often it is assumed that
$p$ is bounded, since this condition is known to imply many desirable
features for $L^{p(\cdot)}(\Omega)$.

There is not functional relationship between norm and modular, but we
do have the following useful inequality:
\begin{equation}
\min\Big\{ \varrho_{L^{p(\cdot)}(\Omega)}(f)^\frac1{p^-}, \varrho_{L^{p(\cdot)}(\Omega)}(f)^\frac1{p^+} \Big\}
\le
\| f\|_{L^{p(\cdot)}(\Omega)}
\le
\max\Big\{ \varrho_{L^{p(\cdot)}(\Omega)}(f)^\frac1{p^-}, \varrho_{L^{p(\cdot)}(\Omega)}(f)^\frac1{p^+} \Big\}.
\label{ineq:norm-mod}
\end{equation}

One of the consequences of these relations is the so-called \emph{unit ball property}:
\begin{equation}\label{unit-ball}
\varrho_{L^{p(\cdot)}(\Omega)}(f)\leq 1\, \Rightarrow\, \| f\|_{L^{p(\cdot)}(\Omega)}\leq 1\quad \hbox{ and }\quad
\varrho_{L^{p(\cdot)}(\Omega)}(f)^\frac1{p^-} \le \| f\|_{L^{p(\cdot)}(\Omega)} \le \varrho_{L^{p(\cdot)}(\Omega)}(f)^\frac1{p^+}.
\end{equation}

If $E$ is a measurable set of finite measure, and $p$ and $q$
are variable exponents satisfying $q\leq p$, then $L^{p(\cdot)}(E)$
embeds continuously into $L^{q(\cdot)}(E)$. In particular,
every function $u\in L^{p(\cdot)}(\Omega)$ also belongs to
$L^{p_\Omega^-}(\Omega)$. The variable exponent H\"older inequality
takes the form
\begin{equation}
\int_\Omega f g \,dx \le 2 \, \|f\|_{L^{p(\cdot)}(\Omega)} \|g\|_{L^{p'(\cdot)}(\Omega)},\label{ineq:Holder}
\end{equation}
where $p'$ is the point-wise \textit{conjugate exponent}, $1/p(x)
+1/p'(x)\equiv 1$.
\bigskip

The \emph{variable exponent Sobolev space $W^{1,p(\cdot)}(\Omega)$}
consists of functions $u\in L^{p(\cdot)}(\Omega)$ whose
distributional gradient $\nabla u$ belongs to
$L^{p(\cdot)}(\Omega)$. The variable exponent Sobolev space
$W^{1,p(\cdot)}(\Omega)$ is a Banach space with the norm
\begin{displaymath}
\|u\|_{L^{p(\cdot)}(\Omega)}+\|\nabla u\|_{L^{p(\cdot)}(\Omega)}.
\end{displaymath}
In general, smooth functions are not dense in the variable exponent
Sobolev space, see Zhikov~\cite{Zh06} but the log-H\"older condition suffices
to guarantee that they are, see Diening--Harjulehto--H\"ast\"o--R\r u\v zi\v cka~\cite[Section~8.1]{DHHR}.
In this case, we define \emph{the Sobolev space with zero
boundary values}, $W_0^{1,\p}(\Omega)$, as the closure of $C_0^\infty(\Omega)$
in $W^{1,\px}(\Omega)$.

The Sobolev conjugate exponent is also defined point-wise,
$p^*(x):= \frac{n p(x)}{n-p(x)}$ for $p^+<n$. If $p$ is $\log$-H\"older
continuous, the Sobolev--Poincar\'e inequality
\begin{equation}\label{var-exp-Sob-Poin}
\| u-u_\Omega\|_{L^{p^*(\cdot)}(\Omega)}\le c\, \| \nabla u\|_{L^\px(\Omega)}
\end{equation}
holds when $\Omega$ is a nice domain, for instance convex or John
\cite[Section~7.2]{DHHR}. If $u\in W^{1,\px}_0(\Omega)$, then
the inequality $\| u\|_{L^{p^*(\cdot)}(\Omega)}\le c\, \| \nabla u\|_{L^\px(\Omega)}$
in any open set $\Omega$.

\begin{defn}\label{C-px-defn}
 The \emph{Sobolev $\px$-capacity} of a set $\Om \subset\R^n$ is defined as
\[
 C_{\px}(\Om):=\inf_{u}\int_{\R^n} (|u|^{\p}+|\nabla u|^{\p})\, dx,
\]
where the infimum is taken over all
$u\in W^{1,\px}(\R^n)$ such that $u\geq 1$  in a neighbourhood of  $\Om$.
\end{defn}

The properties of $\px$-capacity are similar to those in the constant case, see Theorem~10.1.2 in~\cite{DHHR}.
In particular $C_{\px}$ is an outer measure, see Theorem~10.1.1 in~\cite{DHHR}.

Another type of capacity used in the paper is the so-called \emph{relative $\px$-capacity} which appears for instance in the context of uniform $\px$-fatness (see next section and Chapter 10.2 in~\cite{DHHR} for more details).

\begin{defn}\label{cap-px-defn}
The \emph{relative $\px$-capacity} of a compact set $K\subset \Om$
is a number defined by
\begin{equation*}
\capp_{\px}(K, \Om) = \inf_{u} \int_{\Om} |\nabla u|^{\p}dx,
\end{equation*}
where the infimum is taken over all $u\in C_{0}^{\infty}(\Om)\cap W^{1, \px}(\Om)$ such that $u\geq 1$ in K.
\end{defn}
The definition extends to the setting of general sets in $\R^n$ in the same way as in the case of constant $p$, cf. \cite{DHHR} for details and further properties of the relative $\px$-capacity. In what follows we will need the following estimate, see Proposition~10.2.10 in \cite{DHHR}: for a bounded log-H\"older continuous variable exponent
$p:B(x, 2r)\to (1,n)$ it holds that
\begin{equation}\label{cap-est}
 c(n, p)r^{n-\p}\leq \capp_{\px}(\overline{B}(x, r), B(x, 2r)).
\end{equation}
The similar upper estimate holds for $r\leq 1$, cf. Lemma~10.2.9 in \cite{DHHR}.

\begin{defn}\label{px-defn}
A function $u \in W^{1, \px}_{loc}(\Om)$ is a\/
\textup{(}sub\/\textup{)}solution if
\begin{equation} \label{eq-sub-sol}
       \int_\Om |\nabla u|^{\p-2} \nabla u \cdot \nabla \phi\, dx (\le)= 0
\end{equation}
for all\/ \textup{(}nonnegative\/\textup{)} $\phi \in C_0^\infty(\Om)$.
\end{defn}
In what follows we will exchangeably be using terms (sub)solution and $\px$-(sub)solution. Similarly, we say that $u$ is a \emph{supersolution ($\px$-supersolution)} if $-u$ is a subsolution. A function which is both a subsolution and a supersolution is called a (weak) solution to the $\px$-harmonic equation. A continuous weak solution is called a \emph{$\px$-harmonic function}.

Among properties of $\px$-harmonic functions let us mention that
they are locally $C^{1,\alpha}$, see e.g. Acerbi--Mingione~\cite{AM}
or Fan~\cite[Theorem 1.1]{Fan}. Another tool, crucial from our point
of view, is the comparison principle.
\begin{lemma}[cf. Lemma 3.5 in Harjulehto--H\"ast\"o--Koskenoja--Lukkari--Marola~\cite{HHKLM}]\label{comparison_princ}
Let $u$ be a supersolution and $v$ a subsolution such that $u>v$ on $\partial \Om$ in the Sobolev sense. Then $u>v$ a.e. in $\Om$.
\end{lemma}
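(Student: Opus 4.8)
The plan is to test the weak formulations for $u$ and $v$ against the nonnegative function $\phi:=(v-u)^{+}=\max\{v-u,0\}$ and to exploit the strict monotonicity of the vector field $\xi\mapsto|\xi|^{\p-2}\xi$; the hypothesis ``$u>v$ on $\bdry\Om$ in the Sobolev sense'' I read as $(v-u)^{+}\in W^{1,\px}_{0}(\Om)$, and the argument will produce $u\ge v$ a.e.\ in $\Om$, which is what is needed in the sequel. The one delicate step is that $\phi$ is an admissible test function in \eqref{eq-sub-sol}: since $u$ and $v$ belong only to $W^{1,\px}_{\loc}(\Om)$ one cannot simply plug $\phi$ in, but $(v-u)^{+}\in W^{1,\px}_{0}(\Om)$ is, by the log-H\"older assumption on $p$, a limit in $W^{1,\px}(\Om)$ of nonnegative, compactly supported functions, and on each compact set $|\nabla u|^{\p-1},|\nabla v|^{\p-1}\in L^{\pdx}$, so the variable exponent H\"older inequality \eqref{ineq:Holder} together with a truncation $\phi\mapsto\min\{\phi,k\}$, $k\to\infty$, lets one pass to the limit; this is carried out in Harjulehto--H\"ast\"o--Koskenoja--Lukkari--Marola~\cite{HHKLM}, whose Lemma~3.5 we are quoting, and I expect it to be the main technical obstacle.

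Granting admissibility, apply \eqref{eq-sub-sol} with the test function $\phi\ge0$ to the subsolution $v$ and to the subsolution $-u$ (equivalently, to the supersolution $u$), obtaining
\[
\int_{\Om}|\nabla v|^{\p-2}\nabla v\cdot\nabla\phi\,dx\le0
\qquad\text{and}\qquad
\int_{\Om}|\nabla u|^{\p-2}\nabla u\cdot\nabla\phi\,dx\ge0 .
\]
Subtracting and using that $\nabla\phi=\nabla(v-u)$ a.e.\ on $\{v>u\}$ while $\nabla\phi=0$ a.e.\ elsewhere gives
\[
\int_{\{v>u\}}\big(|\nabla v|^{\p-2}\nabla v-|\nabla u|^{\p-2}\nabla u\big)\cdot(\nabla v-\nabla u)\,dx\ \le\ 0 .
\]
For each fixed $x$ one has $p(x)>1$, so the elementary monotonicity inequality for $\xi\mapsto|\xi|^{\p-2}\xi$ --- the gradient of the strictly convex map $\xi\mapsto\tfrac1{\p}|\xi|^{\p}$ --- shows the integrand above is nonnegative, and strictly positive wherever $\nabla v\neq\nabla u$. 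Since the integral is $\le0$, the integrand vanishes a.e., hence $\nabla v=\nabla u$ a.e.\ on $\{v>u\}$, i.e.\ $\nabla\phi=0$ a.e.\ in $\Om$.

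Finally, $\phi\in W^{1,\px}_{0}(\Om)$ has vanishing gradient, so by the Poincar\'e inequality in $W^{1,\px}_{0}(\Om)$ (equivalently, because $0$ is the only constant lying in $W^{1,\px}_{0}(\Om)$) we get $\phi=0$ a.e., that is $(v-u)^{+}=0$ a.e., and therefore $u\ge v$ a.e.\ in $\Om$. The strict inequality stated in the lemma then follows from the strict gap at $\bdry\Om$ combined with the strong minimum principle (and the local $C^{1,\alpha}$-regularity of $\px$-harmonic functions) applied near a hypothetical interior contact point; as only the inequality $u\ge v$ is invoked later, I will not elaborate on this point.
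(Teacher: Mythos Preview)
The paper does not supply its own proof of this lemma; it merely cites Lemma~3.5 in \cite{HHKLM} and moves on. Your argument is the standard one --- test the sub- and supersolution inequalities against $(v-u)^{+}$, subtract, and use the strict monotonicity of $\xi\mapsto|\xi|^{p(x)-2}\xi$ --- and this is precisely the approach taken in the cited reference, so there is nothing to compare.

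Two small comments. First, your discussion of admissibility is honest: the passage from test functions in $C_0^\infty(\Om)$ to $(v-u)^+\in W^{1,\px}_0(\Om)$ when $u,v$ are only in $W^{1,\px}_{\loc}(\Om)$ does require an approximation/truncation argument, and you are right to defer to \cite{HHKLM} for the details. Second, your sketch for upgrading $u\ge v$ to the strict inequality $u>v$ is not quite right as stated: since the equation is nonlinear, $u-v$ is not itself a supersolution, so one cannot directly apply a strong minimum principle to it at a contact point. Obtaining the strict conclusion requires a more careful argument (e.g.\ a Hopf-type lemma or a tangency principle for the specific operator). As you note, only $u\ge v$ is actually used downstream in the paper (the comparison principle is always invoked with continuous barriers having a strict gap), so this does not affect anything that follows.
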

\noindent
By the standard reasoning the comparison principle implies the following maximum principle: \emph{If $u\in W^{1,\px}(\Om)\cap C(\overline{\Om})$ is a $\px$-subsolution in $\Om$, then the maximum of $u$ is attained at the boundary of $\Om$}.
For further discussion on comparison principles in the variable exponent setting we refer e.g. to Section 3 in Adamowicz--Bj\"orn--Bj\"orn~\cite{ABB13}.

We close our discussion of basic definitions and results with a presentation of the geometric concepts used in the paper.
\begin{defn}\label{def:uniform}
A domain $\Om\subset \R^n$ is called a \emph{uniform domain} if there exists a
constant $M_\Om \ge 1$, called a \emph{uniform constant}, such that whenever $x,y\in\Om$ there is a
rectifiable curve $\gamma:[0,l(\gamma)]\to\Om$, parameterized by arc length, connecting $x$ to $y$ and
satisfying the following two conditions:
\[
      l(\gamma) \le M_\Om |x-y|,
\]
and
\[
    \min\{|x-z|,|y-z| \}\le M_\Om  d(z, \partial \Om)
    \quad \text{for each point}\, z\in \gamma.
\]
\end{defn}
\begin{defn}\label{def:NTA}
A uniform domain $\Om\subset \R^n$ with constant $M_{\Omega}$ is called a
\emph{non-tangentially accessible (NTA) domain} if $\Om$ and its complement $\R^n\setminus \Om$ satisfy, additionally, the so-called \emph{corkscrew condition}:

For some $r_{\Om} > 0$ and for any $w \in \partial{\Omega}$ and $r \in (0, r_\Om)$, there exists a point $a_r(w) \in \Omega$ such that
\begin{equation*}
\frac{r}M_{\Om}< | a_r(w) - w| < r \qquad \mbox{and} \qquad d\bigl(a_r(w), \partial{\Omega}\bigr) > \frac{r}M_{\Om}.
\end{equation*}
\end{defn}
We note that in fact the (interior) corkscrew condition is implied by a uniform domain,
see Bennewitz--Lewis \cite{BL} and Gehring \cite{G87}.
Among examples of NTA domains we mention quasidisks,
bounded Lipschitz domains and domains with fractal boundary such as the von Koch snowflake.
A domain with the internal power-type cusp is an example of a uniform domain which fails to be NTA-domain.
Uniform domains are necessarily John domains, the latter one enclosing e.g. bounded domain satisfying the interior cone condition. See N\"akki--V\"ais\"al\"a~\cite{nv91} and V\"ais\"al\"a~\cite{vaisala88} for further information on uniform and John domains.

Recall, that a \emph{quasihyperbolic distance} $k_{\Om}$ between points $x,y$ in a domain $\Om \subsetneq \R^n$ is defined as follows
\begin{equation}\label{eq:quasi}
 k_{\Om}(x,y)=\inf_{\gamma}\int_{\gamma} \frac{ds(t)}{d(\gamma(t), \partial \Om)},
\end{equation}
where the infimum is taken over all rectifiable curves $\gamma$ joining $x$ and $y$ in $\Om$. Any two points in a uniform domain $\Om$ can always be join by at least one \emph{quasihyperbolic geodesic}, i.e. a curve for which the above infimum can be achieved. See Bonk-Heinonen-Koskela~\cite[Section 2]{bhk} and Gehring-Osgood~\cite{go} for more information.

We end this section by recalling the following geometric definition.
\begin{defn}\label{def:ball}
A domain $\Om\subset \R^n$ is said to satisfy the \emph{interior ball condition}
with radius $r_i > 0$ if
for every $w \in \partial \Omega$ there exists $\eta^i \in \Omega$
such that $B(\eta^i, r_i) \subset \Omega$ and $\partial B(\eta^i, r_i) \cap \partial \Omega = \{w\}$.
Similarly, a domain $\Om\subset \R^n$ is said to satisfy the \emph{exterior ball condition}
with radius $r_e > 0$ if
for every $w \in \partial \Omega$ there exists $\eta^e \in \R^n \setminus \Omega$
such that $B(\eta^e, r_e) \subset \R^n \setminus \Omega$ and $\partial B(\eta^e, r_e) \cap \partial \Omega = \{w\}$.
A domain $\Om\subset \R^n$ is said to satisfy the \emph{ball condition} with radius $r_b$
if it satisfies both the interior ball condition and the exterior ball conditions with radius $r_b$.
\end{defn}
It is well known that $\Om\subset \R^n$ satisfies the ball condition if and only if $\Om$ is a $C^{1,1}$-domain.
See Aikawa--Kilpel\"ainen--Shanmugalingam--Zhong \cite[Lemma 2.2]{AKSZ07} for a proof.
We also note that if $\Om\subset \R^n$ satisfies the ball condition then $\Om$ is a NTA-domain and hence also a uniform domain.

 Throughout the paper, unless otherwise stated, $c$ and $C$ will denote constants whose values may vary at each occurrence. If $c$ depends on the parameters $a_1, \dots, a_n$ we sometimes write $c(a_1, \dots, a_n)$.
When constants depend on the variable exponent $\px$ we write "depending on $p^-, p^+, c_{log}$"
in place of "depending on $p$" whenever dependence on $p$ easily reduces to $p^-, p^+, c_{log}$.


\section{Oscillation and Carleson estimates for $\px$-harmonic functions}\label{Sect-Carleson}

\setcounter{theorem}{0}
\setcounter{equation}{0}

 This section is devoted to discussing some important auxiliary results used throughout the rest of the paper. Namely, in Lemmas~\ref{osc-lem}, \ref{osc-Lukk} and~\ref{holderkont} we study oscillations of $\px$-harmonic functions over the balls intersecting the boundary of the underlying domain. We also employ geometric concepts such as NTA and uniform domains, quasihyperbolic geodesics and distance together with the Harnack inequality to obtain a supremum estimate for a $\px$-harmonic function over a chain of balls. Such estimates, discussed in $p=const$ setting for instance in Aikawa-Shanmugalingam~\cite{ASh} or Holopainen--Shanmugalingam--Tyson~\cite{HShT}, require extra attention for variable exponent $\px$ as now constant in the Harnack inequality depends on a $\px$-harmonic function and the inequality is non-homogeneous. In Theorem~\ref{Carleson} we show the main result of this section, namely the variable exponent Carleson estimate. Such estimates play crucial role in studies of positive $p$-harmonic functions, see e.g. Aikawa-Shanmugalingam~\cite{ASh}, also Garofalo~\cite{ga} for an application of Carleson estimates for a class of parabolic equations. According to our best knowledge Carleson estimates in the setting of equations with nonstandard growth have not been known so far in the literature. We apply Lemma~\ref{Carleson} in the studies of $\px$-harmonic measures in Section 5. Moreover, the geometry of the underlying domain turns out to be important in our investigations, in particular properties of NTA domains and uniform $\px$-fatness of the complement come into play.

We begin with recalling the Harnack estimate for $\px$-harmonic functions.

\begin{lemma}[Variable exponent Harnack inequality]\label{harnack}
Let $p$ be a bounded log-H\"older continuous variable exponent. Assume that $u$ is a nonnegative $\px$-harmonic
function in $B(w,4r)$, for some $w \in \R^n$ and $0 < r <\infty$.
Then there exists a constant $c_H$, depending on $n, p$ and $\sup_{\Om\cap
B(w,4r)} u$, such that
\begin{align*}
\sup_{B(w, r)} u \leq c_H\,\big( \inf_{B(w, r)} u + r \big).
\end{align*}
\end{lemma}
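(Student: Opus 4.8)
The plan is to reduce the statement to a known Harnack-type principle for solutions of nonstandard growth equations (e.g. the one in Harjulehto--H\"ast\"o--Koskenoja--Lukkari--Marola~\cite{HHKLM} or in \cite{DHHR}), where the subtlety is that the Harnack constant is genuinely nonhomogeneous: it depends on the size of the solution. First I would recall that $u$ being $\px$-harmonic in $B(w,4r)$ is locally bounded (indeed $C^{1,\alpha}$), so $M:=\sup_{B(w,4r)}u<\infty$, and this is the quantity appearing in the constant. The standard weak Harnack inequality for nonnegative $\px$-supersolutions, combined with the local boundedness estimate for $\px$-subsolutions (both proved via Moser/De Giorgi iteration adapted to the Musielak--Orlicz setting), yields an inequality of the form
\[
\sup_{B(w,r)} u \;\le\; c\Bigl(\inf_{B(w,r)} u + r\Bigr),
\]
where the additive term $r$ and the dependence of $c$ on $M$ both come from the scaling defect of equation~\eqref{eq:intro}: unlike the constant exponent case, $u$ is not invariant under multiplication by constants, so the iteration produces lower-order contributions that one absorbs into the ``$+\,r$'' term after normalizing lengths.

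The key steps, in order, would be: (i) fix the ball $B(w,4r)$ and set $M=\sup_{B(w,4r)}u$, noting $M$ is finite by interior regularity of $\px$-harmonic functions; (ii) rescale to a unit ball by the change of variables $x\mapsto w+4rx$, which transforms $u$ into a $\px$-harmonic function (with a rescaled exponent still satisfying the log-H\"older bound with the same $c_{\log}$, by~\eqref{logH-ball2}) on $B(0,1)$, and track how the constants transform --- this is where the factor $r$ in ``$\inf u + r$'' is produced; (iii) apply the interior Harnack inequality for nonnegative solutions of the $\px$-Laplace equation on concentric balls (the quantitative statement with explicit dependence on $\|u\|_\infty$, $n$, $p^\pm$ and $c_{\log}$); (iv) undo the scaling to recover the stated form on $B(w,r)$, checking that the dependence of $c_H$ collapses to $n$, $p$ and $M$ as claimed. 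Throughout one uses the equivalences~\eqref{logH-ball}--\eqref{logH-ball2} to replace pointwise values of $p$ by averaged ones and to control $r^{p(x)}$ uniformly.

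The main obstacle is item (iii): establishing (or citing precisely) the Harnack inequality with \emph{controlled, explicit} dependence of the constant on $\sup u$. In the constant exponent case homogeneity kills this dependence, but here one must carefully carry the $L^\infty$-bound through the Caccioppoli estimates and the iteration, since terms like $|\nabla u|^{p(x)-2}$ interact with the variation of the exponent and produce factors that are only controllable once $\|u\|_\infty$ is fixed. I expect the cleanest route is simply to invoke the Harnack principle as already formulated in \cite{HHKLM} or \cite[Ch.~11]{bb} --- stated there for balls with the dilation structure built in --- and then to perform the elementary rescaling bookkeeping of steps (ii) and (iv); the additive $r$ and the $M$-dependence are then immediate consequences of that bookkeeping rather than something to be proved from scratch.
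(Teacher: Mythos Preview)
The paper does not actually prove this lemma: immediately after the statement it gives Remark~\ref{Remark_2_5}, attributing the result to Alkhutov~\cite{alk1} (and \cite{alk2}) with the sharper version in Harjulehto--Kinnunen--Lukkari~\cite[Theorem~3.9]{HKL07}, and moves on. So your final suggestion --- just invoke the known Harnack principle from the literature --- is exactly what the paper does, and in that sense your proposal is aligned with it.

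Two points of friction, though. First, your proposed references are off: \cite{HHKLM} is about the obstacle problem and the comparison principle, not Harnack, and \cite{bb} treats constant-exponent nonlinear potential theory on metric spaces; the correct sources here are \cite{alk1,alk2,HKL07}. Second, the rescaling scheme in your steps (ii)--(iv) is not how the cited proofs proceed, and for good reason: under $x\mapsto w+4rx$ the equation does \emph{not} transform into another $\px$-Laplace equation in a way that lets you reduce to a fixed unit-ball statement --- the exponent becomes $\tilde p(\cdot)=p(w+4r\,\cdot)$, the log-H\"older constant is \emph{not} invariant, and the lower-order ``$+r$'' term is not produced by undoing a scaling but arises intrinsically in the Moser/De Giorgi iteration from the nonhomogeneity of the equation. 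The actual arguments in \cite{alk1,HKL07} work directly on $B(w,4r)$ and carry the $L^\infty$ (or $L^{q's}$) bound of $u$ through the Caccioppoli and iteration estimates, which is where both the $\sup u$-dependence of $c_H$ and the additive $r$ come from.
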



\begin{rem}\label{Remark_2_5}
The variable Harnack inequality in the above form was proved by
Alkhutov~\cite{alk1} (see also
Alkhutov--Krasheninnikova~\cite{alk2}) and subsequently improved to
embrace the case of unbounded solutions by
Harjulehto--Kinnunen--Lukkari~\cite[Theorem 3.9]{HKL07}. There,
$c_H$ depends only on $n, p$ and the $L^{q's}(B(w,4r))$-norm of
$u$ for $1<q<\frac{n}{n-1}$ and $s > p^+_{B(w,4r)} - p^-_{B(w,4r)}$.
\end{rem}


In what follows we will often iterate the Harnack inequality and
therefore we need to carefully estimate the growth of constants
involved in such iterations. Let $\Om\subset \R^n$ be a uniform domain with constant $M_{\Om}$ (for the definition of uniform domains and related concepts see the discussion in the end of Section~\ref{sect:prelim}).
We follow the argument in the proof of Lemma~3.9 in
Holopainen--Shanmugalingam--Tyson~\cite{HShT} and note that a
quasihyperbolic geodesic joining two points in $\Omega$ is
an $M'$-uniform curve with $M'$ depending only on $M_{\Om}$, cf. discussion
in Gehring--Osgood~\cite{go}. Let now $x$ and $y$ be given points in $B(w, \frac{r}{M'})\cap \Om$
for $w \in \bdry \Om$ and some fixed $r>0$. As in \cite{HShT} we
find a sequence of balls $B_i$, $i=1,\ldots, N$ covering
quasihyperbolic geodesic $\gamma$ joining $x$ and $y$ in $\Om$ (such
a geodesic always exists for points in uniform domains, see
discussion preceding the proof of \cite[Lemma~3.9]{HShT}) and
satisfying the following conditions (recall that $k_{\Om}(x,y)$
stands for a quasihyperbolic distance between points $x$ and $y$ and is given in \eqref{eq:quasi}):
\begin{itemize}
\item[(1)] $B_i\cap B_{i+1}\not = \emptyset$ for each $i$,
\item[(2)] $2B_i\subset B(w, 4r)\cap \Omega$,
\item[(3)] $N\leq 3k_{\Om}(x,y)$.
\end{itemize}

We estimate the quasihyperbolic distance $k_{\Om}(x,y)$ similarly as in formula (16) in
Aikawa--Shanmugalingam~\cite[Section 4]{ASh}.
Among other facts we employ the definition of John curve.
Assume that $d(x,\bdry \Om) \leq d(y,\bdry \Om)$ and note that then for a John curve $\gamma$, parametrized by arc-length so that $\gamma(0) = x$ and $\gamma(l(\gamma)) = y$, the following is true.
For all $z \in \gamma$ we have $ M_{\Om} d(z,\bdry \Om) \geq l(\gamma_{xz})$,
where $\gamma_{xz}$ is the sub curve from $x$ to $z$.
Using this we see that
\begin{align*}
 k_{\Om}(x,y)&\leq \int_{\gamma}\frac{ds(t)}{d(\gamma(t),\bdry \Om)}\leq \int_{0}^{\frac12 d(x,\bdry \Om)}\frac{ds}{\frac12 d(x,\bdry \Om)}+ \int_{\frac12 d(x,\bdry \Om)}^{l(\gamma)}\frac{ds}{ d(\gamma(t),\bdry \Om)} \\
&\leq  1+ M_{\Om}\int_{\frac12 d(x,\bdry \Om)}^{l(\gamma)}\frac{ds}{s}
=1+M_{\Om}\log s\,\rvert_{_{\frac12 d(x,\bdry \Om)}}^{^{l(\gamma)}}\leq
1+M_{\Om}\log s\,\rvert_{_{\frac12 d(x,\bdry \Om)}}^{^{M_{\Om}d(y,\bdry \Om)}} \\
 &= 1+M_\Om^2 +M_\Om\log 2 +M_\Om\log\left(\frac{d(y,\bdry \Om)}{d(x,\bdry \Om)}\right).
\end{align*}
Combining this with the estimate for the number of balls $N$ we get
\begin{equation}\label{N-est}
 N\leq 9M_{\Om}^2 +3M_\Om \log\left(\frac{d(y,\bdry \Om)}{d(x,\bdry \Om)}\right),
\end{equation}
whenever  $d(x,\bdry \Om) \leq d(y,\bdry \Om)$.
This estimate can be used in the iteration of Harnack inequality as follows.

Suppose that $x,y\in B(w, \frac{r}{M'})$. Then by the variable exponent Harnack inequality (Lemma \ref{harnack}) and the construction of the chain of balls $B_i$ above, we have that
\begin{align*}
 u(x)&\leq \sup u(x)_{B_1(x, r_1)}\leq c_H\,\big( \inf_{B_1} u + r_1 \big)\leq \ldots \leq \nonumber \\
&\leq c_H^N u(y)+c_H^N r_1+c_H^{N-1}r_2+\ldots+r_N\leq c_H^N u(y)+c_H^N Nr \nonumber \\
&\leq C^N \big( u(y)+r).
\end{align*}
By using (\ref{N-est}) we find that
\begin{align}\label{ineq:N-indep}
C^N\leq C^{9M_{\Om}^2 +3M_\Om \log\left(\frac{d(y,\bdry \Om)}{d(x,\bdry \Om)}\right)}\leq
C^{9M_{\Om}^2}C^{\log\left(\frac{d(y,\bdry \Om)}{d(x,\bdry \Om)}\right)^{3M_\Om}}\leq
C^{9M_{\Om}^2}\left(\frac{d(y,\bdry \Om)}{d(x,\bdry \Om)}\right)^{3M_\Om\log C},
\end{align}
whenever  $d(x,\bdry \Om) \leq d(y,\bdry \Om)$.


In some results of this section we appeal to notion of \emph{uniform $\px$-fatness}.
For the sake of completeness of the presentation we recall necessary definitions,
cf. Lukkari~\cite[Sections 3 and 4]{L10} and Holopainen--Shanmugalingam--Tyson~\cite[Section 3]{HShT} .

\begin{defn}
We say that $\Om$ has \emph{uniformly $\px$-fat complement},
if there exist a radius $r_0>0$ and a constant $c_0>0$ such that
\begin{equation}\label{ineq:fatness}
 \capp_{\px} ((\R^n\setminus \Om) \cap B(x, r), B(x, 2r)) \geq c_0\,\capp_{\px}(\overline{B}(x, r), B(x, 2r))
\end{equation}
for all $x \in \R^n\setminus \Om$ and all $r\leq r_0$.
\end{defn}

The next lemma provides an oscillation estimate. Similar result was proven by  Lukkari in \cite[Proposition 4.2]{L10}.
However, here we adapt the discussion from \cite{L10} to our case, for instance we do not require the boundary data to be H\"older continuous.

\begin{lemma}\label{osc-lem}
Let $\Omega \subset \Rn$  be a domain having a uniformly $\px$-fat complement with constants $c_0$ and $r_0$.
Let further $p$ be a bounded log-H\"older continuous
variable exponent satisfying either $p^+ \leq n$ or $p^- > n$.
Suppose that $w\in \partial \Om$, $r>0$ and
$u$ is a $\px$-harmonic function in $\Om \cap B(w, r)$,
continuous on $\overline{\Om} \cap \overline{B}(w, r)$ with $u = 0$ on $\partial\Om \cap B(w, r)$.
Then there exist $\beta$, $0<\beta\leq 1$, a constant $c>0$ and a radius $\hat r$ such that
\begin{equation*}
\sup_{B(w, \rho)\cap \Om} u \leq c \bigg(\frac{\rho}{r}\bigg)^{\beta}\big(\sup_{B(w, r)\cap \Om} u+r\big)
\end{equation*}
for all $\rho\leq r/2$ and $r \leq \hat r$.
The constants $\beta$ and $c$ depend on $n, p, \sup_{B(w, r)\cap \Om} u$ and $c_0$,
while $\hat r$ depends on $n, p^-, p^+, c_{log}$ and $r_0$.
\end{lemma}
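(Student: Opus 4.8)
The plan is to derive the oscillation decay from uniform $\px$-fatness of the complement via a standard Caccioppoli-plus-capacity iteration, following Lukkari~\cite{L10} but circumventing the use of H\"older continuity of the boundary data. Fix $w\in\bdry\Om$ and set $M(s):=\sup_{B(w,s)\cap\Om}u$ for $0<s\le r$. Since $u=0$ on $\bdry\Om\cap B(w,r)$, the function $u$ may be extended by zero to all of $B(w,r)$, and this extension is a $\px$-subsolution there (by the standard zero-extension lemma across a boundary on which $u$ vanishes continuously). The first step is therefore to set up a De Giorgi-type estimate: for the truncated, shifted function $v_s:=(u-M(s/2)/2)_+$ (or a similar level-set function adapted to the scale $s$), a Caccioppoli inequality on the annulus $B(w,s)\setminus B(w,s/2)$ gives control of $\int|\nabla v_s|^{\p}$ by $s^{-\p}\int_{B(w,s)}v_s^{\p}$, where one uses \eqref{logH-ball2} to treat the variable exponent powers of $s$ as a single power $s^{-p(w)}$ up to a constant. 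The term $r$ appearing in the conclusion is the usual inhomogeneity artifact: because \eqref{eq:intro} is not scale-invariant, the Caccioppoli and capacity estimates carry additive error terms of the form (a power of) $s$, which one absorbs by working with $M(s)+s$ rather than $M(s)$ throughout.

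The second and central step is to convert fatness into a measure–capacity lower bound. On the set $E_s:=\{u=0\}\cap B(w,s/2)\supset(\R^n\setminus\Om)\cap B(w,s/2)$, the uniform $\px$-fatness hypothesis \eqref{ineq:fatness} together with the capacity estimate \eqref{cap-est} gives $\capp_{\px}(E_s,B(w,s))\ge c_0\,\capp_{\px}(\overline B(w,s/2),B(w,s))\ge c\,s^{n-p(w)}$ (using \eqref{cap-est} in the case $p^+\le n$; in the case $p^->n$ one argues instead that points are not removable, so that the vanishing of $u$ on a boundary piece still forces decay — this is the reason the hypothesis splits into $p^+\le n$ or $p^->n$ and excludes the delicate borderline). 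Since $v_s$ vanishes on a set of definite relative capacity, a Sobolev–Poincar\'e/capacitary inequality of Maz'ya type yields $\vint_{B(w,s)}v_s^{\p}\,dx\le C\,\frac{s^{\p}}{\capp_{\px}(E_s,B(w,s))}\vint_{B(w,s)}|\nabla v_s|^{\p}\,dx$, and the fatness bound turns the capacity factor into a dimensional constant independent of $s$. Chaining this with the Caccioppoli inequality from the first step produces the key recursion
\[
M(s/4)+s \le \theta\,\bigl(M(s)+s\bigr)
\]
for some $\theta=\theta(n,p,M(r),c_0)\in(0,1)$ and all $s\le\hat r$, where $\hat r$ depends only on $n,p^-,p^+,c_{\log},r_0$ (it must be small enough that $s\le r_0$ and that the $\log$-H\"older equivalences \eqref{logH-ball}–\eqref{logH-ball2} hold with uniform constants on $B(w,s)$). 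The dependence of $\theta$ on $M(r)=\sup_{B(w,r)\cap\Om}u$ enters through the Harnack constant $c_H$ in Lemma~\ref{harnack} (used to compare $\sup$ and $\inf$ on intermediate balls) and is unavoidable for the variable-exponent equation, exactly as the paper emphasizes.

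The final step is the routine iteration: applying the recursion $\approx\log_4(r/\rho)$ times gives $M(\rho)+\rho\le\theta^{\lfloor\log_4(r/\rho)\rfloor}(M(r)+r)\le C(\rho/r)^{\beta}(M(r)+r)$ with $\beta:=-\log_4\theta>0$, and since $\rho\le M(\rho)+\rho$ and $M(r)+r\le M(r)+r$, this is the claimed estimate (after renaming constants, and noting that for $\rho\in(r/4,r/2]$ the inequality holds trivially by enlarging $c$). I expect the main obstacle to be the second step, specifically making the capacitary Poincar\'e inequality and the fatness bound interact cleanly in the variable-exponent setting: one must be careful that all the constants coming from \eqref{cap-est}, from the Maz'ya-type inequality, and from the $\log$-H\"older equivalences depend only on the advertised quantities and, crucially, do not themselves depend on the scale $s$ — this is precisely where the uniformity of the $\log$-H\"older constant on small balls and the choice of $\hat r$ are used. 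The case distinction $p^+\le n$ versus $p^->n$ is the other point requiring care, since the relevant capacity estimate \eqref{cap-est} is stated for $p:B(x,2r)\to(1,n)$ and a separate (easier, supersolution/continuity-based) argument is needed when $p^->n$.
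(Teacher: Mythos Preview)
Your plan is essentially the same as the paper's: both derive the decay recursion from uniform $\px$-fatness via a capacity lower bound and then iterate. The paper is terser --- for the case $p(w)\le n$ it directly quotes Lukkari's Formula~(3.4) in \cite{L10} (which already packages the Caccioppoli-plus-capacitary-Poincar\'e mechanism you spell out) and simply tracks the constants through the recursion $\sup_{B(w,r/2)\cap\Om}u\le\gamma_1(\sup_{B(w,r)\cap\Om}u+r)$ with $\gamma_1\in[\tfrac12,1)$. Your more detailed outline of the De~Giorgi machinery behind that formula is fine.

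Two points to correct. First, your attribution of the $\sup u$ dependence is wrong: Harnack's inequality (Lemma~\ref{harnack}) is \emph{not} used anywhere in this proof. The dependence of $\theta$ (equivalently $\gamma_1$) on $\sup_{B(w,r)\cap\Om}u$ comes instead from the factor
\[
(\sup_{B(w,r)\cap\Om}u+1)^{p^+-p^-}
\]
that appears when you normalize the variable-exponent Caccioppoli/capacity estimate using \eqref{logH-ball2} --- this is the constant $C$ in the paper's displayed line following \eqref{osc-lem-est1}. Drop the reference to $c_H$ and to ``comparing $\sup$ and $\inf$ on intermediate balls''; the iteration you describe is self-contained without it.

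Second, for the case $p^->n$ the paper does not argue via non-removability of points. It invokes Theorem~1.2 of Alkhutov--Krasheninnikova~\cite{alk2} directly, which yields boundary H\"older continuity with the explicit exponent $\beta=1-n/p(w)$ when $p(w)>n$; no fatness or capacity argument is needed in that regime. Your sketch ``points are not removable, so vanishing forces decay'' is the right intuition but is not how either the paper or \cite{alk2} actually proceeds.
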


\begin{proof}
Denote $p_0:= p(w)$ and split the discussion into two cases: $p_0>n$ and $p_0 \leq n$.
We start by proving the lemma for $p_0 > n$.
By assumptions $u$ is continuous on $\overline{B}(w, r) \cap \overline{\Om}$ with $u = 0$ on $B(w, r) \cap \partial\Omega$. Hence, we may use Theorem 1.2 in Alkhutov--Krasheninnikova~\cite{alk2},
with $D = B(w, r)\cap \Omega$ and $f=u$. In a consequence $f(w) = 0$ and ${\rm osc}_{\partial D} f \leq \sup_{B(w, r)\cap \Omega} u$ and we obtain that there exists $c = c(n, p, \sup_{B(w, r)\cap \Om} u)$
such that
\begin{align}
\sup_{B(w,\rho)\cap \Om} u \leq c \left( \frac{\rho}{r} \right)^{1-n/p_0}  \sup_{B(w, r)\cap \Om} u,
\end{align}
for all $\rho \leq r/4$ and with $r \leq \hat r(n, p^+, p^-,c_{log},r_0)$.
The dependence of $\hat r$ on the listed parameters follows from the proof of Theorem 1.2 in \cite{alk2}.
Hence, we conclude the lemma for $p_0>n$ by taking $\beta = \beta(p_0,n) = 1-n/p_0$.

Assume now that $p_0 \leq n$.
To prove the lemma in this case we will follow the steps and notation of the proof of Proposition 4.2 in Lukkari \cite{L10}. In the applications of Lemma~\ref{osc-lem} we will need to understand the exact dependence on constants and, therefore we repeat parts of the proof from \cite{L10}.

Let $\eta\in C_{0}^{\infty}(B(w, r))$. Then $\eta u_{+}\in W^{1,\px}_{0}(B(w, r) \cap \Om)$. Further, $\sup_{B(w, r)\cap \Om} u_{+}= \sup_{B(w, r)\cap \Om} u$ as $u$ attains the boundary values $u\equiv 0$ continuously on $B(w, r)\cap \partial \Om$. As in Lukkari's proof we define $\phi(r):=\sup_{B(w,r)\cap \partial \Om}u-u(w)$ and $\lambda(r):=\sup_{B(w,r)\cap \partial \Om}u$ and note that under our assumptions $\phi \equiv \lambda \equiv 0$.
Then we use \cite[Formula (3.4)]{L10} and \cite[Formula (4.2)]{L10} which requires $r \leq \hat r(n, p^-,p^+)$,
cf. Formula (3.2) in \cite{L10}.
Namely, \cite[Formula (3.4)]{L10} in our case reads
\begin{equation}\label{osc-lem-est1}
(\sup_{B(w, r)\cap \Om} u+r)C^{-1}\gamma(r)\leq \sup_{B(w, r)\cap \Om} u-\sup_{B(w, r/2)\cap \Om} u+r.
\end{equation}
The analysis of the proof of \cite[Formula (3.4)]{L10} and the proof of \cite[Theorem 3.3]{L10} reveals that
\begin{equation*}\label{osc-lem-est2}
(\sup_{B(w, r)\cap \Om} u+r)^{p(w)-p(x)}\leq c(c_{log})(\sup_{B(w, r)\cap \Om} u+1)^{p^+-p^-}:=C.
\end{equation*}
The $\px$-fatness of the complement of $\Om$ together with the capacity estimate \eqref{cap-est} imply the following inequalities (cf. \cite[Formula (3.5)]{L10} and \cite[Formula (4.2)]{L10}):
\begin{align*}
\gamma(r):=\left(\frac{\capp_{\px}((\R^n\setminus \Om)\cap B(w, r/2), B(w, r))}{r^{n-p(w)}}\right)^{\frac{1}{p(w)-1}}&\geq
\left(\frac{c_0\,\capp_{\px}(\overline{B}(w, r/2), B(w, r))}{r^{n-p(w)}}\right)^{\frac{1}{p(w)-1}} \nonumber \\
&\geq (c_0 c(n,p))^{\frac{1}{p(w)-1}}.
\end{align*}
Thus $\gamma_0:=C^{-1}\gamma(r)$ satisfies $c(c_0, n, p, \|u^+\|_{L^{\infty}(B(w, r)\cap \Om)})<\gamma_0<1$ and \eqref{osc-lem-est1} reads:
\begin{equation*}
\sup_{B(w, r/2)\cap \Om} u\leq \gamma_1(\sup_{B(w, r)\cap \Om} u+r),
\end{equation*}
where $\gamma_1:=\max\{\gamma_0,1-\gamma_0\}<1$. This inequality is a counterpart of \cite[Formula (4.3)]{L10}.
Note also that $\frac12\leq \gamma_1 <1$. We iterate the above inequality to obtain
\[
  \sup_{B(w, \frac{r}{2^m})\cap \Om} u \leq \gamma_1^m\big( \sup_{B(w, r)\cap \Om} u+c(\gamma_1)r\big),
\]
where $c(\gamma_1)<1$ if $\gamma_1=\frac12$ and $c(\gamma_1)\leq \frac{2\gamma_1^{m+1}}{2\gamma_1-1}$ for the remaining values of $\gamma_1\in (\frac12,1)$. We continue as in \cite{L10} to find that for $\beta=\log_{2}(\frac{1}{\gamma_1})$ it holds
\[
\gamma_1^m\leq 2^\beta \bigg(\frac{\rho}{r}\bigg)^{\beta},
\]
where $\beta$ depends on $c_0, n, p$ and $\sup_{B(w, r)\cap \Om} u$.
Hence, the proof is completed.
\end{proof}

\noindent To prove H\"older continuity up to the boundary we will also use the following oscillation estimate
which follows from Theorem 4.2, Lemma 2.8 in Fan-Zhao~\cite{Fan2}
and Lemma 4.8 in Ladyzhenskaya-Ural'tseva~\cite{lad-ur}.
The careful scrutiny of the presentation in \cite{Fan2} reveals the dependance of $c$ and
$\kappa$ on $\sup_{\Om} u$ and structure constants (cf. lemma below).
A similar result is given by Theorem 2.2 in Lukkari \cite{L10}, but under the assumption that $p^+ \leq n$.

\begin{lemma}
\label{osc-Lukk}
Let $p$ be a bounded log-H\"older continuous variable exponent and
let $u$ be a $\px$-harmonic function in $\Om$ and let $B(w,r)\Subset \Om$.
Then there exist $c$ and $\kappa$, $0 < \kappa < 1$, such that for all $0 < \rho \leq r$ it holds that
\begin{equation*}
{\rm osc}_{B(w,\rho)} u\leq c \bigg(\frac{\rho}{r}\bigg)^{\kappa}\big({\rm osc}_{B(w,r)} u + r\big).
\end{equation*}
The constants $c$ and $\kappa$ depend on $n, p^+, p^{-}$ and $\sup_{\Om} u$.
\end{lemma}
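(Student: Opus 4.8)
The plan is to obtain the interior oscillation estimate of Lemma~\ref{osc-Lukk} by combining two ingredients: first, the local boundedness and De Giorgi-type oscillation decay for $\px$-harmonic functions on compactly contained balls, which is the content of Theorem~4.2 together with Lemma~2.8 in Fan--Zhao~\cite{Fan2}; second, the classical iteration lemma of Ladyzhenskaya--Ural'tseva (Lemma~4.8 in \cite{lad-ur}) that upgrades a one-step geometric decay of the oscillation into a H\"older-type estimate on all scales $\rho\le r$. Since the paper already allows us to invoke these results, the proof is essentially a bookkeeping argument: we must run the Fan--Zhao machinery carefully enough to see precisely which quantities the constants depend on, and in particular to verify that the dependence reduces to $n$, $p^+$, $p^-$ and $\sup_\Om u$, as claimed.

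First I would fix $B(w,r)\Subset\Om$ and, using the local $C^{1,\alpha}$ (or at least local H\"older and local boundedness) regularity recalled after Definition~\ref{px-defn}, note that $u$ is bounded on a neighbourhood of $\overline{B}(w,r)$, so that $\sup_\Om u$ (assumed finite) controls the relevant sup-norms. Then I would apply the Fan--Zhao oscillation lemma on concentric balls: there exist $\theta\in(0,1)$ and a constant, depending only on the structure constants and $\sup u$, such that
\[
{\rm osc}_{B(w,r/2)}u \;\le\; \theta\,{\rm osc}_{B(w,r)}u \;+\; C r .
\]
The inhomogeneous term $Cr$ (equivalently $r$, after absorbing the constant) is exactly the nonhomogeneity forced by the variable exponent — it plays the same role as in Lemma~\ref{harnack} and Lemma~\ref{osc-lem} — and must be carried through the iteration. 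Tracking the dependence of $\theta$ and $C$ on $\sup_\Om u$ is precisely the point emphasised in the sentence preceding the lemma (``The careful scrutiny of the presentation in \cite{Fan2} reveals the dependance of $c$ and $\kappa$ on $\sup_\Om u$ and structure constants''); here the structure constants are $n$, $p^+$, $p^-$ (and $c_{\log}$, which for Lipschitz or log-H\"older exponents reduces to these, per the convention at the end of Section~\ref{sect:prelim}).

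Next I would iterate: applying the one-step estimate on the dyadic balls $B(w,r/2^m)$ gives, after summing the geometric tail produced by the $Cr$ terms,
\[
{\rm osc}_{B(w,r/2^m)}u \;\le\; \theta^{m}\,{\rm osc}_{B(w,r)}u \;+\; C' r,
\]
and then, for arbitrary $\rho\le r$, choosing $m$ with $r/2^{m+1}<\rho\le r/2^{m}$ and setting $\kappa:=\log_2(1/\theta)\in(0,1)$ converts $\theta^{m}$ into $2^{\kappa}(\rho/r)^{\kappa}$, exactly as in the final paragraph of the proof of Lemma~\ref{osc-lem}. Equivalently one invokes Lemma~4.8 in \cite{lad-ur} directly with the function $\omega(\rho):={\rm osc}_{B(w,\rho)}u$. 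Absorbing all constants into a single $c=c(n,p^+,p^-,\sup_\Om u)$ and writing the additive term as $c\,r$ (or folding it into $({\rm osc}_{B(w,r)}u+r)$) yields the stated inequality.

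The main obstacle is not any single estimate but the bookkeeping of constants: one must go through Fan--Zhao's proof to confirm that the oscillation-decay factor and the additive constant depend only on $n$, $p^\pm$ and $\sup_\Om u$ — and in particular that no hidden dependence on $\diam\Om$, on $\dist(B(w,r),\partial\Om)$, or on the solution beyond its supremum creeps in through the non-scalability of the $\px$-Laplacian. A secondary subtlety is the correct handling of the inhomogeneous $r$-term under iteration, ensuring the geometric series of the tail contributions converges with a constant of the claimed form; this is routine but must be done, since for $p\ne{\rm const}$ one cannot simply scale it away.
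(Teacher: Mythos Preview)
Your proposal is correct and follows precisely the route the paper itself indicates: the paper does not give an independent proof of this lemma but states that it ``follows from Theorem~4.2, Lemma~2.8 in Fan--Zhao~\cite{Fan2} and Lemma~4.8 in Ladyzhenskaya--Ural'tseva~\cite{lad-ur}'', which is exactly the Fan--Zhao one-step oscillation decay plus the standard iteration you describe. Your bookkeeping of the constants (dependence on $n$, $p^\pm$, $\sup_\Om u$) matches the paper's remark preceding the lemma.
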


We are now ready to formulate the version of H\"older continuity up to the boundary which will be needed in this paper.

\begin{lemma}\label{holderkont}

Let $\Omega \subset \Rn$  be a domain having a uniformly $\px$-fat complement with constants $c_0$ and $r_0$.
Let further $p$ be a bounded log-H\"older continuous
variable exponent.
Suppose that $w\in \partial \Om$, $r>0$ and
$u$ is a $\px$-harmonic function in $\Om \cap B(w, 2r)$,
continuous on $\overline{\Om} \cap \overline{B}(w, 2r)$ with $u = 0$ on $\partial\Om \cap B(w, 2r)$.
Let $\gamma=\min\{\kappa, \beta\}$ and $r < \hat r$ for $\beta$ and $\hat r$ as in Lemma~\ref{osc-lem}
and $\kappa$ as in Lemma~\ref{osc-Lukk}.
Then there exists $C>0$
such that
\begin{equation*}
|u(x)-u(y)|\leq C\bigg(\frac{|x-y|}{r}\bigg)^{\gamma}\bigg(\sup_{B(w, 2r)\cap \Om} u+r\bigg)
\quad \textrm{whenever} \quad x,y \in B(w, r)\cap \Omega.
\end{equation*}

\end{lemma}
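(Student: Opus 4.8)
The plan is to combine the two oscillation estimates already at our disposal, Lemma~\ref{osc-lem} (boundary oscillation decay, valid for $\rho \le r/2$, with exponent $\beta$) and Lemma~\ref{osc-Lukk} (interior oscillation decay, with exponent $\kappa$), and to interpolate between the ``close to the boundary'' regime and the ``deep interior'' regime. Fix $x, y \in B(w,r) \cap \Om$ and set $s := |x-y|$; we may clearly assume $s$ is small compared to $r$, as otherwise the inequality is trivial after adjusting $C$. Write $\delta := \min\{d(x,\bdry\Om), d(y,\bdry\Om)\}$, and WLOG say the minimum is attained at $x$, so $d(x,\bdry\Om)=\delta$. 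The dichotomy is: either $s < \delta/2$ (then $x$ and $y$ both lie in a ball on which $u$ is $\px$-harmonic and away from $\bdry\Om$, and we use the interior estimate Lemma~\ref{osc-Lukk}), or $s \ge \delta/2$ (then $x$ is comparably close to the boundary as its distance to $y$, and we use the boundary estimate Lemma~\ref{osc-lem} centered at a suitable boundary point).

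In the first case, $s < \delta/2$: pick $w_x \in \bdry\Om$ with $|x - w_x| = d(x,\bdry\Om) = \delta$; then $B(x, \delta/2) \subset \Om \cap B(w,2r)$ (using $x \in B(w,r)$ and $r$ small enough that $B(w,2r)$ contains the relevant balls — here one should note $\delta \le d(x,\bdry\Om) \le |x - w| + d(w,\bdry\Om)$ is bounded by roughly $r$, so $B(x,\delta) \subset B(w,2r)$), and $u$ is $\px$-harmonic on $B(x,\delta/2)$. Applying Lemma~\ref{osc-Lukk} with center $x$, radius $\delta/2$ and $\rho = s$ gives
\[
|u(x)-u(y)| \le {\rm osc}_{B(x,s)} u \le c\Big(\frac{2s}{\delta}\Big)^{\kappa}\big({\rm osc}_{B(x,\delta/2)} u + \delta\big).
\]
Now ${\rm osc}_{B(x,\delta/2)} u \le 2\sup_{B(w,2r)\cap\Om} u$ and $\delta \lesssim \sup_{B(w,r/2)\cap\Om} u + r$ by Lemma~\ref{osc-lem} applied at $w_x$ (since $u(w_x)=0$, we have $\delta \le$ roughly $d(x,\bdry\Om)$ and $u$ near $x$ is controlled... more precisely use $\delta \le 2r$ so the crude bound $\delta \le 2r$ suffices to absorb it). Hence $|u(x)-u(y)| \le c (s/\delta)^{\kappa}\big(\sup_{B(w,2r)\cap\Om} u + r\big)$; but now since $\gamma \le \kappa$ and $s/\delta \le 1$... we still have a $\delta^{-\kappa}$ which must be controlled — this is exactly where one invokes that in this case we also dominate $(s/\delta)^\kappa$ by $(s/r)^\gamma$ up to the boundary decay: apply Lemma~\ref{osc-lem} with $\rho \approx \delta$ to get $\sup_{B(w_x,\delta)\cap\Om} u \le c(\delta/r)^\beta(\sup_{B(w,r)\cap\Om} u + r)$, and since $x,y \in B(w_x, 2\delta)$ (as $s<\delta/2$), combine $|u(x)-u(y)| \le c(s/\delta)^\kappa \cdot (\delta/r)^\beta(\cdots) \le c (s/r)^\gamma(\cdots)$ because $(s/\delta)^\kappa (\delta/r)^\beta \le (s/\delta)^\gamma (\delta/r)^\gamma = (s/r)^\gamma$ using $s \le \delta$ and $\delta \le r$ together with $\gamma = \min\{\kappa,\beta\}$.

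In the second case, $s \ge \delta/2$: again take $w_x \in \bdry\Om$ nearest to $x$. Then both $x$ and $y$ lie in $B(w_x, \rho)$ with $\rho := 3s$ (since $|x - w_x| = \delta \le 2s$ and $|y - w_x| \le |y-x| + |x - w_x| \le s + 2s = 3s$), and $w_x \in \partial\Om \cap B(w,2r)$ provided $\rho \le r/2$, i.e.\ $s \le r/6$, which we may assume. Lemma~\ref{osc-lem} applied at $w_x$ then yields
\[
|u(x)-u(y)| \le 2\sup_{B(w_x,3s)\cap\Om} u \le c\Big(\frac{3s}{r}\Big)^{\beta}\big(\sup_{B(w_x,r)\cap\Om} u + r\big) \le C\Big(\frac{s}{r}\Big)^{\gamma}\big(\sup_{B(w,2r)\cap\Om} u + r\big),
\]
where in the last step $B(w_x, r) \subset B(w, 2r)$ (since $w_x \in B(w,r)$) and $\beta \ge \gamma$ with $s/r \le 1$. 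Combining the two cases proves the claim. The main obstacle is purely bookkeeping: making sure all auxiliary balls stay inside $B(w,2r)$ so that the $\px$-harmonicity and the hypotheses of Lemmas~\ref{osc-lem} and \ref{osc-Lukk} genuinely apply (this forces the smallness restriction on $|x-y|$, harmless since it can be absorbed in $C$), and, in the first case, correctly juggling the three exponents so that the spurious negative power of $\delta$ is killed by the boundary decay factor $(\delta/r)^\beta$ — this is the one spot where the choice $\gamma = \min\{\kappa,\beta\}$ is genuinely used rather than just $\gamma \le \kappa, \beta$.
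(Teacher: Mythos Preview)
Your argument is correct and follows essentially the same route as the paper's own proof: the same dichotomy on whether $|x-y|$ is smaller or larger than (half) the distance from $x$ to $\bdry\Om$, with Lemma~\ref{osc-Lukk} followed by Lemma~\ref{osc-lem} in the first case (combining the factors $(s/\delta)^{\kappa}(\delta/r)^{\beta}\le (s/r)^{\gamma}$ exactly as you do), and Lemma~\ref{osc-lem} alone in the second. The only cosmetic difference is that in Case~2 the paper splits $|u(x)-u(y)|\le |u(x)-u(x_0)|+|u(y)-u(x_0)|$ and applies Lemma~\ref{osc-lem} to each summand, whereas you enclose both $x$ and $y$ in a single ball $B(w_x,3s)$ and apply the lemma once; both are fine.
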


\begin{proof}
Let $x,y \in B(w, r)\cap \Om$ and let $x_0\in \partial \Om$ be such that $d(x, \partial \Om)=|x-x_0|$.
We distinguish two cases.

\emph{Case 1.} $|x-y|<\frac12 d(x,\partial \Om)$. Then Lemma~\ref{osc-Lukk} applied with $\rho=|x-y|$ and
$r=\tau/2$ for $\tau= d(x,\partial \Om)$ together with Lemma~\ref{osc-lem} imply the following inequalities:
\begin{align*}
|u(x)-u(y)|&\leq {\rm osc}_{B(x, \rho)}u \leq c \bigg(\frac{|x-y|}{\tau/2}\bigg)^{\kappa}\big({\rm osc}_{B(x, \tau/2)} u+\frac{\tau}{2}\big) \nonumber \\
&\leq c 2^{\kappa}\bigg(\frac{|x-y|}{\tau}\bigg)^{\kappa}\big({\rm osc}_{B(x_0,\frac{3}{2}\tau)\cap \Om} u+\frac{\tau}{2}\big) \nonumber \\
&\leq c 2^{\kappa} \bigg(\frac{|x-y|}{\tau}\bigg)^{\kappa}\big(\sup_{B(x_0,\frac{3}{2}\tau)\cap \Om} u+\frac{\tau}{2}\big) \\
&\leq c 2^{\kappa} \bigg(\frac{|x-y|}{\tau}\bigg)^{\kappa}\bigg[2^\beta\left(\frac{\frac{3}{2}\tau}{2r}\right)^\beta\big(\sup_{B(w, 2r)\cap \Om} u+2r\big)+\frac{\tau}{2}\bigg] \nonumber \\
&\leq c 3^\beta 2^{\kappa-\beta} \bigg(\frac{|x-y|}{\tau}\bigg)^{\kappa} \bigg(\frac{\tau}{r}\bigg)^{\beta}\bigg(\sup_{B(w, 2r)\cap \Om} u+2r+r^\beta \tau^{1-\beta}\frac12\bigg) \nonumber \\
&\leq C \bigg(\frac{|x-y|}{r}\bigg)^{\kappa} \bigg(\frac{\tau}{r}\bigg)^{\beta-\kappa}\bigg(\sup_{B(w, 2r)\cap \Om} u+2r\bigg). \nonumber
\end{align*}
If $\beta-\kappa>0$, then $\left(\frac{\tau}{r}\right)^{\beta-\kappa}<1$ and we get the assertion for $\gamma=\kappa$. Otherwise, if $\beta-\kappa\leq 0$, then since $|x-y|<\frac12\tau$, we have that
\[
\bigg(\frac{|x-y|}{r}\bigg)^{\kappa} \bigg(\frac{\tau}{r}\bigg)^{\beta-\kappa}< \bigg(\frac{|x-y|}{r}\bigg)^{\kappa}
\bigg(\frac{r}{2|x-y|}\bigg)^{\kappa-\beta}\leq 2^{\beta-\kappa} \bigg(\frac{|x-y|}{r}\bigg)^{\beta}.
\]
Thus, the estimate holds for $\gamma=\min\{\kappa, \beta\}$.

\emph{Case 2.} $|x-y|\geq \frac12 d(x,\partial \Om)$.   Since $u(x_0)=0$, we have by Lemma~\ref{osc-lem} that
\begin{align*}
|u(x)-u(y)|&\leq |u(x)-u(x_0)|+|u(y)-u(x_0)|\nonumber \\
&\leq 2^{\beta} \bigg(\frac{|x-x_0|}{r}\bigg)^{\beta}\big(\sup_{B(x_0, r)\cap \Om} u+r\big)
+ 2^{\beta} \bigg(\frac{|y-x_0|}{r}\bigg)^{\beta}\big(\sup_{B(x_0, r)\cap \Om} u+r\big) \\
&\leq 4^{\beta} \bigg(\frac{|x-y|}{r}\bigg)^{\beta}\bigg(\sup_{B(x_0, r)\cap \Om} u+r\bigg) \nonumber \\
&\leq 4^{\beta} \bigg(\frac{|x-y|}{r}\bigg)^{\beta}\bigg(\sup_{B(w, 2r)\cap \Om} u+r\bigg).
\end{align*}
Since $|x-y|<r$, the last inequality holds as well with exponent $\gamma=\min\{\kappa, \beta\}$,
giving us the assertion of the lemma in this case. The proof of Lemma~\ref{holderkont} is, therefore, completed.
\end{proof}

Following the proof of Theorem~6.31 in~\cite{hkm} one can show that
\emph{if the complement of $\Om$ satisfies the corkscrew condition at $w\in \partial \Om$,
then $\R^n\setminus \Om$ is $\px$-fat at $w$}.
Indeed, using the elementary properties of the relative $\px$-capacity (see Section~10.2 in Dieninig--Harjulehto--H\"ast\"o--R\r u\v zi\v cka~\cite{DHHR}, in particular Lemma 10.2.9 in~\cite{DHHR} and the discussion following it) one shows that \eqref{ineq:fatness} holds at $w$.
Here the log-H\"older continuity of $\px$ plays an important role as one also employs property \eqref{logH-ball2}.
Hence, the complement of a NTA domain is uniformly $\px$-fat, see Definition \ref{def:NTA}.

We are now in a position to prove the main result of this section,
the Carleson-type estimate.

\begin{theorem}[Variable exponent Carleson-type estimate]\label{Carleson}
Assume that $\Omega\subset\R^{n}$ is an NTA domain with constants
$M_{\Omega}$ and $r_{\Omega}$.
Let $w \in\partial\Omega$, $0<r\leq r_\Om$ and $p$ be a bounded log-H\"older continuous variable exponent
satisfying either $p^+ \leq n$ or $p^- > n$. Suppose that $u$ is a positive $\px$-harmonic function in $\Omega \cap
B(w,r)$, continuous on $\bar \Omega \cap B(w,r)$ with $u = 0$ on
$\partial \Omega \cap B(w,r)$. Then there exist constants $c$ and
$c'$ such that
\begin{align*}
\sup_{\Omega \cap B(w, r')} u \leq c \left( u(a_{r'}(w)) + r' \right),
\end{align*}
where $r' = r / c'$.
The constant $c$ depends on $n, p, \sup_{B(w, r)\cap \Om} u$ and $M_{\Omega}$
while $c'$ depends on $n, p^-, p^+, c_{log}$ and $M_{\Omega}, r_\Om$.
\end{theorem}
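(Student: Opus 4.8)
The plan is to fix an arbitrary point $y\in\Om\cap B(w,r')$ and prove that $u(y)\le c\,(u(a_{r'}(w))+r')$; taking the supremum over such $y$ then yields the assertion. Write $A:=a_{r'}(w)$ for the corkscrew point, so that $d(A,\partial\Om)>r'/M_\Om$ and $|A-w|<r'$ by Definition~\ref{def:NTA}. Since $r'<r$, the function $u$ is continuous on the compact set $\overline{\Om}\cap\overline{B}(w,r')\subset\overline{\Om}\cap B(w,r)$, so $\sup_{\Om\cap B(w,r')}u<\infty$; moreover, as observed just before the theorem, the complement of an NTA domain is uniformly $\px$-fat, so Lemmas~\ref{osc-lem} and~\ref{holderkont} are applicable. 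We fix $c'$ large, depending on $n,p^-,p^+,c_{\log},M_\Om,r_\Om$, so that every geometric construction below stays inside $B(w,r)\cap\Om$, where $u$ is $\px$-harmonic, and so that $r'$ lies below the threshold radii occurring in Lemmas~\ref{osc-lem}--\ref{holderkont}.

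First, join $y$ to $A$ by a quasihyperbolic geodesic $\gamma\subset\Om$ (which exists since $\Om$ is uniform) and cover $\gamma$ by a Harnack chain $B_1,\dots,B_N$ exactly as in the construction preceding \eqref{N-est}, so that $B_i\cap B_{i+1}\neq\emptyset$, a fixed dilate of each $B_i$ is contained in $B(w,r)\cap\Om$, and $N\le 3k_\Om(y,A)$. Because $\gamma$ is a uniform curve it is contained in $B(w,Cr')$, so each $B_i$ has radius at most comparable to $r'$; and by \eqref{N-est} we have $N\le c_1(M_\Om)+c_2(M_\Om)\log\big(r'/d(y,\partial\Om)\big)$ when $d(y,\partial\Om)<d(A,\partial\Om)$ (and $N\le c_1(M_\Om)$ otherwise), where we used $d(A,\partial\Om)<r'$. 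Iterating the variable exponent Harnack inequality (Lemma~\ref{harnack}) along the chain, with $c_H=c_H(n,p,\sup_{B(w,r)\cap\Om}u)$, and tracking the constants as in \eqref{ineq:N-indep}, we obtain
\[
u(y)\ \le\ c_H^{\,N}\big(u(A)+c\,N r'\big),\qquad
c_H^{\,N}\ \le\ c_H^{\,c_1(M_\Om)}\Big(\frac{r'}{d(y,\partial\Om)}\Big)^{c_2(M_\Om)\log c_H}.
\]
If $d(y,\partial\Om)\ge r'/(2M_\Om)$ this already gives $u(y)\le c\,(u(A)+r')$; the only remaining issue is to absorb the factor $\big(r'/d(y,\partial\Om)\big)^{c_2(M_\Om)\log c_H}$ when $y$ lies close to $\partial\Om$.

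To control this factor, let $x_0\in\partial\Om$ realize $|y-x_0|=d(y,\partial\Om)=:\delta$, so $|x_0-w|<2r'$, and apply the boundary Hölder estimate (Lemma~\ref{holderkont}) centred at $x_0$ at scale $r'$ — legitimate since $B(x_0,2r')\subset B(w,r)$ and $r'$ is below the required threshold radius. Using $u(x_0)=0$ and $\sup_{B(x_0,2r')\cap\Om}u\le\sup_{B(w,r)\cap\Om}u$ we get
\[
u(y)=|u(y)-u(x_0)|\ \le\ C\Big(\frac{\delta}{r'}\Big)^{\gamma}\Big(\sup_{B(w,r)\cap\Om}u+r'\Big),
\qquad\text{hence}\qquad
\frac{r'}{\delta}\ \le\ \Big(\frac{C\,\big(\sup_{B(w,r)\cap\Om}u+r'\big)}{u(y)}\Big)^{1/\gamma}.
\]
This quantitative vanishing of $u$ at $\partial\Om$ is what turns the chain bound into a genuine estimate. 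If $u(y)\le r'$ the conclusion is trivial; otherwise we insert the last display into the chain estimate, and since $r'<u(y)\le\sup_{B(w,r)\cap\Om}u$ the power factor $\big(r'/\delta\big)^{c_2(M_\Om)\log c_H}$ and the logarithm $\log(r'/\delta)$ entering $N$ are both bounded by constants depending only on $n$, $p$, $\sup_{B(w,r)\cap\Om}u$ and $M_\Om$. The resulting inequality then yields $u(y)\le c\,(u(A)+r')$, and taking the supremum over $y\in\Om\cap B(w,r')$ completes the proof, with $c$ and $c'$ depending on the quantities listed in the statement.

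The main obstacle is precisely this bookkeeping of constants, which is forced by the non-homogeneity of the equation. Because the Harnack constant $c_H$ in Lemma~\ref{harnack} depends on $\sup u$, every application of Harnack multiplies the constant by $c_H$, so an uncontrolled number of balls in the chain would be fatal; this is why one must route the argument through quasihyperbolic geodesics and the explicit logarithmic length bound \eqref{N-est}, and why the NTA corkscrew condition — supplying a point $A$ with $d(A,\partial\Om)\gtrsim r'$, so that no term $\log\big(r'/d(A,\partial\Om)\big)$ appears — is essential and not merely convenient. The same non-homogeneity (the additive ``$+\,r$'' in Lemmas~\ref{harnack} and~\ref{holderkont}) is responsible both for the additive $r'$ in the conclusion and for the dependence of $c$ on $\sup_{B(w,r)\cap\Om}u$, in contrast with the classical $p=\mathrm{const}$ Carleson estimate, where the constant is purely geometric.
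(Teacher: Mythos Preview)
Your argument contains a genuine gap at the crucial step. After combining the Harnack chain with the boundary H\"older estimate you assert that, because $r'<u(y)\le\sup_{B(w,r)\cap\Om}u$, the factor $(r'/\delta)^{c_2(M_\Om)\log c_H}$ is bounded by a constant depending only on $n,p,\sup u,M_\Om$. This is not justified: from $u(y)\le C(\delta/r')^{\gamma}(\sup u+r')$ one only gets
\[
\frac{r'}{\delta}\le\Big(\frac{C(\sup u+r')}{u(y)}\Big)^{1/\gamma}<\Big(\frac{C(\sup u+r')}{r'}\Big)^{1/\gamma},
\]
and the right-hand side blows up as $r'\to0$. What your two ingredients actually yield is the \emph{sublinear} inequality
\[
u(y)^{1+\lambda/\gamma}\ \le\ C\,(\sup u+r')^{\lambda/\gamma}\big(u(A)+Nr'\big),
\qquad \lambda=c_2(M_\Om)\log c_H,
\]
hence $u(y)\lesssim(u(A)+r')^{\gamma/(\gamma+\lambda)}$ with an implicit constant depending on $\sup u$. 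This is strictly weaker than $u(y)\le c\,(u(A)+r')$ whenever $u(A)+r'$ is small, and no amount of bookkeeping recovers the linear bound from a single application.

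The paper closes exactly this gap by the Caffarelli--Fabes--Mortola--Salsa contradiction scheme: assume $\sup_{\Om\cap B(w,r')}u>k(u(a_{r'}(w))+r')$ for large $k$, locate $x_1$ realizing this, and use the H\"older decay together with the chain estimate (your two ingredients, but applied at the \emph{local} scale $r'/2$ around a nearest boundary point $x_1^+$) to produce $x_2\in\partial B(x_1^+,r'/2)$ with $k(u(a_{r'/2}(x_1^+))+r'/2)\le u(x_2)$. Iterating over dyadic scales manufactures a sequence $x_m\to y\in\partial\Om$ with $u(x_m)$ bounded below, contradicting $u(y)=0$. The point is that each step compares $u$ at a corkscrew point to $u$ at the \emph{same} scale, so the sublinear loss never appears; the iteration over scales is precisely what your single-scale argument is missing. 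If you want to salvage a direct approach, you would need to iterate your sublinear bound over a sequence of shrinking radii and control the accumulating exponents---which, once the additive $+r'$ terms are tracked, essentially reproduces the CFMS argument.
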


\begin{proof}
We proceed following the main lines of Caffarelli--Fabes--Mortola--Salsa~\cite{CFMS}.
Let $k$ be a large number to be determined later and assume that
\begin{align}\label{eq:assumption-k}
k \left( u(a_{r'}(w)) + r' \right) < \sup_{\Omega \cap B(w,  r')} u = u(x_1)
\end{align}
where $x_1 \in \partial B(w, r') \cap \Omega$ by the maximum principle.
We want to derive a contradiction if $k$ is chosen large enough.

Suppose first that $d(x_1,\partial \Omega) \geq  r' / 100$. Since $\Om$ is an NTA domain, it is in particular uniform. Hence, we may assume that $r'$ is so small that any two points in $B(w, 2 r') \cap \Omega$ can be connected by a Harnack chain
totally contained in $B(w, r)\cap \Omega$. Then $r' = r/c'$ depends
only on $M_{\Omega}$ and $r_\Om$.
Since the $L^{\infty}$-norm of $u$ is bounded in $B(w,r)\cap \Omega$,
we can iterate Harnack's inequality using the same constant
for each ball contained in $B(w,r)\cap \Omega$. Thus, the Harnack
inequality yields the existence of a constant $c_0$, which by
\eqref{ineq:N-indep} depends only on $c_H$ and $M_{\Omega}$, and
such that
\begin{align}\label{eq:step1case1}
u(x_1) \leq c_0 \left( u(a_{r'}(w)) + r' \right).
\end{align}
This gives us a contradiction if $k > c_0$ and hence the proof of
Theorem \ref{Carleson} follows in the case when $d(x_1,\partial
\Omega) \geq  r' / 100$.

Next, assume that $d(x_1,\partial \Omega) <  r' / 100$.
It follows by the Harnack inequality and discussion before \eqref{ineq:N-indep} that there exist constants $\hat c,\, \lambda \in [1,\infty)$, depending only on $M_{\Omega}$ and $c_H$, such that
\begin{align}\label{eq:harnack-to-boundary}
u(x_1) \leq \hat c \left( \frac {d(a_{r'}(w), \partial \Omega)}{d(x_1, \partial \Omega)} \right)^{\lambda}
\left( u(a_{r'}(w)) + r' \right).
\end{align}
From (\ref{eq:assumption-k}) and (\ref{eq:harnack-to-boundary}) we see that
\begin{align}\label{eq:dist-bound}
\frac{d(x_1, \partial \Omega)}{d(a_{r'}(w), \partial \Omega) } <  \left(\frac {\hat c}{k} \right)^{1/{\lambda}}.
\end{align}
Let $x_1^+ \in B(w, r') \cap \partial\Omega$ be a point minimizing $|x_1^+ - x_1|$.
By decreasing $r'$ if necessary, we apply Lemma \ref{holderkont} for $B(x_1^+, r'/2)$ to obtain

\begin{align}\label{eq:holder-gives}
u(x_1)-u(x_1^+)= u(x_1)\leq C\, \bigg(\frac{d(x_1, \partial \Omega)}{{r'}/4}\bigg)^{\gamma} \bigg(\sup_{B(x_1^+, {r'}/2 )\cap \Om} u+\frac{r'}4\bigg),
\end{align}
where $\gamma$ and $C$ depend on $n, p, \sup_{B(w, r)\cap \Om} u$ and $M_\Om$.
The constant $c'$ now depends on $n, p^-, p^+, c_{log}, M_\Om$ and $r_\Om$.
By using the Harnack inequality, the maximum principle, assumption
\eqref{eq:assumption-k} together with (\ref{eq:dist-bound}) and
(\ref{eq:holder-gives}) we obtain, for some $x_2 \in \partial
B(x_1^+,  r'/2)\cap\Omega$, the existence of $\check c = \check
c(c_H, M_{\Omega})$ such that
\begin{align}\label{eq:inequalityloong}
k \check c^{-1} \left( u(a_{r' / 2}(x_1^+)) + r' \right)
&\leq k \check c^{-1} \left(\check c \left[ u(a_{r'}(w)) + r' \right] +r'\right) = k u(a_{r'}(w)) + k r' + \frac{k}{\check c} r' \nonumber\\
&< k\left(1+\frac{1}{\check c}\right)\left(u(a_{r'}(w))+r'\right) < \left(1+\frac{1}{\check c}\right) u(x_1) \nonumber\\
&\leq \left(1+\frac{1}{\check c}\right)C  \bigg(4\frac{d(x_1, \partial \Omega)}{{r'}}\bigg)^{\gamma} \bigg(u(x_2)+\frac{r'}4\bigg) \nonumber\\
&\leq \left(1+\frac{1}{\check c}\right)C \left(\frac{\hat c}{k}\right)^{\gamma/\lambda} \left(u(x_2)+\frac{r'}4\right).
\end{align}
In the last inequality, we have also used $d(a_{r'}(w),\partial \Omega) \leq r'$.
Define constant $k_1$ such that
\begin{align*}
\check c  \left(1+\frac{1}{\check c}\right)C \left(\frac{\hat c}{k_1}\right)^{\gamma/\lambda} = 1.
\end{align*}
By demanding $k > \max\{c_0, k_1\}$ we obtain
\begin{align*}
k \left(u(a_{\frac{r'}2}(x_1^+)) + r'\right) \leq u(x_2) + \frac{r'}4 \qquad \textrm{and} \qquad u(x_1) < u(x_{2})+\frac{r'}4.
\end{align*}
Let $k > 1$. Then $kr'/2 \geq r'/4$ and the above inequalities take the following form:
\begin{equation}\label{eq:k-repeat}
k \left(u(a_{\frac{r'}2}(x_1^+)) + \frac{r'} 2 \right) \leq u(x_2)  \qquad \textrm{and} \qquad u(x_1) < u(x_{2})+\frac{r'}4.
\end{equation}
We will now repeat the above argument starting from
\eqref{eq:assumption-k} with \eqref{eq:k-repeat} replacing
\eqref{eq:assumption-k}. As now the initial condition has an
additional term on the right-hand-side, we provide details of the
reasoning. Once those are explained, it will become more apparent
how to continue with the recurrence argument. Suppose first that
$d(x_2,\partial \Omega) \geq  r'/200$. Then, similarly as for
$x_1$ we get from \eqref{eq:k-repeat} and the Harnack inequality
that
\begin{equation*}
k \left(u(a_{\frac{r'}2}(x_1^+)) + \frac{r'} 2 \right) \leq u(x_2)  \leq c_0 \left(u(a_{\frac{r'}2}(x_1^+)) + \frac{r'} 2\right),
\end{equation*}
where $c_0$ is the constant from \eqref{eq:step1case1}. Hence, we
again obtain the contradiction if $k > c_0$.

Let now  $d(x_2,\partial \Omega) < r' /200$.
The discussion similar to that for \eqref{eq:harnack-to-boundary} gives us
\begin{equation}\label{eq:harnack-to-boundary2}
u(x_2) \leq \hat c \left( \frac {d(a_{\frac{r'}2}(x_1^+), \partial \Omega)}{d(x_2, \partial \Omega)} \right)^{\lambda} \left( u(a_{\frac{r'}2}(x_1^+)) + \frac {r'} 2 \right).
\end{equation}
From \eqref{eq:k-repeat} and \eqref{eq:harnack-to-boundary2} we see that
\begin{align*}
\frac{d(x_2, \partial \Omega)}{d(a_{\frac{r'}2}(x_1^+), \partial \Omega) } <  \left(\frac {\hat c}{k} \right)^{1/{\lambda}}.
\end{align*}
We take point $x_2^+\in B(x_1^+, \frac{r'}2)\cap \bdry \Om$ minimizing $|x_2-x_2^+|$ and then apply
Lemma \ref{holderkont} for $B(x_2^+, \frac{r'}4)$. In a result we get
\begin{align*}
u(x_2)\leq C\, \bigg(\frac{d(x_2, \partial \Omega)}{r'/8}\bigg)^{\gamma} \bigg(\sup_{B(x_2^+,{r'}/4)\cap \Om} u+\frac{r'}{8}\bigg).
\end{align*}
Following the same reasoning as in \eqref{eq:inequalityloong} we obtain,
for some $x_3 \in \partial B(x_2^+, \frac{r'}4)\cap\Omega$, that
\begin{align*}
k \check c^{-1} \left( u(a_{\frac{r'}4}(x_2^+))+ \frac{r'} 2 \right)
&\leq k \check c^{-1} \left(\check c \left[u(a_{\frac{r'}2}(x_1^+)) + \frac{r'}2\right] + \frac{r'} 2 \right) = k u(a_{\frac{r'}2}(x_1^+)) + \frac{k r'}{2} + \frac{k}{ 2 \check c} r' \\
&< k\left(1+\frac{1}{\check c}\right)\left(u(a_{\frac{r'}2}(x_1^+))+ \frac{r'} 2\right) < \left(1+\frac{1}{\check c}\right) u(x_2)\\
&\leq \left(1+\frac{1}{\check c}\right)C  \bigg(8\frac{d(x_1, \partial \Omega)}{{r'}}\bigg)^{\gamma} \bigg(u(x_3)+\frac {r'}8\bigg)\\
&\leq \left(1+\frac{1}{\check c}\right)C \left(\frac{\hat c}{k}\right)^{\gamma/\lambda} \left(u(x_3)+\frac {r'}8\right).
\end{align*}
Since $k > k_1$ and $k r' / 4 \geq  r' / 8$ we arrive at
\begin{equation*}
k \left(u(a_{\frac{r'}4}(x_2^+))+\frac{r'} 4\right) \leq u(x_3)  \qquad \textrm{and} \qquad u(x_2) < u(x_3)+\frac{r'}{8}.
\end{equation*}
Having established first two steps of the iteration, we now choose points $x_m$, $x_m^+$ in the similar way as we found $x_1, x_1^+$ and $x_2, x_2^+$ and get that
\begin{equation*}
k\left(u(a_{ r' / 2^{m}}(x_m^+)) + \frac{r'}{2^m} \right) \leq u(x_{m+1})  \qquad \textrm{and}
\qquad u(x_m) < u(x_{m+1})+\frac{r'}{2^{m+1}}.
\end{equation*}
If $m\to \infty$, then $x_m\to y\in \bdry \Om\cap B(w, 2 r')$.
Since $u$ is assumed continuous on $\overline{\Omega} \cap B(w,r)$ with $u = 0$ on $\partial\Omega \cap B(w,r)$
we obtain that $u(x_m)\to u(y) = 0$.
Hence we conclude that
\begin{align*}
k \left( u(a_{r'}(w)) + r' \right) &< u(x_1)<u(x_2)+\frac{r'}4< u(x_3)+\frac{r'}{8}+\frac{r'}4 <\ldots <\\
&< u(x_{m})+\frac{r'}2 \to \frac{r'}2 \quad \hbox{ for } m\to \infty.
\end{align*}
This gives
\[
k \left( u(a_{r'}(w)) + r' \right) < \frac{r'}2
\]
which leads to $k < 1/2$ and results in the contradiction by demanding $k > \max\{1,c_0,k_1\}$.
Thus the proof of Theorem \ref{Carleson} is completed.
\end{proof}

\section{Constructions of $\px$-barriers}\label{Sect-barriers}

\setcounter{theorem}{0}
\setcounter{equation}{0}

Below we present two types of barrier functions. The first
type is based on a work of Wolanski~\cite{wol}, however our
Lemma~\ref{le:comparisonfunctions_W} improves result of \cite{wol},
see Remark~\ref{Wolanski-comment}. We employ Wolanski-type barriers
in the upper and lower boundary Harnack estimates, see
Section~\ref{Sect-estimates}. The second type of barriers has been
inspired by a work of Bauman~\cite{bau} who uses barriers in studies
of a boundary Harnack inequality for uniformly elliptic equations
with bounded coefficients. Both approaches have advantages. On one
hand a radius of a ball on which a Wolanski-type barrier exists
depends on less number of parameters then a radius of a
corresponding ball for a Bauman-type barrier, but on the other hand
exponents in Wolanski-type barriers depend on larger number of
parameters than exponents in Bauman-type barriers, cf.
Lemmas~\ref{le:comparisonfunctions_W} and
\ref{le:comparisonfunctions}. Therefore, both types of barriers are
useful in applications.

\subsection{Upper and lower $\px$-barriers of Wolanski-type}
\begin{lemma}\label{le:comparisonfunctions_W}
Let $y\in \R^n$ and $r>0$ be fixed and let $p$ be a bounded
Lipschitz continuous variable exponent on $\overline{B}(y,2r)$. Let
further $M > 0$ be given and for $x\in B(y, 2r)$ define functions
\begin{align*}
\hat u(x) = \frac{M}{e^{-\mu} - e^{-4\mu}} \left( e^{-\mu} - e^{-\mu \frac{|x - y|^2}{r^2}} \right)
\quad \text{and} \quad
\check u(x) = \frac{M}{e^{-\mu} - e^{-4\mu}} \left(e^{-\mu \frac{|x - y|^2}{r^2}} - e^{-4\mu} \right).
\end{align*}
Then there exist $r_* = r_*(p^-, ||\nabla p||_{L^\infty})$ and
$\mu_* = \mu_*(p^+, p^-, n, ||\nabla p||_{L^\infty}, M)$ such that
$\hat u(x)$ is a $\px$-supersolution and $\check u(x)$ is a
$\px$-subsolution in $B(y, 2r) \setminus B(y, r)$ whenever $\mu \geq
\mu_*$ and $r \leq r_*$. Furthermore, it holds that
\begin{align*}
 &\hat u(x) = M \hbox{ on } \partial B(y, 2r)\quad \hbox{ and }\quad \hat u(x) = 0 \hbox{ on } \partial B(y, r)\\
 &\check u(x) = 0 \hbox{ on } \partial B(y, 2r)\quad \hbox{ and }\quad \check u(x) = M \hbox{ on } \partial B(y, r).
\end{align*}
\end{lemma}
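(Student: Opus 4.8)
The plan is to verify directly that $\hat u$ is a supersolution and $\check u$ a subsolution in the annulus $B(y,2r)\setminus B(y,r)$ by plugging them into the $\px$-Laplace operator. Writing $\hat u$ and $\check u$ as functions of $t=|x-y|^2/r^2$, both are (up to affine normalization) built from $\psi(t)=e^{-\mu t}$, so it suffices to compute $\Delta_{\px}$ of a radial profile $g(|x-y|)$ and track the sign. First I would set $s=|x-y|$, compute $\nabla\hat u = \hat u'(s)\,\frac{x-y}{s}$ with $\hat u'(s) = \frac{M}{e^{-\mu}-e^{-4\mu}}\cdot\frac{2\mu s}{r^2}e^{-\mu s^2/r^2}$ (and the analogous $\check u'$ with the opposite sign), so $|\nabla\hat u|^{\px-2}\nabla\hat u$ is again radial. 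Then I expand
\[
\div\big(|\nabla u|^{\px-2}\nabla u\big) = |\nabla u|^{\px-2}\Big(\Delta u + (\px-2)\frac{\langle D^2u\,\nabla u,\nabla u\rangle}{|\nabla u|^2}\Big) + |\nabla u|^{\px-2}\langle\nabla u,\nabla\px\rangle\log|\nabla u|,
\]
the extra last term being precisely the variable-exponent contribution. For a radial profile this becomes $|\nabla u|^{\px-2}$ times a one-variable expression in $s$ involving $g'$, $g''$, $\frac{n-1}{s}g'$, the factor $(\px-1)$ in front of $g''$, plus the term $\langle\nabla\px,\tfrac{x-y}{s}\rangle\, g'\log|g'|$.

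The key step is to show that, on the annulus where $r\le s\le 2r$, the ``good'' second-derivative term dominates. For $\hat u$ one has $g'(s)>0$ and $g''(s) = \frac{M}{e^{-\mu}-e^{-4\mu}}\cdot\frac{2\mu}{r^2}e^{-\mu s^2/r^2}\big(\frac{2\mu s^2}{r^2}-1\big)$, which is negative for large $\mu$ (since $s^2/r^2\ge 1$ forces $\frac{2\mu s^2}{r^2}-1\ge 2\mu-1>0$ — wait, that is positive; the sign that actually makes $\hat u$ a supersolution comes from $(\px-1)g''$ plus the geometric $\tfrac{n-1}{s}g'$ being outweighed once one inserts the $\mu$-dependence correctly). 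Concretely, after factoring out $e^{-\mu s^2/r^2}\cdot\frac{2\mu}{r^2}\cdot\frac{M}{e^{-\mu}-e^{-4\mu}}$, the bracket for $\Delta_{\px}\hat u$ is of the form
\[
(\px-1)\Big(\frac{2\mu s^2}{r^2}-1\Big) + (n-1) + s\,\langle\nabla\px,\tfrac{x-y}{s}\rangle\,\frac{r^2}{2\mu s}\Big(\log\tfrac{2\mu s}{r^2}-\tfrac{\mu s^2}{r^2}+\log\text{(const)}\Big),
\]
and I would choose $\mu_*$ so large that the leading $\mu$-linear term $2(\px-1)\mu s^2/r^2 \ge 2(p^--1)\mu$ beats the bounded terms and the logarithmic $p$-gradient term (which grows only like $\mu\log\mu$ but is multiplied by $\|\nabla p\|_\infty \cdot \frac{r^2}{2\mu s}\cdot\frac{\mu s^2}{r^2} = \|\nabla p\|_\infty\cdot\frac{s}{2}\le\|\nabla p\|_\infty r$, hence controlled by taking $r\le r_*$). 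Getting the sign right forces $\hat u$ to be a supersolution (bracket $\le 0$) and $\check u$ a subsolution (bracket $\ge 0$, since $\check u' = -\hat u'\cdot(\text{sign})$ flips the relevant signs) on the whole annulus.

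The main obstacle is exactly this sign bookkeeping in the presence of the $\langle\nabla\px,\nabla u\rangle\log|\nabla u|$ term: unlike the constant-exponent case, one cannot simply quote that radial decreasing profiles are super/subsolutions, and the logarithmic factor $\log|\nabla\hat u|$ blows up where $\nabla\hat u$ is small — but on the closed annulus $r\le|x-y|\le 2r$ the gradient is bounded below away from zero (it vanishes only on $\partial B(y,r)$ in the limit, but $|\hat u'(s)|\ge c\mu/r$ there too since the $e^{-\mu}$ is the larger value), so $\log|\nabla\hat u|$ is comparable to $\log(\mu/r) + \mu s^2/r^2$ terms that are absorbed by the quadratic gain from $\mu$. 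I would make the dependence explicit: $r_*$ depends only on $p^-$ and $\|\nabla p\|_{L^\infty}$ (it is what makes $\|\nabla p\|_\infty r \le \tfrac12(p^--1)$, say), while $\mu_*$ must also see $p^+$, $n$, and $M$ (the latter entering through $\log|\nabla\hat u|$ via the normalizing constant $\frac{M}{e^{-\mu}-e^{-4\mu}}$). The boundary values are then immediate: at $s=2r$, $e^{-\mu s^2/r^2}=e^{-4\mu}$ gives $\hat u=M$, $\check u=0$; at $s=r$, $e^{-\mu s^2/r^2}=e^{-\mu}$ gives $\hat u=0$, $\check u=M$, which is the last displayed assertion.
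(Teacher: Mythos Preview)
Your approach is the same as the paper's: expand $\Delta_{\px}u$ via
\[
\Delta_{\px}u = |\nabla u|^{\px-2}\Big(\Delta u + (\px-2)\tfrac{\Delta_\infty u}{|\nabla u|^2} + \langle\nabla p,\nabla u\rangle\log|\nabla u|\Big),
\]
insert the radial profile, factor out the positive prefactor, and arrange that the $\mu$-linear term from the Hessian dominates both the bounded terms $n+\px-2$ and the variable-exponent term $\langle\nabla p,x-y\rangle\log|\nabla\hat u|$. The boundary-value check is correct.

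However, your bookkeeping needs to be cleaned up in two places. First, the sign of $g''$: with $g(s)=-Ae^{-\mu s^2/r^2}+B$ one has $g''(s)=A\tfrac{2\mu}{r^2}e^{-\mu s^2/r^2}\big(1-\tfrac{2\mu s^2}{r^2}\big)$, so after factoring out the positive constant the bracket for $\hat u$ is $(\px-1)\big(1-\tfrac{2\mu s^2}{r^2}\big)+(n-1)+s\,\langle\nabla p,\tfrac{x-y}{s}\rangle\log|g'(s)|$, not your version with $(\tfrac{2\mu s^2}{r^2}-1)$; the dominant contribution is then $-2(p^--1)\mu$, which is what makes the bracket $\le 0$. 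Second, your extra factor $\tfrac{r^2}{2\mu s}$ in front of the $\nabla p$-term is spurious (the correct prefactor after factoring is just $s$), and $\log|\nabla\hat u|$ grows \emph{linearly} in $\mu$ (via the $-\mu s^2/r^2$ in the exponent), not like $\mu\log\mu$. The corrected estimate is $|s\,\langle\nabla p,\cdot\rangle\log|g'(s)||\le 2r\|\nabla p\|_\infty\big(|\log M|+|\log r|+\log\mu+3\mu+C\big)$, so the coefficient of $\mu$ in the full bracket is $8r\|\nabla p\|_\infty-2(p^--1)$; this is exactly why $r_*$ depends only on $p^-$ and $\|\nabla p\|_\infty$ (take $r_*\le(p^--1)/(4\|\nabla p\|_\infty)$, and also $r_*\le 1/4$ to control $r|\log r|$), after which $\mu_*=\mu_*(p^+,p^-,n,\|\nabla p\|_\infty,M)$ absorbs the remaining bounded terms. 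Once these corrections are made your argument coincides with the paper's.
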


\begin{rem}\label{Wolanski-comment}
 We would like to point out that the above theorem improves substantially some results on barrier functions in variable exponent setting, see Corollary~4.1 in Wolanski~\cite{wol}. Namely, in \cite{wol} the radius $r$ depends also on $M$ whereas here we manage to avoid such a dependence (see (\ref{eq:logbound}) and (\ref{eq:sista_super}) for details).
 This plays a role in the proof of Lemma~\ref{le:lower}.
\end{rem}

\begin{proof}
We begin the proof by noting that for any twice differentiable function $u$ we have
\begin{align*}
\Delta_{p(x)} u = \div \left( |\nabla u|^{p(x)-2} \nabla u \right) = \left\langle \nabla \left(|\nabla u|^{p(x)-2}\right), \nabla u \right\rangle + |\nabla u|^{p(x)-2} \Delta u.
\end{align*}
Now
\begin{align*}
&\left\langle \nabla \left(|\nabla u|^{p(x)-2}\right), \nabla u \right \rangle
= \sum_{i=1}^n \frac{\partial}{\partial x_i} \left(|\nabla u|^{p(x)-2}\right) \frac{\partial u}{\partial x_i}
= \sum_{i=1}^n \frac{\partial}{\partial x_i} \left(e^{(p(x)-2)\log\left( |\nabla u| \right)}\right) \frac{\partial u}{\partial x_i}\\
&= \sum_{i=1}^n \frac{\partial}{\partial x_i} \bigg\{(p(x)-2)\log\left( |\nabla u| \right)\bigg\} |\nabla u|^{p(x)-2} \frac{\partial u}{\partial x_i}\\
&= \sum_{i=1}^n \left\{ \frac{\partial p}{\partial x_i} \log\left( |\nabla u| \right) + (p(x)-2)\frac{1}{|\nabla u|} \frac{\partial}{\partial x_i} \left( |\nabla u| \right) \right\} |\nabla u|^{p(x)-2} \frac{\partial u}{\partial x_i}\\
&= \sum_{i=1}^n \left\{ \frac{\partial p}{\partial x_i} \log\left( |\nabla u| \right) + (p(x)-2)\frac{1}{|\nabla u|^2} \sum_{j=1}^n \frac{\partial u}{\partial x_j}\frac{\partial^2 u}{\partial x_j \partial x_i} \right\} |\nabla u|^{p(x)-2} \frac{\partial u}{\partial x_i}\\
&=  \left\{ \langle \nabla p, \nabla u \rangle \log\left( |\nabla u| \right) + (p(x)-2)\frac{1}{|\nabla u|^2} \Delta_{\infty} u \right\} |\nabla u|^{p(x)-2}.
\end{align*}
Moreover, assuming $|\nabla u| > 0$ we obtain the following:
\begin{align}
\Delta_{\px} u \leq(\geq) \,0 \iff \langle \nabla p, \nabla u \rangle \log\left( |\nabla u| \right) + (p(x)-2) \frac{\Delta_{\infty} u}{|\nabla u|^2}   +  \Delta u \leq (\geq)\, 0.\label{eq:normalizedplap1}
\end{align}
From \eqref{eq:normalizedplap1} we see that comparing to the
constant $p$ case we have the extra term involving no second
derivatives but the gradient of both $u$ and $\px$ instead.

We begin by showing that $\hat u$ is a supersolution. We will find $\mu$, $A$, $B$ and $r$ such that the function
\begin{align}
\hat u(x) = - A e^{-\mu \left( \frac{|x - y|}{r}\right)^2 } + B, \qquad \textrm{where} \quad r < |x - y| < 2 r
\label{eq:defnsuper}
\end{align}
has the desired properties. Differentiation of $\hat u$ yields
\begin{align}
\hat u_{x_i} = \frac{2 A \mu}{r^2} e^{-\mu \left( \frac{|x - y|}{r}\right)^2 } (x_i - y_i), \qquad | \nabla \hat u| =  \frac{2 A \mu}{r^2} e^{-\mu \left( \frac{|x - y|}{r}\right)^2 } |x - y|, \label{eq:supergrad}
\end{align}

\begin{align*}
\hat u_{x_i x_j} = \frac{2 A \mu}{r^2} e^{-\mu \left( \frac{|x - y|}{r}\right)^2 }
\bigg\{ \delta_{ij} - \frac{2 \mu}{r^2} (x_i - y_i)(x_j - y_j)\bigg\}.
\end{align*}
Next we observe that
\begin{align}
&\Delta \hat u =  \sum_{i=1}^n \hat u_{x_i x_i}
= \frac{2 A \mu}{r^2} e^{-\mu \left( \frac{|x - y|}{r}\right)^2 } \sum_{i=1}^n \bigg\{ 1 - \frac{2 \mu}{r^2} (x_i - y_i)^2\bigg\}
= \frac{2 A \mu}{r^2} e^{-\mu \left( \frac{|x - y|}{r}\right)^2 } \bigg\{ n - \frac{2 \mu}{r^2} |x - y|^2 \bigg\},
\label{eq:lapl}
\end{align}
and since $\sum_{i,j = 1}^n \delta_{ij} (x_i - y_i)  (x_j - y_j) = |x-y|^2$ and $\sum_{i,j = 1}^n  (x_i - y_i)^2  (x_j - y_j)^2 = |x-y|^4$
we also have
\begin{align}
&\Delta_{\infty} \hat u =  \sum_{i,j = 1}^n \hat u_{x_i x_j} \hat u_{x_i} \hat u_{x_j} \nonumber \\
&= \left(\frac{2 A \mu}{r^2}\right)^3 e^{-3\mu \left( \frac{|x - y|}{r}\right)^2 }
\sum_{i,j = 1}^n \bigg\{ (x_i - y_i)(x_j - y_j) \delta_{ij} - \frac{2 \mu}{r^2}(x_i - y_i)^2(x_j - y_j)^2\bigg\}
\label{eq:inftylap}\\
&= \left(\frac{2 A \mu}{r^2}\right)^3 e^{-3\mu \left( \frac{|x - y|}{r}\right)^2 }|x-y|^2
\bigg\{ 1 - \frac{2 \mu}{r^2}|x-y|^2\bigg\}. \nonumber
\end{align}
We collect expressions \eqref{eq:lapl} and \eqref{eq:inftylap} and insert them into \eqref{eq:normalizedplap1} to obtain the following inequality.
\begin{align*}
\langle \nabla p, \nabla u \rangle \log\left( |\nabla \hat u| \right) +
\frac{2 A \mu}{r^2}e^{-\mu \left( \frac{|x - y|}{r}\right)^2 }
\bigg\{(p(x)-2) \left(1 - \frac{2 \mu}{r^2}|x-y|^2\right) + n - \frac{2 \mu}{r^2}|x-y|^2 \bigg\} \leq 0.
\end{align*}
We simplify the above condition by using $\langle \nabla p, \nabla u \rangle =  \frac{2 A \mu}{r^2} e^{-\mu \left( \frac{|x - y|}{r}\right)^2 } \langle \nabla p,  x - y \rangle$:
\begin{align*}
\langle \nabla p,  x - y \rangle \log\left( |\nabla \hat u| \right)
- \frac{2 \mu}{r^2}|x-y|^2 (\p - 1) + n + \p - 2 \leq 0.
\end{align*}
This holds true if
\begin{align}\label{eq:normalizedplap_strong1}
2 r\|\nabla p\|_{L^\infty}  |\log\left( |\nabla \hat u| \right)| - { 2 \mu } (p^{-} - 1) + n + p^+ - 2 \leq 0.
\end{align}
Next we demand that our function $\hat u$ satisfies $\hat u(x) = M$ whenever $x \in \partial B(y, 2r)$
and $\hat u(x) = 0$ whenever $x \in \partial B(y, r)$.
These assumptions imply that $A = M/(e^{-\mu} - e^{-4\mu})$ and $B = M e^{-\mu}/(e^{-\mu} - e^{-4\mu})$.

We now bound $\log(|\nabla\hat u|)$. By \eqref{eq:supergrad} we have
\begin{align*}
| \nabla \hat u| =  \frac{2 M \mu}{e^{-\mu} - e^{-4\mu}} \frac{|x-y|}{r^2}  e^{-\mu \left( \frac{|x - y|}{r}\right)^2 },
\end{align*}
and hence, upon using $r < |x-y| < 2 r$, we obtain the following estimate
\begin{align*}
\frac{2 M \mu e^{-3\mu}}{r(1-e^{-3\mu})} \leq | \nabla \hat u| \leq  \frac{4 M \mu}{r(1-e^{-3\mu})}.
\end{align*}
Thus
\begin{align*}
- 3\mu + \log\left(\frac{2 M \mu}{r(1-e^{-3\mu})}\right) \leq \log \left(| \nabla \hat u|\right) \leq  \log\left(\frac{4 M \mu}{r(1-e^{-3\mu})}\right).
\end{align*}
Therefore, we conclude
\begin{align}\label{eq:logbound}
| \log \left(| \nabla \hat u|\right) | \leq  \log\left(\frac{4}{1-e^{-3\mu}}\right) + |\log M| + |\log r| + \log \mu + 3\mu.
\end{align}
Assume that $\mu$ is large and combine
\eqref{eq:normalizedplap_strong1} together with \eqref{eq:logbound}
to obtain that $\hat u$ is a supersolution provided that the
following condition is satisfied.
\begin{align}\label{eq:sista_super}
2 r \|\nabla p\|_{L^\infty} \left(\log\left(\frac{4}{1-e^{-3\mu}}\right) + |\log M| + |\log r|  +  4\mu \right) -  2 \mu (p^{-} - 1) + n + p^+ - 2 \leq 0.
\end{align}
Upon rearranging terms in \eqref{eq:sista_super} we obtain the following inequality:
\begin{align*}
\mu \left(8r \|\nabla p\|_{L^\infty} -2 (p^--1)\right) +2r\|\nabla p\|_{L^\infty}\left(\log\left(\frac{4}{1-e^{-3\mu}}\right) + |\log M| + |\log r|\right) + n + p^+ - 2 \leq 0.
\end{align*}
Pick $r_0 = (p^{-} - 1) / (4 \|\nabla p\|_{L^\infty} )$.  Then for
$r \leq r_0$ the above inequality can be satisfied by a large enough
$\mu$ (upon including the term $|\log M|$ into the first log-term).
Moreover, taking $r_* = \min\{r_0, 1/4\}$ ensures that $r |\log{r}|$
is an increasing function of $r$ for $r \leq r_*$. Thus we conclude
that if  $r \leq r_*$, then there exists $\mu_* = \mu_*(p^+, p^-, n,
||\nabla p||_{L^\infty}, M)$ such that $\hat u$ is a supersolution
for $\mu \geq \mu_*$. This completes the proof of the supersolution.

Next we want to show that $\check u$ is a subsolution.
We will find $\mu$, $C$, $D$ and $r$ in the function
\begin{align*}
\check u(x) =  C e^{-\mu \left( \frac{|x - y|}{r}\right)^2 } + D, \qquad \textrm{where} \quad r < |x - y| < 2 r.
\end{align*}
In this case
\begin{align}
\check u_{x_i} = - \frac{2 C \mu}{r^2} e^{-\mu \left( \frac{|x - y|}{r}\right)^2 } (x_i - y_i), \qquad | \nabla \check u| =  \frac{2 C \mu}{r^2} e^{-\mu \left( \frac{|x - y|}{r}\right)^2 } |x - y|, \label{eq:subsol}
\end{align}

\begin{align*}
\check u_{x_i x_j} = -\frac{2 C \mu}{r^2} e^{-\mu \left( \frac{|x - y|}{r}\right)^2 }
\bigg\{ \delta_{ij} - \frac{2 \mu}{r^2} (x_i - y_i)(x_j - y_j)\bigg\}.
\end{align*}
Similarly to computations in \eqref{eq:lapl} and \eqref{eq:inftylap} we observe that
\begin{align*}
\Delta \check u =
-\frac{2 C \mu}{r^2} e^{-\mu \left( \frac{|x - y|}{r}\right)^2 } \bigg\{ n - \frac{2 \mu}{r^2} |x - y|^2 \bigg\},
\end{align*}

\begin{align*}
\Delta_{\infty} \check u
= -\left(\frac{2 C \mu}{r^2}\right)^3 e^{-3\mu \left( \frac{|x - y|}{r}\right)^2 }|x-y|^2
\bigg\{ 1 - \frac{2 \mu}{r^2}|x-y|^2\bigg\},
\end{align*}
and
\begin{align*}
\langle \nabla p, \nabla u \rangle = - \frac{2 C \mu}{r^2} e^{-\mu \left( \frac{|x - y|}{r}\right)^2 } \langle \nabla p,  x - y \rangle.
\end{align*}
Collecting terms we obtain from \eqref{eq:normalizedplap1} that the condition for $\check u$ to be a subsolution becomes
\begin{align}\label{eq:normalizedplap_strong1ny}
-2 r\|\nabla p\|_{L^\infty}  |\log\left( |\nabla \check u| \right)| + 2 \mu (p^{-} - 1) - n - p^+ + 2 \geq 0.
\end{align}
This is equivalent to \eqref{eq:normalizedplap_strong1}. Finally, we
check that assumptions $\check u(x) = 0$ whenever $x \in \partial
B(y, 2r)$ and $\check u(x) = M$ whenever $x \in \partial B(y, r)$
imply $C = M/(e^{-\mu} - e^{-4\mu})$ and $D = M e^{-4\mu}/(e^{-\mu}
- e^{-4\mu})$. Let $A$ be as in the definition of
supersolution $\hat u$, see \eqref{eq:defnsuper} and  cf. the
discussion following \eqref{eq:normalizedplap_strong1}. Since $C =
A$, we obtain that bounds for $\log(|\nabla\check u|)$ are identical
to the case of supersolution. Therefore, the proof of the lemma is
completed.
\end{proof}

\subsection{Upper and lower $\px$-barriers of Bauman-type}

To find the upper bound we make use of the following barrier
functions.

\begin{lemma}\label{le:comparisonfunctions}
Let $y\in \R^n$ and $r>0$ be fixed and let $p$ be a bounded Lipschitz continuous
variable exponent on $\overline{B}(y,2r)$.
Let further $M > 0$ be given and for $x\in B(y, 2r)$ define functions
\begin{align*}
\hat u(x) = \frac{M}{1-2^{-\mu}} \left[1 - \left( \frac{r}{|x - y|}\right)^{\mu}\right]
\quad\hbox{ and }\quad \check u(x) = -\frac{2^{-\mu}M}{1-2^{-\mu}} \left[1 - \left( \frac{2r}{|x - y|}\right)^{\mu}\right]
\end{align*}
There exist $\mu_* = \frac{n-p^-+1}{p^--1}$ and $r_*$ depending only on $M, p^-,n$ and $\|\nabla p\|_{L^\infty}$ such that if $\mu \geq \mu_*$ and $r \leq r_*$, then $\hat u$ is a positive $\px$-supersolution while $\check u$ is a negative $\px$-subsolution in $B(y, 2r) \setminus B(y, r)$.
Moreover, $\hat u = M$ on $\partial B(y,2r)$ and $\hat u = 0$ on $\partial B(y, r)$, whereas $\check u = 0$ on $\partial B(y,2r)$ and $\check u = M$ on $\partial B(y, r)$.
\end{lemma}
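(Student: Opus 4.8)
The plan is to proceed exactly as in the proof of Lemma~\ref{le:comparisonfunctions_W}, reducing the $\px$-Laplacian inequality to the pointwise condition \eqref{eq:normalizedplap1} and then estimating each term for the explicit radial candidates. First I would record that both $\hat u$ and $\check u$ have the form $x\mapsto a+b|x-y|^{-\mu}$ on the annulus $B(y,2r)\setminus B(y,r)$, so it suffices to analyze $v(x)=|x-y|^{-\mu}$ (up to a sign and positive multiplicative constant, which do not affect whether the resulting expression is $\le 0$ or $\ge 0$ after dividing by the positive factor $|\nabla u|^{\p-2}$). Writing $s=|x-y|\in(r,2r)$, a direct differentiation gives $v_{x_i}=-\mu s^{-\mu-2}(x_i-y_i)$, hence $|\nabla v|=\mu s^{-\mu-1}$, and the standard formulas for $\Delta v$ and $\Delta_\infty v$ for radial functions yield $\Delta v = \mu(\mu+2-n)s^{-\mu-2}$ and $\Delta_\infty v/|\nabla v|^2 = -(\mu+1)s^{-\mu-2}$. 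Plugging these into \eqref{eq:normalizedplap1}, after dividing by the positive quantity $\mu s^{-\mu-2}$, the condition for $\hat u$ to be a supersolution becomes
\begin{align*}
\langle\nabla p, x-y\rangle\,\tfrac{1}{\mu s^{-\mu-1}}\log|\nabla\hat u| \;-\;(\p-2)(\mu+1)\;+\;(\mu+2-n)\;\le\;0,
\end{align*}
and analogously (with the opposite inequality and the sign of the first term flipped) for $\check u$ to be a subsolution.

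Next I would handle the gradient-of-$p$ term, which is the only genuinely new feature compared with the constant-exponent case. Since $|x-y|=s\le 2r$ and $|\nabla p|\le\|\nabla p\|_{L^\infty}$, we have $|\langle\nabla p,x-y\rangle|\le 2r\|\nabla p\|_{L^\infty}$; and $|\nabla\hat u| = \frac{M\mu}{1-2^{-\mu}} s^{-\mu-1}$, so $|\log|\nabla\hat u||$ is bounded by a sum of the form $|\log M| + \log\mu + (\mu+1)|\log s| + \log\frac{1}{1-2^{-\mu}}$, and with $s\in(r,2r)$, $r\le r_*\le 1/4$, this is $\le C_1(1+|\log r| + \mu)$ with $C_1$ depending only on $M$. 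The coefficient $\tfrac{1}{\mu s^{-\mu-1}}$ multiplying it equals $s^{\mu+1}/\mu\le (2r)^{\mu+1}/\mu\le 1$ for $r\le 1/2$, so the whole first term is bounded in absolute value by $2r\|\nabla p\|_{L^\infty}\cdot C_1(1+|\log r|+\mu)$. The remaining terms $-(\p-2)(\mu+1)+(\mu+2-n)$ are bounded above by $-(p^--1)(\mu+1) + (\mu+2-n) = -\mu(p^--2+1) + \text{(lower order)}$; more carefully, $-(\p-2)(\mu+1)+\mu+2-n \le -(p^--1)\mu + (n-p^-+1) - (p^--1)$ when $\p\ge p^-\ge 1$ wait—one must be slightly careful about the sign of $p(x)-2$, but since we only need an upper bound and $p^-\ge 1$, using $\p\ge p^-$ gives $-(\p-2)(\mu+1)\le -(p^--2)(\mu+1)$ when $\mu+1>0$, hence the dominant term is $-(p^--1)\mu$. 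Thus the supersolution inequality is implied by
\begin{align*}
-(p^--1)\mu + (n-p^-+1) + 2r\|\nabla p\|_{L^\infty}\,C_1(1+|\log r| + \mu) \le 0.
\end{align*}
Choosing first $\mu_* = \frac{n-p^-+1}{p^--1}$ makes the $\mu$-independent part nonpositive at $r=0$; then choosing $r_*$ small (depending on $M,p^-,n,\|\nabla p\|_{L^\infty}$) so that the $r$-term is dominated — using that $r\mapsto r|\log r|$ is increasing and vanishes at $0$ on $[0,1/4]$ — closes the argument for all $\mu\ge\mu_*$, $r\le r_*$.

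For $\check u$ the computation is identical: its gradient is $|\nabla\check u| = \frac{2^{-\mu}M\mu(2r)^{\mu}}{1-2^{-\mu}} s^{-\mu-1}$ which again satisfies the same type of logarithmic bound, and the required subsolution inequality is the negative of the supersolution one with $\hat u$ replaced by $\check u$, so the same $\mu_*$ and (a possibly smaller) $r_*$ work; positivity of $\hat u$ and negativity of $\check u$ on the annulus are immediate from the monotonicity of $s\mapsto s^{-\mu}$ and the prefactors. Finally the boundary values are a direct substitution of $|x-y|=2r$ and $|x-y|=r$ into the explicit formulas, matching the claimed values $M$ and $0$ (resp.\ $0$ and $M$). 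The main obstacle, as in Lemma~\ref{le:comparisonfunctions_W}, is precisely controlling the extra term $\langle\nabla p,\nabla u\rangle\log|\nabla u|$: one must verify that the $|\log r|$ growth it introduces is beaten by the linear-in-$\mu$ gain from $-(p^--1)\mu$ after fixing $\mu\ge\mu_*$ and then shrinking $r$; everything else is routine radial calculus. I should also note the payoff advertised before the lemma — here $\mu_*$ depends only on $n,p^-$ (not on $M$ or $\|\nabla p\|_{L^\infty}$), which is the trade-off against the $r_*$ of Wolanski-type barriers.
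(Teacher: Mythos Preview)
Your strategy is the paper's: reduce $\Delta_{\px}\hat u\le 0$ to the pointwise condition \eqref{eq:normalizedplap1}, compute all terms for the explicit radial candidate, and then separate the ``main'' part $-\mu(\p-1)+n-\p$ from the nuisance term $\langle\nabla p,x-y\rangle\log|\nabla\hat u|$. However, your execution has a couple of genuine slips and one gap.

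\textbf{Computational issues.} After dividing by the positive scalar $A\mu r^{\mu}s^{-\mu-2}$ the correct reduced inequality is
\[
\langle\nabla p,x-y\rangle\,\log|\nabla\hat u|\;-\;\mu(\p-1)\;+\;n-\p\;\le\;0,
\]
exactly as the paper obtains. Your displayed inequality carries an extra factor $\tfrac{1}{\mu s^{-\mu-1}}$ on the first term (it should not be there; you have effectively divided the $\nabla p$ term twice), and a sign error on the Laplacian contribution. You also dropped the factor $r^{\mu}$ in $|\nabla\hat u|$. These are fixable, but they propagate into your later confusion about the sign of $\p-2$, which in the correct form never arises: one simply uses $\p\ge p^-$ to get $-\mu(\p-1)+n-\p\le -\mu(p^--1)+n-p^-$.

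\textbf{The gap.} The substantive difference from the paper is your bound on $|\log|\nabla\hat u||$. You allow it to grow like $C_1(1+|\log r|+\mu)$, so your final inequality reads
\[
-(p^--1)\mu+(n-p^-+1)+2r\|\nabla p\|_{L^\infty}C_1\bigl(1+|\log r|+\mu\bigr)\le 0.
\]
You then say ``choose $\mu_*=(n-p^-+1)/(p^--1)$ and shrink $r$'', but at $\mu=\mu_*$ the left side equals $2r\|\nabla p\|_{L^\infty}C_1(1+|\log r|+\mu_*)>0$ for every $r>0$ --- strictly speaking your $\mu_*$ never satisfies the inequality. What actually saves the argument is a \emph{slope} condition: one also needs $2r\|\nabla p\|_{L^\infty}C_1<p^--1$, so that the $\mu$-linear piece of the error is absorbed into $-(p^--1)\mu$; then a further smallness of $r$ handles the remaining bounded terms. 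You should make this explicit. Even then, note that the inequality at $\mu=\mu_*$ only gives $\le$ something strictly positive, so one has to argue via the paper's device of requiring $-\mu(p^--1)+n-p^-\le -1$ (which is exactly what picks out $\mu_*=(n-p^-+1)/(p^--1)$) and then forcing the nuisance term to be $\le 1$.

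The paper avoids the $\mu$-linear growth altogether: it first shrinks $r$ so that $|\nabla\hat u|>1$ on the annulus, after which $|\log|\nabla\hat u||=\log|\nabla\hat u|\le \log\!\bigl(M\mu/(1-2^{-\mu})\bigr)+|\log r|$, i.e.\ only $O(\log\mu+|\log r|)$. This is why $\mu_*$ ends up depending only on $n,p^-$ while $r_*$ absorbs the dependence on $M$ and $\|\nabla p\|_{L^\infty}$ --- the trade-off you correctly identify at the end.
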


\begin{proof}
Note that by the formal computations
$\div \left( |\nabla \hat u|^{p(x)-2} \nabla \hat u \right) \leq 0$
is equivalent to
\begin{align}
\left \langle \nabla \left(|\nabla \hat u|^{p(x)-2}\right), \nabla \hat u \right \rangle + |\nabla \hat u|^{p(x)-2} \Delta \hat u \leq 0. \label{eq:upper1}
\end{align}
Clearly $\hat u\in C^2(B(y, 2r) \setminus B(y, r))$ and thus we have that
\begin{align}
&\left \langle \nabla \left(|\nabla \hat u|^{p(x)-2}\right), \nabla \hat u \right \rangle
= \sum_{i=1}^n \frac{\partial}{\partial x_i} \left(|\nabla \hat u|^{p(x)-2}\right) \frac{\partial \hat u}{\partial x_i}
= \sum_{i=1}^n \frac{\partial}{\partial x_i} \left(e^{(p(x)-2)\log\left( |\nabla \hat u| \right)}\right) \frac{\partial \hat u}{\partial x_i}\nonumber \\
&= \sum_{i=1}^n \frac{\partial}{\partial x_i} \bigg\{(p(x)-2)\log\left( |\nabla \hat u| \right)\bigg\} |\nabla \hat u|^{p(x)-2} \frac{\partial \hat u}{\partial x_i}\nonumber \\
&= \sum_{i=1}^n \left\{ \frac{\partial p}{\partial x_i} \log\left( |\nabla \hat u| \right) + (p(x)-2)\frac{1}{|\nabla \hat u|} \frac{\partial}{\partial x_i} \left( |\nabla \hat u| \right) \right\} |\nabla \hat u|^{p(x)-2} \frac{\partial \hat u}{\partial x_i}\nonumber \\
&= \sum_{i=1}^n \left\{ \frac{\partial p}{\partial x_i} \log\left( |\nabla \hat u| \right) + (p(x)-2)\frac{1}{|\nabla \hat u|^2} \sum_{j=1}^n \frac{\partial \hat u}{\partial x_j}\frac{\partial^2 \hat u}{\partial x_j \partial x_i} \right\} |\nabla \hat u|^{p(x)-2} \frac{\partial \hat u}{\partial x_i}\nonumber \\
&=  \left\{ \langle \nabla p, \nabla \hat u \rangle \log\left( |\nabla \hat u| \right) + (p(x)-2)\frac{1}{|\nabla \hat u|^2} \Delta_{\infty} \hat u \right\} |\nabla \hat u|^{p(x)-2}.\label{eq:upper2}
\end{align}
Moreover, computations at \eqref{eq:u-hat-grad} (see below) will give us that in the given annulus $|\nabla \hat u| > 0$. This, together with \eqref{eq:upper1} and \eqref{eq:upper2} imply that we need the following inequality to be satisfied:
\begin{align}\label{eq:normalizedplap}
|\nabla \hat u|^2 \langle \nabla p, \nabla u \rangle \log\left( |\nabla \hat u| \right) + (p(x)-2) \Delta_{\infty} \hat u   + |\nabla \hat u|^2 \Delta \hat u \leq 0.
\end{align}
From \eqref{eq:normalizedplap} we see that comparing to the constant
$p$ case we have the extra term involving no second derivatives but
the gradient of both $\hat u$ and $\px$ instead.

Let us show first that $\hat u$ is a subsolution. This will be done by choosing
parameters $\mu$, $A$, $B$ and $r$ in the function
\begin{align*}
\hat u(x) = - A \left( \frac{r}{|x - y|}\right)^{\mu} + B, \qquad \textrm{where} \quad r < |x - y| < 2r.
\end{align*}
Differentiation of $\hat u$ yields
\begin{align*}
\hat u_{x_i} &= A \mu r^{\mu} |x-y|^{-(\mu + 2)} (x_i - y_i), \\
\hat u_{x_i x_j} &=  A \mu r^{\mu} |x-y|^{-(\mu + 4)} \bigg\{ |x-y|^2 \delta_{ij} - (\mu + 2)(x_i - y_i)(x_j - y_j)\bigg\}.
\end{align*}
Next we calculate the following expressions:
\begin{align}
| \nabla \hat u| &=  A \mu r^{\mu} |x-y|^{-(\mu + 1)},\label{eq:u-hat-grad}\\
\Delta \hat u &=  \sum_{i=1}^n \hat u_{x_i x_i}
= A \mu r^{\mu} |x-y|^{-(\mu + 4)}  \sum_{i=1}^n \bigg\{ |x-y|^2 \delta_{ii} - (\mu + 2)(x_i - y_i)^2 \bigg\} \nonumber \\
&=  A \mu r^{\mu} |x-y|^{-(\mu + 4)} \bigg\{ n |x-y|^2 - (\mu + 2)|x-y|^2 \bigg\} \nonumber \\
&=  A \mu r^{\mu} |x-y|^{-(\mu + 2)} \bigg\{ n  - \mu - 2 \bigg\}.\nonumber
\end{align}
As $\sum_{i,j = 1}^n \delta_{ij} (x_i - y_i)  (x_j - y_j) = |x-y|^2$ and $\sum_{i,j = 1}^n  (x_i - y_i)^2  (x_j - y_j)^2 = |x-y|^4$
we also get that
\begin{align*}
\Delta_{\infty} \hat u &=  \sum_{i,j = 1}^n \hat u_{x_i x_j} \hat u_{x_i} \hat u_{x_j}\\
&= A^3 \mu^3 r^{3\mu} |x-y|^{-(3\mu + 8)}  \sum_{i,j = 1}^n \bigg\{ |x-y|^2 \delta_{ij} - (\mu + 2)(x_i - y_i)(x_j - y_j)\bigg\}  (x_i - y_i)  (x_j - y_j)\\
&= A^3 \mu^3 r^{3\mu} |x-y|^{-(3\mu + 8)}  \bigg\{ |x-y|^4 - (\mu + 2)|x-y|^4\bigg\}\\
&= A^3 \mu^3 r^{3\mu} |x-y|^{-(3\mu + 4)}  \bigg\{1 - \mu - 2 \bigg\}.
\end{align*}
Next, we collect the above expressions and substitute them into \eqref{eq:normalizedplap}. After division by $|\nabla \hat u|^2$ we obtain the following inequality
\begin{align}
\langle \nabla p, \nabla \hat u \rangle \log\left( |\nabla \hat u| \right) + (p(x)-2) A \mu r^{\mu} |x-y|^{-(\mu + 2)}  \big\{1 - \mu - 2 \big\}
 +  A \mu r^{\mu} |x-y|^{-(\mu + 2)} \big\{ n  - \mu - 2 \big\} \leq 0.\label{eq:upper3}
\end{align}
Use $\langle \nabla p, \nabla u \rangle =  A \mu r^{\mu} |x-y|^{-(\mu + 2)} \langle \nabla p,  (x - y) \rangle$ in order to simplify \eqref{eq:upper3}:
\begin{align*}
\langle \nabla p,  (x - y) \rangle \log\left( |\nabla \hat u| \right) - \mu (\p - 1) + n - \p \leq 0.
\end{align*}
This holds true if
\begin{align}\label{eq:normalizedplap_strong}
 \|\nabla p\|_{L^\infty} |x - y| |\log\left( |\nabla \hat u| \right)| - \mu (p^- - 1) + n - p^- \leq 0.
\end{align}
We now chose $\mu_* = \mu_*(p^-, n) $ so that if $\mu \geq \mu_*$ we have
\begin{align}\label{eq:choseofmu}
 - \mu (p^- - 1) + n - p^- \leq -1.
\end{align}
In fact, $\mu_*=\frac{n-p^-+1}{p^--1}$.

Next we need function $\hat u$ to satisfy $|\hat u(x) - \hat u(z)| =
M$ whenever $x \in \partial B(y, 2 r)$ and $z \in \partial B(y, r)$.
This implies that $A= M/(1 -2^{-\mu})$. Our next step is to find conditions for $r$ so
that the first term on the left-hand side of
\eqref{eq:normalizedplap_strong} does not exceed value $-1$. Since
$|x-y|\leq 2r$ it is enough to ensure that
\begin{align}\label{eq:choseofalphar}
 \|\nabla p\|_{L^\infty} |\log\left( |\nabla \hat u| \right)| 2 r \leq 1.
\end{align}
Then the proof will be completed by \eqref{eq:normalizedplap_strong}, \eqref{eq:choseofmu} and \eqref{eq:choseofalphar}. Hence it only remains to satisfy \eqref{eq:choseofalphar}.
We have
\begin{align*}
| \nabla \hat u| =  \frac{M}{1 -2^{-\mu}} \mu r^{\mu} |x-y|^{-(\mu +
1)} =  \frac{M}{1 -2^{-\mu}} \mu r^{-1} \left(
\frac{r}{|x-y|}\right)^{\mu + 1}.
\end{align*}
Since $r < |x-y| < 2 r$, it holds:
\begin{align*}
1<\frac{M}{1 -2^{-\mu}} \frac{\mu} r \frac{1}{2^{\mu + 1}} \leq | \nabla \hat u| \leq  \frac{M}{1 -2^{-\mu}} \frac{\mu} r.
\end{align*}
We choose $r$ so small that the left hand side is larger than one.
Such a requirement leads to condition that $r <
r_{**}:=\frac{M\mu}{2(2^\mu-1)}$ and thus $r_{**}$ depends on $M$
and $\mu_*$ and therefore on $M$, $p^-$ and $n$. Now $|\log\left(
|\nabla \hat u| \right)| \leq |\log\left( M \mu /
(1-2^{-\mu})\right)  - \log(r)| \leq |\log\left( M \mu /
(1-2^{-\mu})\right)|  + |\log(r)| $. As $\lim_{r\to 0^+}r|\log(r)|
\to 0$ we have
\begin{align}
\|\nabla p\|_{L^\infty} |\log\left( |\nabla \hat u| \right)| 2 r  \leq  \|\nabla p\|_{L^\infty}  2 r  \bigg\{ |\log\left( M \mu / (1-2^{-\mu})\right)|  + |\log(r)|\bigg\} \leq 1,\label{eq:upper4}
\end{align}
provided that $r \leq r_*$ is small enough. Indeed, if
$r|\log r|<1/(2\|\nabla p\|_{L^\infty})$ and $r<1/(4|\nabla p\|_{L^\infty}|\log (2^{\mu+1}r_{**})|)$, then \eqref{eq:upper4}
holds. In a consequence $r_*$ depends only on $M$, $\|\nabla
p\|_{L^\infty}$, $p^{-}$ and $n$. The last inequality completes the
proof of \eqref{eq:choseofalphar}. By taking $B:=-A$ it holds that
$\hat u$ satisfies the boundary value conditions.

 In order to show that $\check u$ is a $\px$-subsolution we proceed in the analogous way as in the second part of Lemma~\ref{le:comparisonfunctions_W}, cf. discussion between formulas \eqref{eq:subsol} and \eqref{eq:normalizedplap_strong1ny}.
 We define
\begin{align*}
\check u(x) = A \left( \frac{r}{|x - y|}\right)^{\mu} + B, \qquad \textrm{where} \quad r < |x - y| < 2r.
\end{align*}
 Similarly to computations for $\hat u$ we obtain that $\check u_{x_i}=-\hat u_{x_i}$, $\check u_{x_ix_j}=-\hat u_{x_ix_j}$. Hence $\Delta \check u=-\Delta \hat u$ and $\Delta_{\infty} \check u=-\Delta_{\infty} \hat u$. Upon collecting these expressions we use them in \eqref{eq:upper2} together with $\div \left( |\nabla \check u|^{p(x)-2} \nabla \check u \right) \geq 0$. In a consequence we arrive at the following inequality:
 \begin{align}
-\langle \nabla p, \nabla \check u \rangle \log\left( |\nabla \hat u| \right) - (p(x)-2) A \mu r^{\mu} |x-y|^{-(\mu + 2)}  \big\{1 - \mu - 2 \big\}
 -  A \mu r^{\mu} |x-y|^{-(\mu + 2)} \big\{ n  - \mu - 2 \big\} \geq 0.
\end{align}
This condition is the same as \eqref{eq:upper3}. Finally, we check
that assumptions $\check u(x) = 0$ whenever $x \in \partial B(y, r)$
and $\check u(x) = M$ whenever $x \in \partial B(y, 2r)$ imply $C =
-A$ and $D = -2^{-\mu}B$, where $A$ and $B$ are as in the definition
of supersolution $\hat u$. From these we infer that $|\nabla \check
u|=|\nabla \hat u|$ and hence the bounds for $\log(|\nabla\check
u|)$ are the same as in the case of supersolution. Thus, the proof
for $\px$-subsolutions, and for Lemma~\ref{le:comparisonfunctions},
is completed.
\end{proof}

\section{Upper and lower boundary growth estimates. The boundary Harnack inequality}\label{Sect-estimates}

\setcounter{theorem}{0}
\setcounter{equation}{0}

This section contains main result of the paper, namely the proof of the boundary Harnack inequality for positive $\px$-harmonic functions on domains satisfying the ball condition, see Theorem~\ref{thm:harnack}. The proof relies on Lemmas~\ref{le:lower} and \ref{le:upper}, where we show the lower- and, respectively, the upper estimates for a rate of decay of a $\px$-harmonic function close to a boundary of the underlying domain.
In particular, Lemmas~\ref{le:lower} and \ref{le:upper} imply stronger result than the usual boundary Harnack inequality. Namely, that a $\px$-harmonic function vanishes at the same rate as the distance function. Moreover, Lemma~\ref{le:lower} illustrates the following phenomenon: the geometry of the domain effects the sets of parameters on which the rate of decay depends.
Indeed, it turns out that constants in our lower estimate depend whether domain satisfies the interior ball condition
or the ball condition, cf. parts (i) and (ii) of Lemma~\ref{le:lower}.
As a corollary we also obtain a decay estimate for supersolutions
(a counterpart of Proposition~6.1 in Aikawa--Kilpel\"ainen--Shanmugalingam--Zhong~~\cite{AKSZ07}).

For $w\in \partial \Om$ we denote by $A_r(w)$ a point satisfying
$d(A_{r}(w), \partial \Omega) = r$ and $|A_{r}(w) - w| = r$.
Existence of such a point is guaranteed by the interior ball condition (with radius $r_i$) for $r \leq r_i/2$.
Recall also that by $c_H$ we denote the constant from the Harnack inequality, Lemma~\ref{harnack}.

\begin{lemma}[Lower estimates]\label{le:lower}
Let $\Omega \subset \R^n$ be a domain satisfying the interior ball
condition with radius $r_i$, $w \in \partial \Omega$ and $0 < r <
r_i$. Let $p$ be a bounded Lipschitz continuous variable exponent.
Assume that $u$ is a positive $\px$-harmonic function in
$\Omega \cap B(w, r)$ satisfying $u = 0$ on $\partial \Omega \cap
B(w,r)$. Then the following is true.

\begin{itemize}
\item[(i)]
There exist constants $c$ and $\tilde c$ such that if $\tilde r: = r / \tilde c$ then
\begin{align*}
c\, u(x) \geq  \frac{d(x, \bdry\Om)}{r} \qquad \text{for} \quad x
\in \Omega \cap B(w, \tilde r).
\end{align*}

The constant $c$ depends on $\inf_{\Gamma_{w,\tilde
r}} u, r_i$ and $p^+, p^-, n, \|\nabla p\|_{L^\infty}$, where $\Gamma_{w,\tilde
r} = \{x \in \Omega | \tilde r < d(x, \bdry\Om) < 3 \tilde r \} \cap
B(w,r)$. The constant $\tilde c$ depends only on $r_i$ and $p^-, \|\nabla
p\|_{L^\infty}$.
\end{itemize}
\noindent
Assume in addition that $\Omega$ satisfies the ball condition with radius $r_b$ and that $0 < r < r_b$.

\begin{itemize}
\item[(ii)]
Then there exist constants $c_L$ and $\tilde c_L$ such that if $\tilde r: = r / \tilde c_L$ then
\begin{align*}
c_L \, u(x) \geq \frac{d(x, \bdry\Om)}{r} \qquad \text{for} \quad x
\in \Omega \cap B(w, \tilde r).
\end{align*}

The constant $c_L$ depends on $\sup_{\Om\cap
B(w, r)} u, u(A_{2\tilde r}(w)), r_b$ and
$p^+, p^-, n, \|\nabla p\|_{L^\infty}$, while $\tilde c_L$ depends only on $r_b$
and $p^-, \|\nabla p\|_{L^\infty}$.
\end{itemize}

\end{lemma}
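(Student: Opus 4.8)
The plan is to use the interior ball condition to place, for each boundary point $w$, a ball $B(\eta^i, r_i)\subset\Omega$ touching $\partial\Omega$ at $w$, and then compare $u$ from below against a rescaled Wolanski-type (or Bauman-type) subsolution built on the annulus between two concentric spheres inside this ball. The point is that the distance function $d(\cdot,\partial\Omega)$ behaves, near $w$ and inside the osculating ball, comparably to a smooth radial expression, so a barrier of the form $\check u$ from Lemma~\ref{le:comparisonfunctions_W} (or Lemma~\ref{le:comparisonfunctions}) will dominate a multiple of $d(x,\partial\Omega)/r$ on the relevant annulus while being dominated by $u$ on its inner sphere.

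\medskip

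More precisely, for part (i) I would first fix $\tilde r = r/\tilde c$ with $\tilde c$ large (depending on $r_i$ and on the barrier radius $r_*$ from Lemma~\ref{le:comparisonfunctions_W}, hence on $p^-,\|\nabla p\|_{L^\infty}$, plus absolute geometric factors), and consider the center $\eta^i$ of the interior ball at $w$, so that the sphere $\partial B(\eta^i, r_i - 2\tilde r)$ lies in $\Omega$ and the annulus $B(\eta^i, r_i-\tilde r)\setminus B(\eta^i, r_i-2\tilde r)$ is contained in $\Omega\cap B(w,r)$ with controlled geometry. On the inner sphere $\partial B(\eta^i, r_i - 2\tilde r)$, every point has $d(\cdot,\partial\Omega)\ge \tilde r$ but also sits in a Harnack chain connected to a fixed point of $\Gamma_{w,\tilde r}$ of length bounded in terms of $r_i$; iterating Harnack's inequality (Lemma~\ref{harnack}) gives a lower bound $u\ge c_1(\inf_{\Gamma_{w,\tilde r}}u, \text{geometry})$ there, minus the additive $r$ nuisance inherent to the nonhomogeneous Harnack inequality, which we absorb by choosing the barrier's amplitude $M$ appropriately. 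We then take $\check u$ the Wolanski-type subsolution on this annulus, scaled so that $\check u \le u$ on the inner sphere and $\check u = 0$ on the outer sphere, and apply the comparison principle (Lemma~\ref{comparison_princ}) on the annulus to conclude $u\ge \check u$ throughout. Finally one checks by elementary estimates — using $r < r_i$ and that the annulus has width of order $\tilde r$ — that $\check u(x) \ge c_2\, d(x,\partial\Omega)/r$ on $\Omega\cap B(w,\tilde r)$, because inside the osculating ball $d(x,\partial\Omega) = r_i - |x-\eta^i|$ and the exponential profile of $\check u$ is linearly comparable to this quantity on a fixed annulus. Tracking the dependence of $M$ and $\mu$ on the barrier (via $\mu_*$ from Lemma~\ref{le:comparisonfunctions_W}, which depends on $p^+,p^-,n,\|\nabla p\|_{L^\infty},M$) yields the stated dependence of $c$ and $\tilde c$.

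\medskip

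For part (ii), the extra hypothesis is the full ball condition, giving also an exterior ball at $w$; this lets us additionally bound $u$ from above near $w$ (via the Carleson estimate of Theorem~\ref{Carleson}, or directly via the oscillation/H\"older estimates of Section~\ref{Sect-Carleson}, since a domain with the ball condition is NTA), so that we may calibrate the barrier amplitude $M$ in terms of $\sup_{\Omega\cap B(w,r)}u$ and the value $u(A_{2\tilde r}(w))$ rather than in terms of an infimum over $\Gamma_{w,\tilde r}$. The structure of the argument is the same: place the interior osculating ball, run a Harnack chain from $A_{2\tilde r}(w)$ to the inner sphere of the barrier annulus to get the lower bound on $u$ there, apply comparison with the subsolution, and read off the linear lower bound for $d(x,\partial\Omega)/r$. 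The reason the parameter lists in (i) and (ii) differ is exactly this choice of which ``anchor'' value of $u$ one uses to fix $M$, which in turn reflects which geometric hypothesis (interior ball alone, versus the two-sided ball condition) is available.

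\medskip

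The main obstacle I anticipate is bookkeeping the nonhomogeneity: the Harnack inequality and the barrier lemmas all carry additive $+r$ (resp.\ $+r'$) terms and constants depending on $\sup u$, so one must choose $\tilde r$ small enough and $M$ of the right size so that (a) the barrier exists on the annulus of width $\sim\tilde r$, which forces $\tilde r \le r_*$ and hence couples $\tilde c$ to $p^-,\|\nabla p\|_{L^\infty}$; (b) the additive error terms are dominated by the principal linear term $d(x,\partial\Omega)/r$ uniformly on $\Omega\cap B(w,\tilde r)$; and (c) the comparison on the annulus is legitimate, i.e.\ the boundary inequality $\check u \le u$ holds on \emph{both} components of the annulus boundary in the Sobolev sense. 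Getting all these to close simultaneously, while keeping the final constants' dependence exactly as claimed, is the delicate part; the geometric comparison of $d(x,\partial\Omega)$ with the radial barrier profile inside the osculating ball is, by contrast, elementary.
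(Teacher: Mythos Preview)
Your outline has the right spirit---place an annulus inside the interior osculating ball and compare $u$ against a barrier subsolution---but the specific annulus you choose cannot be used with Lemma~\ref{le:comparisonfunctions_W}. You take the annulus $B(\eta^i, r_i-\tilde r)\setminus B(\eta^i, r_i-2\tilde r)$, whose radii are of order $r_i$ and whose ratio of outer to inner radius is close to $1$, while the barrier lemma is stated only for annuli $B(y,2\rho)\setminus B(y,\rho)$ with $\rho\le r_*(p^-,\|\nabla p\|_{L^\infty})$. Since $r_i$ is a fixed geometric constant of the domain, there is no reason it should satisfy $r_i\lesssim r_*$, and even if it did, the annulus you wrote is not of the form the lemma allows. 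A second geometric issue is that you place the osculating ball at the fixed boundary point $w$; but a general $x\in\Omega\cap B(w,\tilde r)$ need not lie in that ball at all, so the comparison says nothing about such $x$.

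The paper fixes both problems simultaneously: for each $x\in\Omega\cap B(w,\tilde r)$ it takes the nearest boundary point $\eta$, the interior ball at $\eta$ with center $\eta^i$, and then the point $\eta^i_{2\tilde r}$ on the segment $[\eta,\eta^i]$ at distance $2\tilde r$ from $\partial\Omega$. The barrier is built on the small annulus $B(\eta^i_{2\tilde r},2\tilde r)\setminus B(\eta^i_{2\tilde r},\tilde r)$, which has the correct $2{:}1$ shape, radii $\le r_*$ once $\tilde r$ is chosen small, and automatically contains $x$. For part~(i) no Harnack chain is needed: the inner ball $B(\eta^i_{2\tilde r},\tilde r)$ sits inside $\Gamma_{w,\tilde r}$ by construction, so $u\ge\inf_{\Gamma_{w,\tilde r}}u$ there directly. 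This is why the constant $c$ in~(i) does \emph{not} depend on $\sup u$ (through $c_H$); your Harnack-chain argument would introduce exactly that unwanted dependence. For part~(ii) the paper does not invoke the Carleson estimate; the ball condition is used only to guarantee that the segment $[A_{2\tilde r}(w),\eta^i_{2\tilde r}]$ stays at distance $\ge\tilde r$ from $\partial\Omega$. The lower bound is then propagated from $A_{2\tilde r}(w)$ to $B(\eta^i_{2\tilde r},\tilde r)$ not by iterating Harnack (whose additive $+r$ term could swamp a small $u(A_{2\tilde r}(w))$) but by a chain of barrier subsolutions of fixed amplitude, after a single application of Harnack on a ball of radius $r_H\le u(A_{2\tilde r}(w))/(2c_H)$ to initialize.
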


\begin{proof}
To prove $(i)$, we start by applying Lemma \ref{le:comparisonfunctions_W} to obtain $r_*$,
depending only on $\|\nabla p\|_{L^\infty}, p^{-}$,
such that we can construct barriers in an annulus with radius less than $r_*$.
Assume $\tilde c$ to be so large that $\tilde r \leq \min \{ r_*, r/6 \}$ and note that so far $\tilde c \geq 6$
depends only on $\|\nabla p\|_{L^\infty}, p^{-}$ and $r_i$.

Let $x \in \Omega \cap B(w, \tilde r)$ be arbitrary. Then there
exists $\eta \in \partial \Omega$ such that $d(x, \bdry\Om) = |x -
\eta|$. By the interior ball condition at $\eta$ we find a point
$\eta^i$ such that $B(\eta^i, r_i) \subset \Omega$ and $\eta \in
\partial B(\eta^i, r_i)$. Take $\eta^i_{2 \tilde r} \in [\eta,
\eta^i]$ with $d(\eta^i_{2 \tilde r}, \bdry \Om) = 2 \tilde r$.
Since $\tilde r \leq r/6$ we have $B(\eta^i_{2 \tilde r}, 2\tilde r)
\subset \Omega \cap B(w, r)$. Thus, $u$ is a positive $\px$-harmonic
function in $B(\eta^i_{2\tilde r}, 2\tilde r)$.
Next we note that $B(\eta^i_{2\tilde r}, \tilde r) \subset \Gamma_{w,\tilde r}$ and since $u$ is continuous
\begin{align}\label{eq:note1}
u \geq \inf_{\Gamma_{w,\tilde r}} u > 0\qquad\hbox{in}\quad \Gamma_{w,\tilde r}.
\end{align}
Using \eqref{eq:note1} and Lemma \ref{le:comparisonfunctions_W} we
construct a subsolution $\check u$ in $B(\eta^i_{2\tilde r}, 2\tilde
r) \setminus B(\eta^i_{2\tilde r}, \tilde r)$ with boundary values
$\check u \equiv \inf_{\Gamma_{w,\tilde r}} u$ on $\partial
B(\eta^i_{2\tilde r}, \tilde r)$ and $\check u \equiv 0$ on
$\partial B(\eta^i_{2\tilde r}, 2\tilde r)$. Since $\check u \leq u$
on $\partial B(\eta^i_{2\tilde r}, \tilde r)$ and $0 = \check u \leq
u$ on $\partial B(\eta^i_{2\tilde r}, 2\tilde r)$, we obtain that
$\check u \leq u$ in $B(\eta^i_{2\tilde r}, 2\tilde r) \setminus
B(\eta^i_{2\tilde r}, \tilde r)$ by the comparison principle
(Lemma~\ref{comparison_princ}). By the above discussion $x \in
B(\eta^i_{2\tilde r}, 2\tilde r) \setminus B(\eta^i_{2\tilde r},
\tilde r)$ and the result will follow by showing that $\check u$
does not vanish faster than $d(x, \bdry\Om)$ as $x \to \partial
\Omega$. In order to show this we observe that the derivative of
$\check u$ in a direction normal to $\partial B(\eta^i_{2\tilde r},
2\tilde r)$ does not vanish. Indeed, using $\mu = \mu_*$ in Lemma
\ref{le:comparisonfunctions_W}, $\tilde r \leq |x-\eta^i_{2\tilde
r}| \leq 2\tilde r$ together with computations for $\nabla \check u$
in \eqref{eq:subsol} results in the following estimate:
\begin{equation}
\left|\left\langle \nabla \check u (x), \frac{x-\eta^i_{2\tilde r}}{|x-\eta^i_{2\tilde r}|}\right\rangle\right|
= \frac{2 \mu_*  \inf_{\Gamma_{w,\tilde r}} u }{ \tilde r^2} \frac{e^{-\mu_* \left( \frac{|x - \eta^i_{2\tilde r}|}{\tilde r}\right)^2 }}{e^{-\mu_*} - e^{-4\mu_*}} |x - \eta^i_{2\tilde r}| \geq \frac{2 \tilde c \mu_*  \inf_{\Gamma_{w,\tilde r}} u}{r}\frac{e^{-3\mu_*}}{1 - e^{-3\mu_*}} \geq  \frac1{c r}.
\label{eq:lower-normalder1}
\end{equation}
Since $\tilde c$ depends only on $r_i, \|\nabla p\|_{L^\infty}, p^{-}$,
while $\mu_*$ may depend on $ \inf_{\Gamma_{w,\tilde r}} u, \|\nabla p\|_{L^\infty}, p^{-}, p^{+}, n$,
the constant $c$ depends only on $ \inf_{\Gamma_{w,\tilde r}} u, r_i, \|\nabla p\|_{L^\infty}, p^{-}, p^{+}, n$.
Inequality \eqref{eq:lower-normalder1} completes the proof of $(i)$ in Lemma~\ref{le:lower}.

To prove $(ii)$, assume $\tilde c_L$ to be so large that $\tilde r = r/\tilde c_L  \leq \min \{ r_*, r/6 \}$ and note that now $\tilde c_L$ depends only on $\|\nabla p\|_{L^\infty}, p^{-}$ and $r_b$.
We proceed as in the former case but with \eqref{eq:note1} replaced by the following claim:
\begin{align}\label{eq:claim1}
u \geq \frac{1}{c_0} \qquad \textrm{on} \quad
\overline{B}(\eta^i_{2\tilde r}, \tilde r),
\end{align}
where $c_0$ depends only on $c_H, u(A_{2\tilde r}(w)), \|\nabla
p\|_{L^\infty}, r_b, p^{-}, p^{+}, n$, see details below and
Figure~\ref{fig:geometry}. In a consequence we obtain, instead of
\eqref{eq:lower-normalder1}, the following inequality:
\begin{equation}
\left|\left\langle \nabla \check u (x), \frac{x-\eta^i_{2\tilde r}}{|x-\eta^i_{2\tilde r}|}\right\rangle\right|
= \frac{2 \mu_*}{c_0 \tilde r^2} \frac{e^{-\mu_* \left( \frac{|x - \eta^i_{2\tilde r}|}{\tilde r}\right)^2 }}{e^{-\mu_*} - e^{-4\mu_*}} |x - \eta^i_{2\tilde r}| \geq \frac{2 \tilde c_L \mu_*}{ c_0  r}\frac{e^{-3\mu_*}}{1 - e^{-3\mu_*}} \geq  \frac1{c_L r}.
\label{eq:lower-normalder}
\end{equation}
Since $\tilde c_L$ depends only on $\|\nabla p\|_{L^\infty}, r_b, p^{-}$ and $c_0$, $\mu_*$ depend only on $c_H, u(A_{2\tilde r}(w)), r_b, \|\nabla p\|_{L^\infty}, p^{-}, p^{+}, n$,
inequality \eqref{eq:lower-normalder} completes the proof of Lemma~\ref{le:lower} under the assumed claim \eqref{eq:claim1}.

To prove claim \eqref{eq:claim1} we proceed as follows.
By using Harnack's inequality in $B(A_{2\tilde r}(w),r_H)$, for some $r_H$ to be chosen later,
we obtain (cf. Lemma~\ref{harnack})
\begin{align*}
\sup_{B(A_{2\tilde r}(w), r_H)} u \leq c_H \Big( \inf_{B(A_{2\tilde
r}(w), r_H)} u + r_H \Big)
\end{align*}
and so
$\frac{1}{c_H} u(A_{2\tilde r}(w)) - r_H \leq u(x)$ in $B(A_{2\tilde r}(w), r_H)$.
Assume $r_H$ so small that $r_H \leq \min\{\frac{1}{2 c_H} u(A_{2\tilde r}(w)), \tilde r\}$
and observe that then
\begin{align*}
\frac{1}{2 c_H} u(A_{2\tilde r}(w)) \leq u(x) \qquad \text{in} \quad B(A_{2\tilde r}(w), r_H).
\end{align*}
We now use Lemma \ref{le:comparisonfunctions_W} to find a subsolution $\check u$ in
$B(A_{2\tilde r}(w), 2 r_H) \setminus B(A_{2\tilde r}(w), r_H)$ satisfying $\check u \equiv \frac{1}{2 c_H} u(A_{2\tilde r}(w))$ on $\partial B(A_{2\tilde r}(w), r_H)$ and $\check u\equiv 0$ on $\partial B(A_{2\tilde r}(w), 2 r_H)$.
 The definition of $A_{2\tilde r}(w)$ and $r_H \leq \tilde r$ give us that $B(A_{2\tilde r}(w), 2 r_H) \subset \Omega\cap B(w,r)$.
By the comparison principle we obtain that $u\geq \check u$ in $B(A_{2\tilde r}(w), 2 r_H) \setminus  B(A_{2\tilde r}(w), r_H)$.
In particular, $ \frac{1}{2 c_H} u(A_{2\tilde r}(w)) \leq u$ and
\begin{align*}
\frac{1}{c_1}  \leq u(x) \qquad \text{in} \quad B(A_{2\tilde r}(w), 3/2\, r_H).
\end{align*}
Constant $c_1$ arises from computing $\check u$ for $x$ such that $|x-A_{2\tilde r}(w)|=\frac{3}{2}r_H$ (cf. the definition of $\check u$ in Lemma~\ref{le:comparisonfunctions_W}).
Furthermore, $c_1 > 2 c_H$ depends only on $c_H, u(A_{2\ti r}(w)), \|\nabla p\|_{L^\infty}, p^{-}, p^{+}, n$,
since $\mu_*$ in Lemma \ref{le:comparisonfunctions_W} depends only on these parameters.

We proceed by constructing a sequence of barrier functions and building a chain of balls joining points $A_{2\ti r}(w)$ and $\eta^i_{2\tilde r}$, where $\eta^i_{2\tilde r}$ is the same point as discussed in part (i) of the proof.
Using the ball condition we find that if $\tilde r$ is small enough, depending only on $r_b$, then
\begin{align*}
d([\eta^i_{2\tilde r}, A_{2\tilde r}(w)], \partial \Omega) \geq {\tilde r}.
\end{align*}
That such $\tilde r$ can be found follows from the argument similar
to the one presented in Sections 2 and 3 in
Aikawa--Kilpel\"ainen--Shanmugalingam--Zhong~\cite{AKSZ07} as
$\Omega$ is a $C^{1,1}$-domain, and thus the unit normal is
Lipschitz continuous.

Consider the subsolution in $B(y, 2r_H) \setminus B(y, r_H)$ for a $y \in [\eta^i_{2\tilde r}, A_{2\tilde r}(w)]$
with boundary values $\frac{1}{c_1}$ on $B(y, r_H)$ and $0$ on $B(y, 2 r_H)$.
Put $y$ as close as possible to point $\eta^i_{2\tilde r}$ under the restriction that
$B(y, r_H) \subset B(A_{2\tilde r}(w), 3/2 \,r_H)$.
By the comparison principle we then obtain that
\begin{align*}
\frac{1}{c_2}  \leq u(x) \qquad \text{in} \quad B(A_{2\tilde r}(w), 3/2\; r_H) \cup B(y, 3/2\; r_H)
\end{align*}
where $c_2 > c_1 > 2 c_H$ depends only on $c_H, u(A_{2\ti r}(w)), \|\nabla p\|_{L^\infty}, p^{-}, p^{+}, n$.
Proceeding in this way we obtain a chain of balls centered at points $y$ which, eventually, contain $\eta^i_{2\tilde r}$,
see Figure \ref{fig:geometry}.
Indeed, each ball adds distance $r_H/2$ to the length of chain,
and hence the number of balls needed to approach $\eta^i_{2\tilde r}$ depends only on $\tilde r / r_H$,
which in turn depends only on $c_H, u(A_{2\ti r}(w)), \|\nabla p\|_{L^\infty}, p^{-}$ and $r_b$.
We can proceed in the same way to cover $B(\eta^i_{2\tilde{r}}, \tilde r)$.
Hence we conclude the proof of \eqref{eq:claim1} and therefore the proof of Lemma \ref{le:lower}.
\begin{figure}[!hbt]
\begin{center}
\includegraphics[width=10.5cm,height=8cm,viewport=65 20 330 230,clip]{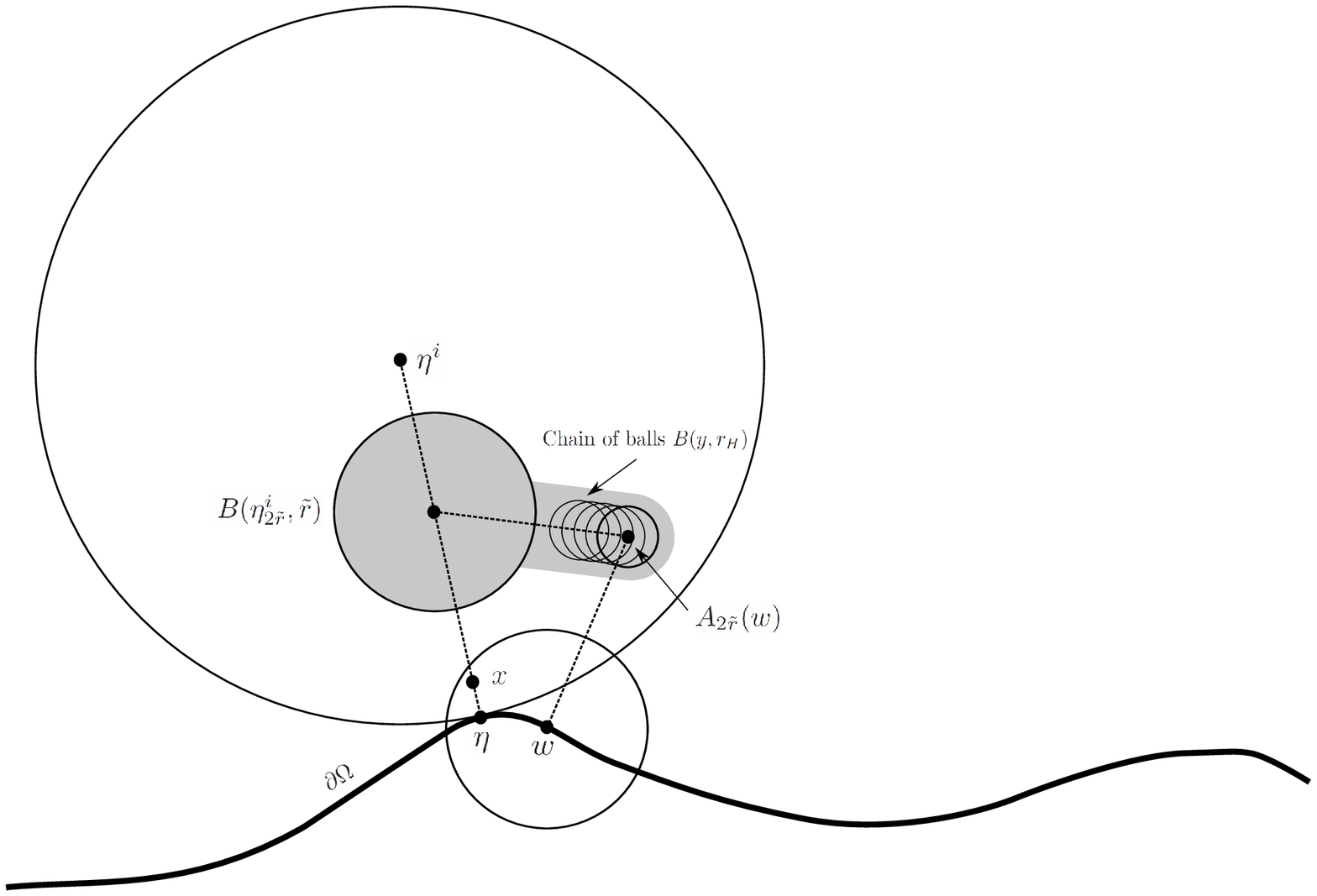}
\end{center}
\caption{The geometry in the proof of Claim \eqref{eq:claim1}.
The chain of annuli $B(y, 2r_H)\setminus B(y,r_H)$ covers the grey-shaded area.
Using this chain and associated subsolutions from Lemma \ref{le:comparisonfunctions_W}
we prove that $u \geq c_0^{-1}$ on $B(\eta^i_{2\tilde r}, \tilde r)$.}
\label{fig:geometry}
\end{figure}
\end{proof}

Denote $u^*$ the lsc-regularization of a supersolution $u$ (see e.g. Adamowicz-Bj\"orn-Bj\"orn~\cite[Theorem 3.5]{ABB13} and discussion therein).

\begin{cor}[cf. Proposition 6.1 in \cite{AKSZ07}]\label{cor:lower}
Let $\Om \subset \R^n$ be a bounded domain satisfying the interior
ball condition for $r_i$ and let $p$ be a bounded Lipschitz continuous variable exponent such that $p^{+}<n$. Furthermore, let
$u\geq 0$ be a supersolution in $\Om$. If there exists a point $w\in
\partial \Om$ such that
\begin{equation*}
 \liminf_{\Omega \ni y\to w} \frac{u^*(y)}{d(y, \bdry\Om)}\,=\,0,
\end{equation*}

then $u^*\equiv 0$ in $\Om$.
\end{cor}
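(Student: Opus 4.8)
The plan is to argue by contradiction, using Lemma~\ref{le:lower}(i) as the main engine. Suppose $u^*\not\equiv 0$ in $\Om$. Since $u\geq 0$ is a supersolution and its lsc-regularization $u^*$ agrees with $u$ a.e., we first want to produce an open set on which $u^*$ is bounded below by a positive constant, so that the hypothesis of Lemma~\ref{le:lower}(i) (namely $\inf_{\Gamma_{w,\tilde r}}u>0$) can be met near the bad point $w$. The difficulty is that Corollary~\ref{cor:lower} is stated for supersolutions, not for $\px$-harmonic functions, whereas Lemma~\ref{le:lower} applies to positive $\px$-harmonic functions vanishing on a boundary portion. So the first real step is a \emph{reduction to the harmonic case}: near $w$, solve the obstacle/Dirichlet problem in $\Om\cap B(w,r)$ with boundary data equal to $u^*$ on $\partial(\Om\cap B(w,r))$ but $0$ on $\partial\Om\cap B(w,r)$, obtaining a $\px$-harmonic function $h$ with $0\le h\le u^*$ in $\Om\cap B(w,r)$, $h=0$ continuously on $\partial\Om\cap B(w,r)$. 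By the comparison principle (Lemma~\ref{comparison_princ}), $h\le u^*$; hence it suffices to show $h\equiv 0$, since then $\liminf_{y\to w}u^*(y)/d(y,\partial\Om)\ge \liminf h(y)/d(y,\partial\Om)$, and more importantly one runs the argument to force $u^*$ itself to vanish. Actually the cleaner route: assume towards a contradiction that $u^*$ is not identically $0$; since supersolutions satisfy a strong minimum principle (if $u^*$ attains $0$ at an interior point it is constant, see the maximum/minimum principle following Lemma~\ref{comparison_princ}), either $u^*\equiv 0$ or $u^*>0$ everywhere in $\Om$; in the latter case $\inf$ over any compact subset is positive.

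So assume $u^*>0$ in $\Om$. Fix the boundary point $w$ from the hypothesis and a small $r<\min\{r_i,\operatorname{diam}\Om\}$. Let $h$ be $\px$-harmonic in $\Om\cap B(w,r)$, continuous on the closure, with $h=0$ on $\partial\Om\cap B(w,r)$ and $h=u^*$ (or a continuous minorant of it) on the remaining part of the boundary; since $u^*>0$ and is lsc, on the compact set $\overline\Om\cap\partial B(w,r)$ it is bounded below by a positive constant, so $h$ is positive $\px$-harmonic in $\Om\cap B(w,r)$ and $0\le h\le u^*$ by comparison. Now apply Lemma~\ref{le:lower}(i) to $h$: with $\tilde r=r/\tilde c$ we get a constant $c$, depending on $\inf_{\Gamma_{w,\tilde r}}h$, $r_i$, $p^\pm,n,\|\nabla p\|_{L^\infty}$, such that
\begin{align*}
c\,h(x)\ \ge\ \frac{d(x,\partial\Om)}{r}\qquad\text{for }x\in\Om\cap B(w,\tilde r).
\end{align*}
Since $\inf_{\Gamma_{w,\tilde r}}h>0$ (again because $h>0$ is continuous on the compact piece $\overline{\Gamma_{w,\tilde r}}$), $c$ is a finite positive constant. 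Hence for $x\in\Om\cap B(w,\tilde r)$,
\begin{align*}
\frac{u^*(x)}{d(x,\partial\Om)}\ \ge\ \frac{h(x)}{d(x,\partial\Om)}\ \ge\ \frac1{cr}\ >\ 0 ,
\end{align*}
so $\liminf_{\Om\ni y\to w}u^*(y)/d(y,\partial\Om)\ge 1/(cr)>0$, contradicting the assumption. Therefore $u^*\equiv 0$ in $\Om$.

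The technical point that needs care — and what I expect to be the main obstacle — is the solvability and comparison for the auxiliary Dirichlet problem defining $h$: one must check that $u^*$ (or a suitable continuous minorant) is admissible as boundary data in $W^{1,\px}\cap C$ sense on $\Om\cap B(w,r)$, that the resulting $h$ is genuinely $\px$-harmonic and continuous up to $\partial\Om\cap B(w,r)$ with value $0$ there (here one uses that the complement of $\Om$ is $\px$-fat near $w$ — which follows, via the interior/exterior geometry and the argument after Lemma~\ref{holderkont}, from $\Om$ being a $C^{1,1}$-domain, or at least that boundary regularity holds at $C^{1,1}$ points), and that $0\le h\le u^*$ follows from Lemma~\ref{comparison_princ}. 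An alternative that sidesteps constructing $h$ is to apply Lemma~\ref{le:lower}(i) more directly: one uses only that $u^*$ is a positive supersolution and that the barrier $\check u$ from Lemma~\ref{le:comparisonfunctions_W} is a \emph{subsolution}, so the comparison principle still gives $\check u\le u^*$ in the annulus $B(\eta^i_{2\tilde r},2\tilde r)\setminus B(\eta^i_{2\tilde r},\tilde r)$ provided $u^*\ge\check u$ on both boundary spheres — which holds since $u^*\ge 0$ on the outer sphere and $u^*\ge\inf_{\Gamma_{w,\tilde r}}u^*>0\ge$ the inner boundary value of $\check u$ after normalizing. Reading the proof of Lemma~\ref{le:lower}(i), it uses $u$ only through (a) positivity/continuity to get $\inf_{\Gamma}u>0$, and (b) the comparison $\check u\le u$; both survive verbatim with $u$ replaced by the supersolution $u^*$. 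This is the cleaner path and I would take it: no auxiliary harmonic function is needed, and the estimate $c\,u^*(x)\ge d(x,\partial\Om)/r$ on $\Om\cap B(w,\tilde r)$ holds directly, with $p^+<n$ ensuring we are in the regime covered by the barriers and fatness estimates; letting $y\to w$ contradicts the vanishing $\liminf$, so $u^*$ cannot be positive, and by the minimum principle $u^*\equiv 0$.
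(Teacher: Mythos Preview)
Your proposal is correct, and the ``cleaner path'' you settle on at the end is exactly the paper's own argument: invoke the strong minimum principle (Theorem~5.3 in \cite{HHLT}, which is where the Lipschitz condition on $p$ and $p^+<n$ enter) to get $u^*>0$ in $\Om$, then observe that the proof of Lemma~\ref{le:lower}(i) only uses the comparison $\check u\le u$ with the subsolution barrier, so it applies verbatim to the supersolution $u^*$ and yields $u^*(x)/d(x,\partial\Om)\ge 1/(cr)$ near $w$, contradicting the hypothesis. The detour through an auxiliary $\px$-harmonic $h$ is unnecessary (as you note), and the paper does not take it.
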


\begin{proof}
Suppose that $u^*\geq 0$ is a lsc-regularization of supersolution $u$ in $\Om$ such that $u^* \not\equiv 0$.
Since the Lipschitz continuity assumption on $\px$ implies the Dini type condition (see (5.1) in H\"ast\"o--Harjulehto--Latvala--Toivanen~\cite{HHLT}),
we can apply the strong minimum principle (see Theorem 5.3 in \cite{HHLT}) to obtain $u^*>0$ in $\Om$.
Hence
\begin{equation*}
m := \inf_{w \in \partial\Omega} \bigg( \inf_{\Gamma_{w,\tilde r}} u^*(x) \bigg) > 0,
\end{equation*}
where $\Gamma_{w,\tilde r}$ is as in Lemma \ref{le:lower} (i).
Clearly, the infimum is finite as well. Indeed, by Theorem~3.5 in
Adamowicz--Bj\"orn--Bj\"orn~\cite{ABB13} we have that $u^*$ is a
(quasicontinuous) supersolution. Furthermore, Corollary~4.7 in
Harjulehto--Kinnunen--Lukkari~\cite{HKL07} implies that the
$\px$-capacity of the polar set of $u^*$ is zero, i.e.
$C_{\px}(\{u^*=\infty\})=0$, see Definition~\ref{C-px-defn}, also
\cite{ABB13} and \cite{HKL07} for further discussion. Thus
$m<\infty$.

Since the comparison principle applies to supersolutions, we proceed as in the proof of Lemma \ref{le:lower} part $(i)$ to obtain
\begin{equation*}
 \frac{u^*(y)}{d(y, \bdry\Om)} \geq \frac{1}{c r} >0 \qquad \hbox{for all $x$ close enough to $\partial \Omega$},
\end{equation*}
where $c$ depends only on $m, r_i, \|\nabla p\|_{L^\infty}, p^{-}, p^{+}, n$.
Thus
\[
  \liminf_{\Omega \ni y\to w} \frac{u^*(y)}{d(x, \bdry\Om)}\,>\,0
\]
for all $w \in \partial \Omega$ and the corollary is proven.
\end{proof}

We now show the upper boundary growth estimates.

\begin{lemma}[Upper estimates]\label{le:upper}
Let $\Omega \subset \R^n$ be a domain satisfying the exterior ball
condition with radius $r_e$, $w \in \partial \Omega$ and $0 < r <
r_e$. Let $p$ be a bounded Lipschitz continuous
variable exponent.
Assume that $u$ is a positive $\px$-harmonic function in
$\Omega \cap B(w, r)$ satisfying $u = 0$ on $\partial \Omega \cap
B(w,r)$. Then there exist constants $c$ and $\tilde c$ such that if $\tilde r = r / \tilde c$ then
\begin{align*}
 u(x) \leq c \, \frac{d(x, \bdry\Om)}{r} \qquad \text{for} \quad x \in \Omega \cap B(w, \tilde r).
\end{align*}
The constant $c$ depends on
$\sup_{B(w,2 \ti r)\cap \Om} u, r_e$ and
$n, p^+, p^-, \|\nabla p\|_{L^\infty}$,
while $\tilde c$ depends on $r_e$ and $p^-, \|\nabla p\|_{L^\infty}$.
\end{lemma}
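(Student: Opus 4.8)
The plan is to mimic the upper-bound argument of Aikawa--Kilpel\"ainen--Shanmugalingam--Zhong~\cite{AKSZ07}: place a Wolanski-type supersolution barrier from Lemma~\ref{le:comparisonfunctions_W} in a small annulus anchored at an exterior ball and then invoke the comparison principle to dominate $u$. First I would fix $\tilde c$ so large that, writing $\tilde r = r/\tilde c$ and $s := \tilde r/5$, one has $s \le r_*$ (the radius from Lemma~\ref{le:comparisonfunctions_W}, which depends only on $p^-$ and $\|\nabla p\|_{L^\infty}$) and $2\tilde r \le r$; since $r < r_e$ this makes $\tilde c$ depend only on $r_e$, $p^-$ and $\|\nabla p\|_{L^\infty}$, as required. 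For $x \in \Om\cap B(w,\tilde r)$ with $d(x,\bdry\Om)\ge \tilde r/5$ the estimate is immediate, because $u(x)\le \sup_{B(w,2\tilde r)\cap\Om}u$ while $d(x,\bdry\Om)/r \ge 1/(5\tilde c)$; so it suffices to treat points with $d(x,\bdry\Om) < s$.

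For such an $x$ I would pick $\eta\in\bdry\Om$ with $|x-\eta| = d(x,\bdry\Om)$ and take the exterior ball $B(\eta^e,r_e)$ at $\eta$ from Definition~\ref{def:ball}. Since $\overline{\Om}\cap B(\eta^e,r_e)=\emptyset$, the point $z := \eta + \tfrac{s}{r_e}(\eta^e-\eta)$ on the segment $[\eta,\eta^e]$ satisfies $d(z,\bdry\Om) = |z-\eta| = s$; hence $\overline{B}(z,s)\cap\Om=\emptyset$, so $U := B(z,2s)\cap\Om$ lies inside the annulus $B(z,2s)\setminus B(z,s)$, and $s < |x-z|\le d(x,\bdry\Om)+s < 2s$, i.e.\ $x\in U$. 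A short triangle-inequality computation gives $|z-w| < 7\tilde r/5$, so $U\subset B(w,9\tilde r/5)\cap\Om \subset B(w,2\tilde r)\cap\Om\subset B(w,r)\cap\Om$, and thus $u$ is in particular a subsolution in $U$. I would then apply Lemma~\ref{le:comparisonfunctions_W} with centre $z$, radius $s$, height $M := \sup_{B(w,2\tilde r)\cap\Om}u$ and $\mu = \mu_*(p^+,p^-,n,\|\nabla p\|_{L^\infty},M)$ to obtain a $\px$-supersolution $\hat u$ in $B(z,2s)\setminus B(z,s)$ with $\hat u\equiv M$ on $\bdry B(z,2s)$ and $\hat u\equiv 0$ on $\bdry B(z,s)$.

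On $\bdry U$ one has $u\le\hat u$: on $\bdry\Om\cap\overline{B}(z,2s)\subset\bdry\Om\cap B(w,r)$ the function $u$ vanishes while $\hat u\ge 0$, and on $\bdry B(z,2s)\cap\overline{\Om}\subset\overline{B}(w,9\tilde r/5)\cap\overline{\Om}$ we have $u\le M = \hat u$ (using $9\tilde r/5<2\tilde r$). A routine $\eps$-shift (comparing $u$ with $\hat u+\eps$) promotes this to a strict Sobolev-sense inequality, so Lemma~\ref{comparison_princ} yields $u\le\hat u$ in $U$, in particular $u(x)\le\hat u(x)$. Since $\hat u$ is smooth and vanishes on $\bdry B(z,s)$, integrating $\nabla\hat u$ along the radial segment from $x$ to the nearest point of $\bdry B(z,s)$, and using the bound $|\nabla\hat u|\le 4M\mu_*/(s(1-e^{-3\mu_*}))$ on the annulus (from \eqref{eq:supergrad} together with $s<|x-z|<2s$), gives
\[
u(x)\le\hat u(x)\le \frac{4M\mu_*}{s(1-e^{-3\mu_*})}\,\bigl(|x-z|-s\bigr)\le \frac{4M\mu_*}{s(1-e^{-3\mu_*})}\,d(x,\bdry\Om),
\]
because $|x-z|-s\le d(x,\bdry\Om)$. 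As $s = r/(5\tilde c)$, this is the claimed inequality with $c = 20\mu_*\tilde c\,M/(1-e^{-3\mu_*})$, which depends on $\sup_{B(w,2\tilde r)\cap\Om}u$, on $r_e$ (through $\tilde c$) and on $n,p^+,p^-,\|\nabla p\|_{L^\infty}$; together with the far-point case this completes the proof.

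The main obstacle, as usual in this nonstandard-growth setting, is the bookkeeping of constants: $\mu_*$ — and hence $\|\nabla\hat u\|_{L^\infty}$ — depends on $M=\sup_{B(w,2\tilde r)\cap\Om}u$, which is exactly the $u$-dependence permitted by the statement, and keeping both the barrier and the relevant supremum of $u$ confined to $B(w,2\tilde r)$ is what forces the preliminary split at $d(x,\bdry\Om)\sim\tilde r$. It is also essential here to use the Wolanski-type barrier of Lemma~\ref{le:comparisonfunctions_W} rather than the Bauman-type barrier of Lemma~\ref{le:comparisonfunctions}, since only for the former does the admissible radius $r_*$, and hence $\tilde c$, avoid depending on $M$.
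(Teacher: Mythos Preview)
Your proposal is correct and follows essentially the same approach as the paper: place the Wolanski-type supersolution barrier from Lemma~\ref{le:comparisonfunctions_W} in an annulus anchored at an exterior ball, apply the comparison principle, and read off the linear decay from the radial gradient bound~\eqref{eq:supergrad}. The only organizational difference is that the paper centers the annulus at a point $\eta^e_{\tilde r}$ at distance $\tilde r$ from $\bdry\Om$ (so the annulus $B(\eta^e_{\tilde r},2\tilde r)\setminus B(\eta^e_{\tilde r},\tilde r)$ automatically contains every $x\in\Om\cap B(w,\tilde r)$, and no near/far case split is needed), whereas you use the smaller radius $s=\tilde r/5$ and handle points with $d(x,\bdry\Om)\ge s$ separately; the resulting constants and dependencies are the same.
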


\begin{proof}
 We first apply Lemma \ref{le:comparisonfunctions_W} to obtain radius $r_*$, depending
only on $\|\nabla p\|_{L^\infty}$ and $p^{-}$, such that we can
construct barriers in annulus with radius less than $r_*$. Assume
$\tilde c$ to be so large that $\tilde r \leq \min \{ r_*, r/5 \}$
and note that so far $\tilde c \geq 5$ depends only on $\|\nabla
p\|_{L^\infty}, p^{-}$ and $r_e$. Let $x \in \Omega \cap B(w, \tilde
r)$ be arbitrary. Then there exists $\eta \in \partial \Omega$ such
that $d(x, \bdry\Om) = |x - \eta|$. By the exterior ball condition
at $\eta$ we find a point $\eta^e$ such that $B(\eta^e, r_e) \subset
\R^n \setminus\Omega$ and $\eta \in \partial B(\eta^e, r_e)$. Take
$\eta^e_{ \tilde r} \in [\eta, \eta^e]$ with $d(\eta^e_{ \tilde r},
\bdry \Om) = \tilde r$. Since $\tilde r = r/5$ we have $B(\eta^e_{
\tilde r}, 2\tilde r)\cap\Omega \subset \Omega \cap B(w, r)$. We now
use Lemma~\ref{le:comparisonfunctions_W} with $M := \sup_{B(w,2\ti
r)\cap \Om} u$ to obtain a $\px$-supersolution $\hat u$ in the
annulus $B(\eta^e_{ \tilde r}, 2\tilde r)\setminus B(\eta^e_{ \tilde
r}, \tilde r)$ satisfying $\hat u \equiv 0$ on $\partial B(\eta^e_{
\tilde r}, \tilde r)$ and $\hat u \equiv \sup_{B(w,2\ti r)\cap \Om}
u$ on $\partial B(\eta^e_{ \tilde r}, 2\tilde r)$. By the comparison
principle in Lemma~\ref{comparison_princ}, we obtain $u \leq \hat u$
in $ \Omega \cap B(\eta^e_{ \tilde r}, 2\tilde r)$ and since $x$ is
in this set the result will follow by showing that $\hat u$ vanishes
at least as fast as $d(x, \bdry\Om)$ when $x \to \partial \Omega$.
Indeed, putting $\mu=\mu^*$, $\tilde r \leq |x-\eta^e_{\tilde r}|
\leq 2\tilde r$ together with computations for $\nabla \hat u$ in
\eqref{eq:supergrad} results in the following estimate:
\begin{equation}
\left|\left\langle \nabla \hat u (x), \frac{x-\eta^e_{\tilde r}}{|x-\eta^e_{\tilde r}|}\right\rangle\right|
= \frac{2 \mu_* \sup_{B(w,2\ti r)\cap \Om} u}{ \tilde r^2} \frac{e^{-\mu_* \left( \frac{|x - \eta^e_{\tilde r}|}{\tilde r}\right)^2 }}{e^{-\mu_*} - e^{-4\mu_*}} |x - \eta^e_{\tilde r}| \leq
\frac{4 \tilde c \mu_* \sup_{B(w,2 \ti r)\cap \Om} u}{r}\frac{1}{1 - e^{-3\mu_*}} \leq
\frac{c}{r}.
\label{eq:lower-normalder_upp}
\end{equation}
Since $\mu_*$ and $\tilde c$ bring dependence on $\sup_{B(w,2 \ti r)\cap \Om} u, r_e$ and
$\|\nabla p\|_{L^\infty}, p^{-}, p^{+}, n$,
we conclude that $c$ depends on the same set of parameters and thus inequality \eqref{eq:lower-normalder_upp}
completes the proof of 
Lemma~\ref{le:upper}.
\end{proof}

We are now in a position to state and prove the main result of the paper.
\begin{theorem}[Boundary Harnack inequality]\label{thm:harnack}
Let $\Omega \subset \R^n$ be a domain satisfying the ball condition
with radius $r_b$. Let $w \in \partial \Omega$, $0 < r < r_b$ and let $p$ be a bounded Lipschitz continuous variable exponent.
Assume that $u$ and $v$ are positive $\px$-harmonic functions in $\Omega \cap
B(w, r)$, satisfying $u = 0 = v$ on $\partial \Omega \cap B(w,r)$.
Then there exist constants $c$, $C$ and $\tilde c$ such that
\begin{align*}
(i)\quad \frac{1}{c} \frac{d(x, \bdry\Om)}{r} \leq u(x) \leq c
\frac{d(x, \bdry\Om)}{r}, \qquad (ii) \quad \frac{1}{C} \leq
\frac{u(x)}{v(x)} \leq C \qquad \text{for} \quad x \in \Omega \cap
B(w, r/\ti c).
\end{align*}
 The constant $\tilde c$ depends on $r_b$ and $p^-, ||\nabla p||_{L^{\infty}}$, constant $c$ depends on $n, p^+, p^-, ||\nabla p||_{L^{\infty}}$, $\sup_{B(w, r)\cap \Om} u, u(A_{2\tilde r}(w))$ and $r_b$,
while $C$ depends on the same parameters as $c$ and also on $v(A_{2\tilde r}(w))$ and $\sup_{B(w, r)\cap \Om} v$.
\end{theorem}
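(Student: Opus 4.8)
The plan is to derive Theorem~\ref{thm:harnack} directly from the two one-sided boundary growth estimates, Lemma~\ref{le:upper} and Lemma~\ref{le:lower}(ii), using only the elementary fact that the ball condition with radius $r_b$ is precisely the conjunction of the interior and the exterior ball conditions with radius $r_b$. Part~(i) will follow by intersecting the two balls on which the lower and the upper estimates are valid; part~(ii) is then an immediate consequence obtained by dividing the two-sided estimate for $u$ by that for $v$.

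For part~(i), I would first invoke Lemma~\ref{le:upper} with $r_e=r_b$ to obtain constants $c_U$ and $\ti c_U$, where $\ti c_U$ depends only on $r_b$ and $p^-,\|\nabla p\|_{L^\infty}$ and $c_U$ depends on $n,p^+,p^-,\|\nabla p\|_{L^\infty}$, $r_b$ and $\sup_{B(w,2\ti r)\cap\Om}u$, such that $u(x)\leq c_U\,d(x,\bdry\Om)/r$ on $\Om\cap B(w,r/\ti c_U)$. Next I would invoke Lemma~\ref{le:lower}(ii), which is licensed because $\Om$ satisfies the ball condition, to obtain constants $c_L$ and $\ti c_L$, where $\ti c_L$ again depends only on $r_b$ and $p^-,\|\nabla p\|_{L^\infty}$ and $c_L$ depends in addition on $\sup_{\Om\cap B(w,r)}u$ and $u(A_{2\ti r}(w))$, such that $c_L\,u(x)\geq d(x,\bdry\Om)/r$ on $\Om\cap B(w,r/\ti c_L)$. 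Putting $\ti c:=\max\{\ti c_L,\ti c_U\}$ and $c:=\max\{c_L,c_U\}$, both inequalities hold simultaneously on $\Om\cap B(w,r/\ti c)$, which is exactly statement~(i); since $2\ti r\leq r$ one has $\sup_{B(w,2\ti r)\cap\Om}u\leq\sup_{B(w,r)\cap\Om}u$, so the growth constant may be taken to depend on $\sup_{B(w,r)\cap\Om}u$, $u(A_{2\ti r}(w))$, $r_b$ and $n,p^+,p^-,\|\nabla p\|_{L^\infty}$, exactly as asserted.

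For part~(ii), fix $x\in\Om\cap B(w,r/\ti c)$. Since $v$ satisfies the same hypotheses as $u$, part~(i) applied to $v$ yields constants $c_v,\ti c_v$ with the analogous dependences; after enlarging $\ti c$ so that it also exceeds $\ti c_v$, on $\Om\cap B(w,r/\ti c)$ we simultaneously have $c^{-1}d(x,\bdry\Om)/r\leq u(x)\leq c\,d(x,\bdry\Om)/r$ and $c_v^{-1}d(x,\bdry\Om)/r\leq v(x)\leq c_v\,d(x,\bdry\Om)/r$. Because $d(x,\bdry\Om)>0$ for $x\in\Om$ and $v>0$, dividing the two chains gives $\tfrac1{c\,c_v}\leq u(x)/v(x)\leq c\,c_v$, so~(ii) holds with $C:=c\,c_v$; this $C$ depends on the parameters entering $c$ together with $v(A_{2\ti r}(w))$ and $\sup_{B(w,r)\cap\Om}v$, as claimed.

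Since all the analytic substance — the Wolanski-type barriers of Section~\ref{Sect-barriers}, the comparison principle, the iterated Harnack inequality, and the geometric chaining exploiting the $C^{1,1}$-regularity of $\bdry\Om$ — has already been absorbed into Lemmas~\ref{le:lower} and~\ref{le:upper}, the proof of Theorem~\ref{thm:harnack} itself is essentially formal. I do not expect a genuine obstacle; the only point demanding care is the bookkeeping of constants: one must verify that the two radii $\ti r=r/\ti c$ appearing in the two lemmas can be matched simply by passing to the largest dilation constant, and that $\sup_{B(w,r)\cap\Om}u$ together with $u(A_{2\ti r}(w))$ (and their $v$-counterparts) are the only features of the solutions entering the final constants — a solution-dependence which, as stressed in the introduction, is intrinsic to the variable exponent setting and cannot be eliminated.
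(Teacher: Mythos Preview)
Your proposal is correct and follows essentially the same approach as the paper: the authors' proof simply notes that $\sup_{B(w,2\tilde r)\cap\Om}u$ in Lemma~\ref{le:upper} can be replaced by $\sup_{B(w,r)\cap\Om}u$ and then states that Lemmas~\ref{le:lower} and~\ref{le:upper} immediately imply the assertion. Your write-up is in fact more detailed than the paper's two-sentence proof, making explicit the maximum of the two dilation constants and the division argument for part~(ii), but the underlying strategy is identical.
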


\begin{rem}
 Conclusion $(ii)$ in Theorem \ref{thm:harnack} is sometimes formulated in the following form
(also referred to as a boundary Harnack inequality):
For all points $x, y \in \Omega \cap B(w, r/\ti c)$ it holds that
 \[
   \frac{1}{{C}^2}\leq \frac{u(x)}{v(x)} \frac{v(y)}{u(y)}\leq {C}^2,
 \]
where $\tilde c$ and $C$ are the constants from Theorem \ref{thm:harnack}.
\end{rem}

\begin{proof}[Proof of Theorem~\ref{thm:harnack}]
 We observe that $\sup_{B(w,2 \ti r)\cap \Om} u$ in Lemma
\ref{le:upper} can be replaced by $\sup_{B(w, r)\cap \Om} u$. Then
Lemma \ref{le:lower} and Lemma \ref{le:upper} immediately imply the
assertion of the theorem.
\end{proof}

\section{$\px$-Harmonic measure}\label{Sect-measures}

\setcounter{theorem}{0}
\setcounter{equation}{0}

In this section we study $\px$-harmonic measures. In Lemma~\ref{p(x)measure_exists} we show the existence of a
$\px$-harmonic measure and in Theorem \ref{p(x)measure} we provide our main results of this section: lower- and upper- growth estimates for such measures. Finally, using these growth estimates and the Carleson estimate (Theorem \ref{Carleson}) we conclude in Corollary \ref{cor:doubling}
a weak doubling property of the $\px$-harmonic measure.
Let us now explain motivations for our studies.

Harmonic measures were employed to prove a Boundary Harnack
inequality in the setting of harmonic functions, see Dahlberg
\cite{D77} and Jerison--Kenig \cite{JK82}. When studying boundary
behavior of $p$-harmonic type functions, various versions of
generalizations of harmonic measures have been introduced and
studied for $p \not = 2$, see e.g.
Llorente--Manfredi--Wu~\cite{lmw}. In the case of constant $p$ ($p
\not = 2$) Bennewitz and Lewis employed the doubling property of a
$p$-harmonic measure, first proved in Eremenko--Lewis~\cite{EL91},
to obtain a Boundary Harnack inequality for $p$-harmonic functions
in the plane, see Bennewitz--Lewis~\cite{BL}. This result has been
generalized to the setting of Aronsson-type equations by
Lewis--Nystr\"om~\cite{LN08} and Lundstr\"om--Nystr\"om~\cite{LN11}.
The $p$-harmonic measure, defined as in the aforementioned papers,
as well as Boundary Harnack inequalities, have played a significant role when
studying free boundary problems, see for example Lewis--Nystr\"om
\cite{LN12}. The $p$-harmonic measure was also used to find the
optimal H\"older exponent of $p$-harmonic functions vanishing near
the boundary, see Kilpel\"ainen--Zhong~\cite{KZ01} and
Lundstr\"om~\cite{N11}.
Moreover, a work of Peres--Sheffield~\cite{ps} provides discussion of connections
between $p$-harmonic measures, defined in a different way though, and tug-of-war games.
As for the equations with nonstandard growth we mention paper by Lukkari--Maeda--Marola~\cite{lmm}, where some upper estimates for $\px$-harmonic measures were studied in the context of Wolff potentials.

To prove our results concerning $\px$-measures we begin by stating a Caccioppoli-type estimate.

\begin{lemma}[Caccioppoli-type estimate]\label{energy}
Let $\Om\subset\R^{n}$, $p$ be a bounded log-H\"older continuous variable exponent and assume that $\eta \in C_0^{\infty}(\Om)$ with $0 \leq \eta \leq 1$.
If $u$ is a $\px$-subsolution in $\Omega$, then
\begin{equation*}
\int_{\Om}|\nabla u|^{\p} \eta^{p^+} dx \leq c\, \int_{\Om}|u|^{\p} |\nabla \eta|^{\p} dx,
\end{equation*}
where $c=c(p^+)$.
\end{lemma}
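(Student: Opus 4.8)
The plan is the classical test‑function/absorption argument for a Caccioppoli inequality, carried out with the variable‑exponent‑adapted cut‑off power $\eta^{p^+}$. First I would test the subsolution inequality \eqref{eq-sub-sol} with $\phi = u\,\eta^{p^+}$. This is admissible: since $u\in W^{1,\px}_{\mathrm{loc}}(\Om)$ and $\eta\in C_0^\infty(\Om)$, the power $\eta^{p^+}$ is again a smooth compactly supported cut‑off, so $\phi\in W^{1,\px}(\Om)$ with support in the compact set where $\eta$ lives, and by the usual density argument it may be inserted into \eqref{eq-sub-sol}; here $\phi\ge 0$ because in all the applications (the $\px$-harmonic measure, nonnegative supersolutions) we have $u\ge 0$, so the sign restriction in Definition~\ref{px-defn} is respected. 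Using the product rule $\nabla\phi = \eta^{p^+}\nabla u + p^+ u\,\eta^{p^+-1}\nabla\eta$ in \eqref{eq-sub-sol}, moving the nonnegative term $\int_\Om\eta^{p^+}|\nabla u|^{\p}$ to the left and estimating $|\nabla u|^{\p-2}\nabla u\cdot\nabla\eta\le|\nabla u|^{\p-1}|\nabla\eta|$ on the remaining term, one arrives at
\[
\int_\Om \eta^{p^+}|\nabla u|^{\p}\,dx \;\le\; p^+\int_\Om |u|\,\eta^{p^+-1}|\nabla u|^{\p-1}|\nabla\eta|\,dx .
\]

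The second step is a pointwise Young inequality applied to the integrand on the right, split as $\big(\eta^{p^+-1}|\nabla u|^{\p-1}\big)\cdot\big(|u|\,|\nabla\eta|\big)$ with the conjugate exponents $\tfrac{\p}{\p-1}$ and $\p$ (both lying in a fixed compact subinterval of $(1,\infty)$ since $1<p^-\le\p\le p^+<\infty$), giving, for every $\varepsilon>0$,
\[
|u|\,\eta^{p^+-1}|\nabla u|^{\p-1}|\nabla\eta| \;\le\; \varepsilon\,\eta^{(p^+-1)\frac{\p}{\p-1}}|\nabla u|^{\p} \;+\; C(\varepsilon)\,|u|^{\p}|\nabla\eta|^{\p}.
\]
The point that is special to the variable exponent is the elementary observation that, because $0\le\eta\le1$ and $(p^+-1)\tfrac{\p}{\p-1}\ge p^+$ — an inequality that is exactly equivalent to $\p\le p^+$ — one may lower the first exponent of $\eta$ down to $p^+$, i.e. $\eta^{(p^+-1)\p/(\p-1)}\le\eta^{p^+}$. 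Thus the "bad" term now carries precisely the factor $\eta^{p^+}$ appearing on the left-hand side of the first-step inequality, so after integrating and choosing $\varepsilon$ with $p^+\varepsilon\le\tfrac12$ we absorb $p^+\varepsilon\int_\Om\eta^{p^+}|\nabla u|^{\p}$ into the left-hand side and obtain
\[
\int_\Om \eta^{p^+}|\nabla u|^{\p}\,dx \;\le\; c\int_\Om |u|^{\p}|\nabla\eta|^{\p}\,dx ,
\]
with $c$ depending only on $p^+$ (equivalently, on the fixed range of $\p$ and of its conjugate), which is the assertion.

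I do not expect a genuine obstacle here: there is no nonlinearity to fight beyond the standard Young/absorption scheme, every integral is over the fixed compact set $\operatorname{supp}\eta$ and is finite because $u\in W^{1,\px}_{\mathrm{loc}}(\Om)$, and no delicate regularity theory is needed. The only point requiring a little care is the exponent bookkeeping described above — choosing the cut-off power $\eta^{p^+}$ rather than the more familiar $\eta^2$ (which would force $\p\le 2$) or $\eta^{\p}$ (whose differentiation would introduce unwanted $\nabla p\cdot\log\eta$ terms), and verifying that the power of $\eta$ produced by Young's inequality is at least $p^+$ so that the absorption step is legitimate.
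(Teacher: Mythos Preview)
Your proposal is correct and follows exactly the approach indicated in the paper, namely testing \eqref{eq-sub-sol} with $\phi=u\,\eta^{p^+}$ and then running the standard Young/absorption argument; the paper merely records this in one sentence and refers to \cite{HHL08}, whereas you spell out the exponent bookkeeping (in particular the key inequality $(p^+-1)\tfrac{\p}{\p-1}\ge p^+$, equivalent to $\p\le p^+$, which is what makes the cut-off power $\eta^{p^+}$ the right choice). Your remark that the admissibility of $\phi\ge 0$ relies on $u\ge 0$ is well taken---the lemma as stated omits this hypothesis, but in every application in Section~\ref{Sect-measures} the function $u$ is indeed nonnegative, so the argument goes through.
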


\begin{proof}
The proof goes the same lines as for the $p=const$ case, namely one uses in \eqref{eq-sub-sol} a test function $\phi=u\eta^{p^+}$, cf. Lemma~5.3 in Harjulehto--H\"ast\"o--Latvala~\cite{HHL08} for the proof of Caccioppoli estimate in the case of slightly modified $\px$-Laplace operator $\div(\px|\nabla u|^{\px-2}\nabla u)$).
\end{proof}

The following existence lemma is probably known to experts in the variable exponent analysis,
but to our best knowledge have not appeared earlier in the literature.
Therefore, we include its proof for the readers convenience.

\begin{lemma}\label{p(x)measure_exists}
Assume that $\Omega\subset\R^{n}$, $w \in\partial\Omega$, $0 < r <
\infty$ and let $p$ be a bounded log-H\"older continuous variable exponent.
Suppose that $u$ is a positive $\px$-harmonic function in
$\Omega \cap B(w,2r)$, continuous on $\bar \Omega \cap B(w,2r)$ with
$u \equiv 0$ on $\partial \Omega \cap B(w,2r)$. Extend $u$ to $B(w, 2r)$ by defining
$u \equiv 0$ on $B(w, 2r)\backslash \Omega$.
Then there exists a unique finite positive Borel measure $\mu$ on $\R^n$,
with support in $\partial \Omega \cap B(w,r)$, such that
whenever $\psi \in C_0^{\infty}(B(w,r))$ then
\begin{align}\label{p(x)measure_exists:assert}
\int \limits_{\R^n} \langle  |\nabla u|^{p(x)-2} \nabla u, \nabla
\psi \rangle dx = -  \int \limits_{\R^n} \psi d \mu.
\end{align}
\end{lemma}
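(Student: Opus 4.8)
The plan is to construct the measure $\mu$ as the distributional $\px$-Laplacian of $u$ (extended by zero), to check it is a nonnegative distribution hence a Radon measure, then to establish finiteness and the support property, and finally uniqueness. First I would define, for $\psi\in C_0^\infty(B(w,r))$, the linear functional
\[
\Lambda(\psi) := \int_{\R^n} \langle |\nabla u|^{p(x)-2}\nabla u, \nabla\psi\rangle\, dx.
\]
Since $u\in W^{1,\px}_{\loc}(\Om\cap B(w,2r))$ and $u\equiv 0$ on $B(w,2r)\setminus\Om$, one should first argue that the extended $u$ lies in $W^{1,\px}_{\loc}(B(w,2r))$ with $\nabla u = 0$ a.e. on the complement of $\Om$; this uses that $u$ vanishes continuously on $\partial\Om\cap B(w,2r)$ together with a standard gluing lemma for Sobolev functions. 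Then $|\nabla u|^{p(x)-2}\nabla u \in L^{\pdx}_{\loc}$ by \eqref{ineq:norm-mod}, so $\Lambda$ is well defined on $C_0^\infty(B(w,r))$ and is a distribution of order at most $1$.

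Next I would show $\Lambda(\psi)\le 0$ whenever $\psi\ge 0$, which by the Riesz representation theorem gives a nonnegative Radon measure $\mu$ with $\Lambda(\psi) = -\int\psi\,d\mu$. Since $u$ is $\px$-harmonic in the open set $\Om\cap B(w,2r)$, for test functions supported there $\Lambda(\psi)=0$; for test functions supported in the interior of $B(w,2r)\setminus\overline\Om$ we have $\nabla u\equiv 0$ so again $\Lambda(\psi)=0$. Hence $\mu$ is supported in $\partial\Om\cap B(w,r)$. For the sign: $u$ is a nonnegative $\px$-supersolution on $B(w,r)$ (being $\px$-harmonic in $\Om$ and identically zero, hence a supersolution, outside $\Om$ — here one invokes that the zero function is trivially a supersolution and that the minimum of two supersolutions, or a suitable pasting, remains a supersolution; alternatively one checks the supersolution inequality directly across $\partial\Om$ using $u\ge 0 = u|_{\Om^c}$). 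Being a supersolution means exactly $\Lambda(\psi)\le 0$ for $\psi\ge 0$, $\psi\in C_0^\infty(B(w,r))$, which is what we need.

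For finiteness of $\mu$, I would fix a cutoff $\eta\in C_0^\infty(B(w,r))$ with $0\le\eta\le 1$ and $\eta\equiv 1$ on $B(w,r/2)$, and estimate $\mu(B(w,r/2)) \le \int \eta\,d\mu = -\Lambda(\eta) = -\int\langle|\nabla u|^{p(x)-2}\nabla u,\nabla\eta\rangle\,dx$, which is bounded by $2\||\nabla u|^{p(x)-1}\|_{L^{\pdx}(\mathrm{supp}\,\nabla\eta)}\|\nabla\eta\|_{L^{\px}}$ via the variable exponent Hölder inequality \eqref{ineq:Holder}; the first factor is finite because $\nabla u\in L^{\px}_{\loc}$ on the compact set $\mathrm{supp}\,\nabla\eta \Subset B(w,2r)$ (using local $C^{1,\alpha}$-regularity of $\px$-harmonic functions away from $\partial\Om$ and the Caccioppoli estimate, Lemma~\ref{energy}, near $\partial\Om$). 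A covering of $\partial\Om\cap B(w,r)$ by finitely many such half-balls yields $\mu(\R^n)<\infty$. Uniqueness is immediate: if $\mu_1,\mu_2$ both satisfy \eqref{p(x)measure_exists:assert}, then $\int\psi\,d(\mu_1-\mu_2)=0$ for all $\psi\in C_0^\infty(B(w,r))$, and since both are supported in the compact set $\partial\Om\cap B(w,r)\subset B(w,r)$, density of $C_0^\infty$ forces $\mu_1=\mu_2$.

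The main obstacle I expect is the careful justification that the zero-extended $u$ is a nonnegative $\px$-supersolution on the full ball $B(w,r)$ — i.e.\ handling the behavior across $\partial\Om$. This requires knowing $u\in W^{1,\px}_{\loc}(B(w,r))$ after extension (a pasting lemma, where continuity of $u$ up to $\partial\Om$ with boundary value $0$ is essential) and then verifying the supersolution inequality; the cleanest route is probably to note that $\min(u,\varepsilon)$ behaves well and pass to a limit, or to appeal directly to the fact that a function which is a supersolution in $\Om$, a supersolution in the complement's interior, and lsc with the right boundary values is a supersolution in the union — a pasting result for variable exponent supersolutions (cf.\ the comparison/pasting machinery in Adamowicz--Bj\"orn--Bj\"orn~\cite{ABB13}). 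Everything else is a routine combination of Hölder's inequality \eqref{ineq:Holder}, the Caccioppoli estimate Lemma~\ref{energy}, interior regularity, and Riesz representation.
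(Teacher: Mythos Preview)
Your overall architecture matches the paper's: show the extension lies in $W^{1,\px}_{\loc}(B(w,r))$, verify the sign of $\Lambda$ on nonnegative test functions, invoke Riesz representation, then read off support and finiteness via H\"older and Caccioppoli. However, there is a genuine sign error at the core step. For $\mu$ to be positive in \eqref{p(x)measure_exists:assert} you need $\Lambda(\psi)\le 0$ for $\psi\ge 0$, which by Definition~\ref{px-defn} is the \emph{subsolution} inequality, not the supersolution one. And indeed the zero-extension of a positive $\px$-harmonic function is a subsolution, not a supersolution: already in one dimension $u(x)=x^+$ has $u''=\delta_0\ge 0$. Your proposed justification (``minimum of two supersolutions, or a suitable pasting, remains a supersolution'') therefore points in the wrong direction; the minimum of $u$ and $0$ is identically $0$, which tells you nothing.

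The paper handles this by proving directly that the extension is a subsolution, testing with $\psi_1=[(\delta+\max\{u-\eps,0\})^{\eps}-\delta^{\eps}]\psi$ and following the $A$-harmonic argument of Lundstr\"om--Nystr\"om~\cite[Lemma~2.2]{LN11}. If you want to salvage a pasting route, you would need the subsolution version: with $v\equiv 0$ a subsolution in $B(w,r)$ and $u$ a subsolution in $\Om\cap B(w,r)$ with $u=v$ on $\partial\Om\cap B(w,r)$ in the Sobolev sense, the function equal to $\max(u,v)=u$ in $\Om$ and $v=0$ outside is a subsolution in $B(w,r)$. That does give the right conclusion, but it is not what you wrote, and you would still need to supply or cite such a pasting lemma in the variable exponent setting. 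The remaining parts of your sketch (Sobolev membership of the extension, support, finiteness via \eqref{ineq:Holder} and Lemma~\ref{energy}, uniqueness) are fine and agree with the paper.
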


\begin{proof}
We first prove that the extended function is a subsolution in $B(w,r)$.
To do so, we begin by showing that the extension, denoted
by $U$, belongs to $W^{1, \px}_{loc}(B(w,r))$.
It is immediate that $U$ belongs to $L^{\px}(B(w, r))$ and that $\nabla U\in
L^{\px}(B(w, r))$. To conclude that $U\in W^{1, \px}_{loc}(B(w,
r))$ it remains to show that $U\in W^{1, \px}_{loc}(B_R)$ for any
ball $B_R \Subset B(w,r)$, which in turn boils down to showing that
$\nabla u$ is the distributional gradient of $U$ in $B_R$. Indeed, let $\eta\in C^\infty_0(B_R)$ be arbitrary and let $\phi \in C^\infty_0((\Om \cap B_R)\cup \rm{supp}\,\eta)$ be such
that $0\leq \phi \leq 1$ and $\phi\equiv 1$ on $\Om\cap B_R\cap
\rm{supp}\, \eta$. Then $\eta \phi \in C_{0}^{\infty}((\Om\cap B_R)\cup \rm{supp}\,\eta)$. Since $\nabla u$ is the distributional gradient of $u$ in
$\Om\cap B(w, r)$ and $u\equiv 0$ in $B_R\setminus \Om$, we have
\begin{align*}
0&=\int_{(\Om\cap B_R)\cup \,\rm{supp}\, \eta} (u\nabla(\eta\phi) + \eta\phi\nabla u)\,dx=\int_{\Om\cap B_R} (u\nabla(\eta\phi) + \eta\phi\nabla u)\,dx \\
&=\int_{B_R} u\eta\nabla\phi\,dx + \int_{\Om \cap B_R \cap\,\rm{supp}\, \eta} \phi
(U\nabla\eta + \eta\nabla u)\,dx.
\end{align*}
The first integral in the right-hand side is zero, $U=0$ in $B_R\setminus \Om$ and hence, $\nabla
u$ is the distributional gradient of $U$ in $B_R$, and $U \in
W^{1, \px}_{loc}(B(w,r))$. To this end, for the sake of simplicity of
notation, denote $u=U$.

Next, we show that if $\psi \in C_0^{\infty}(B(w,r))$ and $\psi \geq 0$,
then
\begin{align}\label{eq:existence_measure1}
 \int \limits_{\R^n} \langle  |\nabla u|^{p(x)-2} \nabla u, \nabla \psi \rangle dx \leq 0.
\end{align}
To prove \eqref{eq:existence_measure1}, define
\[
\psi_1:=[(\delta+\max\{u-\eps, 0\})^{\eps}-\delta^\eps]\psi.
\]
Assume that $\eps, \delta>0$ are small enough, so that $\psi_1$ is
an admissible test function for Definition \ref{px-defn}. Then we
follow the steps of Lemma 2.2 in Lundstr\"om--Nystr\"om~\cite{LN11}
for an $A$-harmonic operator $A(x, \nabla u):=|\nabla
u(x)|^{\p-2}\nabla u(x)$ to obtain that $u$ is a subsolution in
$B(w, r)$.
Hence \eqref{eq:existence_measure1} is true.

Let $K\subset B(w,r)$ be compact.
By relation between the modular and the norm \eqref{ineq:norm-mod} we have that
\begin{equation}\label{eq:existence_measure4}
\|1\|_{L^{\px}(K)}\leq \max\{r^{\frac{n}{p^+}}, r^{\frac{n}{p^-}}\}.
\end{equation}
The variable exponent H\"older inequality \eqref{ineq:Holder} together with \eqref{eq:existence_measure4} give us that
for every compact $K \subset B(w, r)$ and every $\psi \in C_0^{\infty}(K)$ it holds that
\begin{align}\label{eq:existence_measure2}
\bigg|\int \limits_{\R^n} \langle  |\nabla u|^{p(x)-2} \nabla u,
\nabla \psi \rangle dx\bigg| \leq \int \limits_{K} |\nabla
u|^{p(x)-1} |\nabla\psi| dx
\leq 2 \sup_K |\nabla\psi|\,\, \|1\|_{L^{\px}(K)}\||\nabla u|^{\px-1}\|_{L^{p'(\cdot)}(K)}\nonumber\\
\leq C \sup_K |\nabla\psi| \max\{r^{\frac{n}{p^+}}, r^{\frac{n}{p^-}}\} \max\Bigg\{ \bigg( \int \limits_{K} |\nabla u|^{p(x)} dx \bigg)^\frac{p^+-1}{p^-},
\bigg(  \int \limits_{K} |\nabla u|^{p(x)} dx \bigg)^\frac{p^--1}{p^+} \Bigg\}.
\end{align}
Take $\eta\in C_{0}^{\infty}(B(w, 2r))$ with $0\leq \eta \leq 1,\eta \equiv 1$ on
$B(w, r)$ and $|\nabla \eta|\leq \frac{C}{r}$ for some $C > 1$.
We apply Lemma \ref{energy} to get
\begin{align}\label{eq:existence_measure3}
\int_K |\nabla u|^{p(x)} dx
&\leq \int_{B(w, r)}|\nabla u|^{\p} dx
\leq \int_{B(w, 2r)} |\nabla u|^{\p} \eta^{p^+} dx \nonumber\\
&\leq c(p^+) \int_{B(w, 2r)}|u|^{\p} |\nabla \eta|^{\p} dx
\leq c(p^+)( r^{-p^+} + r^{-p^-}) \int_{B(w, 2r)}|u|^{\p} dx\nonumber\\
&\leq c(p^+, n)\,r^{n} ( r^{-p^+} + r^{-p^-}) \big(1+\sup_{\Om\cap B(w, 2r)}u\big)^{p^+}
\leq C,
\end{align}
for some constant $C$.
By \eqref{eq:existence_measure1}, \eqref{eq:existence_measure2} and \eqref{eq:existence_measure3}
it follows that $\mu$, as defined in \eqref{p(x)measure_exists:assert}, is a non-negative distribution in $B(w,r)$ and hence also a positive measure in $B(w,r)$.
Since $u$ is $\px$-harmonic in $\Omega \cap B(w,r)$ and $u \equiv 0$ in $B(w, r)\backslash \bar\Omega$,
$\mu$ has support within $\partial \Omega \cap B(w,r)$.
\end{proof}

The following theorem is the main result of this section.
In the constant exponent setting similar results are well known,
see for example Eremenko--Lewis~\cite{EL91}, Kilpel\"ainen--Zhong~\cite{KZ03} and Lundstr\"om--Nystr\"om~\cite{LN11}.
Our result in the variable exponent setting extends partially
\cite{LN11}. Indeed, by taking $p=p^+=p^-$ in
Theorem~\ref{p(x)measure} we retrieve the corresponding estimates
for $p = const$, cf. Lemma 2.7 in \cite{LN11}.

\begin{theorem}\label{p(x)measure}
Let $\Omega \subset \Rn$ be a domain having a uniformly $\px$-fat complement with constants $c_0$ and $r_0$.
Assume that $w \in\partial\Omega$, $0 < r < r_0$ and that $p$ is a log-H\"older continuous variable exponent in $\Om$ with $1<p^-\leq \px \leq p^+<n$.
Suppose that $u$ is a positive $\px$-harmonic function in $\Omega \cap B(w,r)$,
continuous on $\bar \Omega \cap B(w,r)$ with $u \equiv 0$ on $\partial \Omega \cap B(w,r)$.
Extend $u$ to $B(w, r)$ by defining $u \equiv 0$ on $B(w, r)\backslash \Omega$ and denote this extension by $u$.
Then there exist constants $C$ and $\bar c$ such that the measure $\mu$ satisfies
\begin{align*}
(i) \qquad \mu(\partial \Omega \cap B(w,\bar r))^{\frac{p^+}{p^-(p^--1)}} \leq  C  \bar r^{\frac{n-p^+}{p^--1}} \sup_{B(w, 3 \bar r)\cap \Om}u,
\end{align*}
where $\bar r = r / \bar c$.
The constant $C$ depends on $n, p^-, p^+$,
while $\bar c$ depends on $n, p^-, p^+, c_{log}, \sup_{B(w, r)\cap\Om} u$ and $c_0, r_0$.

If in addition $p^->2$ then we also have, with $\ti r = r / \ti c$,
\begin{align*}
(ii) \qquad \sup_{B(w,\ti r)\cap \Om} u \leq c \left( \tilde r^{\frac{p^+(p^- - n)}{(p^+)^2-p^-}}\,\mu (\partial \Omega \cap B(w,r))^{\frac{p^{-}}{(p^+)^2-p^-}} + \tilde r \right).
\end{align*}
The constants $c$ and $\ti c$ depend on $n, p^-, p^+, c_{log}, \sup_{B(w, r)\cap\Om} u$ and $c_0, r_0$,
while $c$ additionally depends on $c_H$.
\end{theorem}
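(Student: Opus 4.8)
The plan is to derive both inequalities from the defining identity for $\mu$ in Lemma~\ref{p(x)measure_exists}, tested against suitable functions, together with the Caccioppoli estimate of Lemma~\ref{energy}, the variable-exponent H\"older inequality \eqref{ineq:Holder}, the modular--norm relations \eqref{ineq:norm-mod}, the log-H\"older scalings \eqref{logH-ball}--\eqref{logH-ball2}, and, for $(ii)$, the capacity estimate \eqref{cap-est} and local-boundedness results for $\px$-subsolutions. Throughout I would choose $\bar c$, $\ti c$ large enough that the relevant sub-balls are compactly contained in $B(w,r)$ and all these scaling estimates apply; this is where the dependence of $\bar c$, $\ti c$ on $c_{\log}$, $c_0$, $r_0$ and $\sup_{B(w,r)\cap\Om}u$ enters.

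For part $(i)$ I would take $\psi\in C_0^\infty(B(w,\tfrac32\bar r))$ with $\psi\equiv1$ on $B(w,\bar r)$, $0\le\psi\le1$ and $|\nabla\psi|\le C/\bar r$. Since $\mathrm{supp}\,\mu\subset\partial\Om\cap B(w,r)$,
\[
\mu(\partial\Om\cap B(w,\bar r))\le\int\psi\,d\mu=-\int_{\R^n}\langle|\nabla u|^{p(x)-2}\nabla u,\nabla\psi\rangle\,dx\le\int_{B(w,\frac32\bar r)}|\nabla u|^{p(x)-1}|\nabla\psi|\,dx.
\]
Estimating the right-hand side by \eqref{ineq:Holder}, passing between moduli and norms via \eqref{ineq:norm-mod}, bounding $\int_{B(w,3\bar r)}|\nabla u|^{p(x)}\,dx$ by Lemma~\ref{energy} together with $\int_{B(w,3\bar r)}|u|^{p(x)}\,dx\lesssim\bar r^{\,n}\big(\sup_{B(w,3\bar r)\cap\Om}u\big)^{p(w)}$ and \eqref{logH-ball2}, and finally collecting powers of $\bar r$, yields $(i)$. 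The asymmetric exponents $\tfrac{p^+}{p^-(p^--1)}$ and $\tfrac{n-p^+}{p^--1}$ are exactly what the passage between modular and norm in \eqref{ineq:norm-mod} produces; when $p^+=p^-=p$ the estimate collapses to $\mu(\Delta(w,\bar r))\lesssim\bar r^{\,n-p}\big(\sup u\big)^{p-1}$, the classical bound.

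Part $(ii)$ is the reverse direction and is the main obstacle. Writing $M:=\sup_{B(w,\ti r)\cap\Om}u$ and discarding the trivial case $M\lesssim\ti r$ (this is the source of the additive $\ti r$), I would test the identity of Lemma~\ref{p(x)measure_exists} against $\psi=\theta^{p^+}\phi(u)$, where $\theta\equiv1$ on $B(w,\ti r)$, $\theta\in C_0^\infty(B(w,2\ti r))$, $|\nabla\theta|\le C/\ti r$, and $\phi$ is a fixed bounded decreasing profile with $\phi(0)=1$ (for instance $\phi(t)=e^{-t/M}$, so $\psi\le1$ on $\mathrm{supp}\,\mu$). Integration by parts, together with Young's inequality used to absorb the cross term carrying $\nabla\theta$, gives a lower bound for $\mu$ of the form
\[
\int_{B(w,\ti r)}|\nabla u|^{p(x)}\,dx\lesssim M\,\mu(\partial\Om\cap B(w,r))+(\text{error in }\ti r,M),
\]
the error controlled by $\int|\nabla\theta|^{p(x)}\lesssim\ti r^{\,n-p^-}$ up to powers of $M$. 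To invert this one needs a lower bound for the Dirichlet energy in terms of $M$: local boundedness of $\px$-subsolutions applied to the zero-extension of $u$ (the De Giorgi--Nash--Moser machinery of Alkhutov--Krasheninnikova~\cite{alk2} and Harjulehto--Kinnunen--Lukkari~\cite{HKL07}) gives $M\lesssim\big(\vint_{B(w,2\ti r)}u^{q}\big)^{1/q}+\ti r$ for a fixed $q$, and a $\px$-Poincar\'e inequality, available because $\R^n\setminus\Om$ is uniformly $\px$-fat, converts $\vint u^{q}$ into $\ti r^{\,p}\vint|\nabla u|^{p(x)}$ after the embedding $L^{p(\cdot)}\hookrightarrow L^{q}$. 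Inserting the resulting lower bound for $\int_{B(w,\ti r)}|\nabla u|^{p(x)}$ into the previous display and solving the ensuing algebraic inequality for $M$ produces the exponents $\tfrac{p^+(p^--n)}{(p^+)^2-p^-}$ and $\tfrac{p^-}{(p^+)^2-p^-}$; since $(p^+)^2-p^-=p^+(p^+-1)+(p^+-p^-)$, in the constant case $p^+=p^-=p$ this is the exponent $\tfrac1{p-1}$ of Lemma~2.7 in~\cite{LN11}.

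The hard part is precisely this last step. One must (a) produce a sharp enough lower bound for $\int|\nabla u|^{p(x)}$ in terms of $\sup u$, which forces simultaneous use of local boundedness of $\px$-subsolutions and of the $\px$-fatness of the complement, and (b) keep the bookkeeping of the $p^+$-versus-$p^-$ discrepancies, the non-homogeneous $\ti r$-terms, and the modular--norm conversions consistent, so that the error terms do not swamp the main term after the absorption. The assumption $p^->2$ is what makes this absorption and the subsequent algebra go through (it guarantees $p^--1>1$, so that the dual exponents appearing after converting moduli to norms stay in the admissible range), and this is why part $(ii)$, unlike part $(i)$, requires it.
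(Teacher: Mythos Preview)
Your approach to part~$(i)$ is essentially the paper's: test the identity of Lemma~\ref{p(x)measure_exists} against a cutoff, apply H\"older~\eqref{ineq:Holder}, convert norm to modular via~\eqref{ineq:norm-mod}, and bound the energy by Caccioppoli (Lemma~\ref{energy}). The paper additionally normalizes so that $\sup_{B(w,3\bar r)}u<1$ and $\bar r<1$ (via the boundary oscillation Lemma~\ref{osc-lem}), which is how it cleanly resolves the $\max$ coming out of~\eqref{ineq:norm-mod}; but this is bookkeeping, not a different idea.

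Part~$(ii)$, however, has a genuine gap. Testing with $\psi=\theta^{p^+}\phi(u)$ forces a cutoff $\theta$, and after absorbing the cross term by Young you obtain, schematically,
\[
\int_{B(w,\ti r)}|\nabla u|^{p(x)}\,dx \lesssim M\,\mu(\Delta(w,2\ti r)) + M^{p^{\pm}}\,\ti r^{\,n-p^{\mp}}.
\]
But the lower bound you propose for the left-hand side, via local boundedness and a fatness-Poincar\'e inequality, is of the \emph{same} order $M^{p^{\pm}}\ti r^{\,n-p^{\mp}}$ as the error term. The two sides cancel and you cannot extract any control of $M$ by $\mu$. This is not a bookkeeping issue: the cutoff error is unavoidable in your test function and is exactly as large as the signal.

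The paper avoids this by never introducing a cutoff. It solves for the $\px$-harmonic function $h$ in $B(w,\ti r)$ with boundary values $u$, so that $h\ge u$ by comparison, and then takes
\[
\psi=\min\bigl\{h-u,\ \max\{0,\beta M-\ti r\}\bigr\},
\]
which lies in $W_0^{1,\px}(B(w,\ti r))$ \emph{because $h-u=0$ on $\partial B(w,\ti r)$}. Using the monotonicity inequality $\langle|\xi|^{p-2}\xi-|\eta|^{p-2}\eta,\xi-\eta\rangle\gtrsim(|\xi|+|\eta|)^{p-2}|\xi-\eta|^2$ for $p\ge2$ (this is the true reason $p^->2$ is assumed, not the dual-exponent issue you mention), one gets $\int|\nabla\psi|^{p(x)}\lesssim(\beta M-\ti r)\,\mu$ with no error term. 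Since Harnack on $h$ and the boundary decay of $u$ (Lemma~\ref{holderkont}) make $\psi$ constant equal to $\beta M-\ti r$ on a small ball, a Sobolev--Poincar\'e inequality then gives $(\beta M-\ti r)^{p^+}\ti r^n\lesssim\ti r^{\,p^-}\|\nabla\psi\|^{p^-}_{L^{\px}}$, and combining the two yields~$(ii)$. The introduction of the comparison function $h$ is the missing idea.
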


\begin{rem}
The assumption that the complement of $\Omega$ is uniformly $\px$-fat can be replaced by a growth
estimate on the solution $u$ near $\partial \Omega$. In particular, we use the uniform $\px$-fatness only to be able to apply the H\"older continuity result (Lemma \ref{osc-lem}) giving \eqref{eq:u+r<1}.
\end{rem}

\begin{proof}
The proof relies on ideas of the constant $p$ case,
see Eremenko--Lewis~\cite[Lemma 1]{EL91} and Kilpel\"ainen--Zhong~\cite[Lemma 3.1]{KZ03}.
However, the setting of variable exponent PDEs is causing difficulties in a straightforward extension of $p=const$ arguments. Namely, the lack of homogeneity of $\px$-harmonic equation and the fact that the homogeneous Sobolev-Poincar\'e inequality~\eqref{var-exp-Sob-Poin} holds for norms but not for modular functions, require more caution and delicate approach.

We start by choosing $\bar c>6$ so large that with $\bar r = r / \bar c$ we obtain
\begin{align}\label{eq:u+r<1}
\sup_{B(w, 3\bar r)} u < 1 \qquad \textrm{and} \qquad  \bar r < 1.
\end{align}
That such $\bar c$ exists follows by H\"older continuity up to the boundary,
that is Lemma \ref{osc-lem}.
Indeed, in order to prove \eqref{eq:u+r<1}, put $\rho = 3\bar r = 3r / \bar c$ in Lemma \ref{osc-lem} to obtain
\begin{equation*}
\sup_{B(w,3\bar r)} u=\sup_{B(w, 3\bar r)\cap \Om} u \leq c
\bigg(\frac{3}{\bar c}\bigg)^{\beta} \big(\sup_{B(w, r)\cap \Om} u +
r\big) \leq 1
\end{equation*}
if $\bar c$ is large enough.
The constant $\bar c$ depends on $n, p^-, p^+, c_{log}, \sup_{B(w, r)\cap\Om} u$ and $c_0, r_0$.

We now prove the upper bound of the measure in Theorem
\ref{p(x)measure}. To simplify the notation we define
$\Delta(w,r):= \partial \Omega \cap B(w,r)$. Let $\theta \in
C_{0}^{\infty}(B(w, 2 \bar r))$ be such that $0\leq \theta\leq 1$,
$\theta\equiv 1$ on $B(w, \bar r)$ and $|\nabla \theta|\leq
\frac{C}{ \bar r}$ for some $C>1$. Using H\"older's
inequality~\eqref{ineq:Holder} and estimate
\eqref{eq:existence_measure2} we see that
\begin{align}\label{eq:rhs_energy1}
\mu(\Delta(w, \bar r)) &\leq  \int \limits_{\R^n}  \theta d \mu \leq
\int \limits_{\R^n} |\nabla  \theta| |\nabla u|^{\p-1} dx
\leq C  {\bar r}^{\frac{n}{p^+}-1}\| |\nabla u|^{\px-1} \|_{L^{p'(\cdot)}(B(w,2 \bar r))}  \nonumber\\
&\leq C {\bar r}^{\frac{n}{p^+}-1} \max\Bigg\{ \bigg( \int \limits_{B(w,2 \bar r)} |\nabla u|^{p(x)} dx \bigg)^\frac{p^+-1}{p^-},
\bigg(  \int \limits_{B(w,2 \bar r)} |\nabla u|^{p(x)} dx \bigg)^\frac{p^--1}{p^+} \Bigg\}.
\end{align}
Now let $\eta \in C_0^{\infty}(B(w,3 \bar r))$ with $0 \leq \eta \leq
1$, $\eta \equiv 1$ on $B(w,2\bar r)$ and $|\nabla \eta|\leq
\frac{C}{\bar r}$ for some $C > 1$. We apply Lemma~\ref{energy} and
\eqref{eq:u+r<1} to obtain
\begin{align}\label{eq:rhs_energy2}
\int_{B(w,2 \bar r)} |\nabla u|^{p(x)} dx
& \leq \int_{B(w,3 \bar r)}|\nabla u|^{\p} \eta^{p^+} dx \leq c(p^+)\, \int_{B(w,3 \bar r)}|u|^{\p} |\nabla \eta|^{\p} dx \nonumber\\
& \leq \frac{C(p^+)}{ {\bar r}^{p^+}}\, \int_{B(w, 3 \bar r)}|u|^{\p} dx
\leq C(p^+, n) \, {\bar r}^{n - p^+}\,\big(\sup_{B(w, 3 \bar r)}u\big)^{p^-}.
\end{align}
Using \eqref{eq:rhs_energy1}, \eqref{eq:rhs_energy2}, \eqref{eq:u+r<1} and the assumption  $p^+\leq n$ we obtain
\begin{align*}
\mu(\Delta(w, \bar r))
&\leq  C(p^-,p^+,n)\, {\bar r}^{\frac{n}{p^+}-1} \max\Bigg\{ \bigg( {\bar r}^{n - p^+}\,\big(\sup_{B(w, 3 \bar r)}u\big)^{p^-} \bigg)^\frac{p^+-1}{p^-},
\bigg( {\bar r}^{n - p^+}\,\big(\sup_{B(w, 3\bar r)}u\big)^{p^-} \bigg)^\frac{p^--1}{p^+} \Bigg\}\\
& \leq  C(p^-,p^+,n)\, {\bar r}^{\frac{(n-p^+)p^-}{p^+}} \big(\sup_{B(w, 3 \bar r)}u\big)^{ \frac{p^-}{p^+} (p^--1) }.
\end{align*}
which completes the proof for upper estimates of the measure $\mu$.\\

\bigskip

We next prove the lower bound of the measure in Theorem
\ref{p(x)measure}. To do so let $\ti r = r/\ti c$ be a radius, for
$\ti c$ to be determined later, and let $h$ be $\px$-harmonic in
$B(w, \ti r)$ with boundary values equal to $u$ on $\partial B(w,
\ti r)$. Note that by assumptions $u$ is continuous on $\bar{\Omega}
\cap {\bar{B}(w, \ti r)}$ and hence $u$ is well defined on $\partial
B(w, \ti r)$. Existence of $h$ follows from e.g. Theorem~3.6 in
Adamowicz--Bj\"orn--Bj\"orn~\cite{ABB13}. By the comparison
principle (Lemma~\ref{comparison_princ}) we see that $0 \leq u \leq
h$ in $B(w, \ti r)$. Now, by the Harnack inequality
(Lemma~\ref{harnack}) we have
\begin{align*}
\sup_{B(w, \ti r/2)} h \; \leq \; c_H \Big( \inf_{B(w, \ti r/2)} h +  \ti r \Big)
\end{align*}
and so
\begin{align}\label{eq:6infsup}
 \inf_{B(w, \ti r/2)} h\; \geq \; c_H^{-1}  \sup_{B(w,  \ti r/2)} u -  \ti r .
\end{align}
Using Lemma~\ref{holderkont} we obtain that for $t < 1/4$ we have,
\begin{align}\label{eq:6uM}
\sup_{B(w, t \ti r)} u \leq \bar{C} t^{\gamma} \Big(\sup_{B(w, \ti
r/2)} u + \ti r \Big).
\end{align}
Using (\ref{eq:6infsup}) and (\ref{eq:6uM}) we see that if $x \in
B(w, t\ti r)$ and $t$ is so small that $\bar{C}t^{\gamma} \leq 1/(2c_H)$,
then
\begin{align}\label{eq:6h-u}
h(x) - u(x) &\geq \inf_{B(w, \tilde r/2)} h   -   \sup_{B(w, t\tilde r)} u \nonumber\\
&\geq c_H^{-1} \sup_{B(w,  \ti r/2)} u -  \ti r   -   \bar{C} t^{\gamma} \Big(\sup_{B(w, \ti r/2)} u + \ti r \Big)  \nonumber\\
&\geq (2 c_H)^{-1} \sup_{B(w,  \ti r/2)} u   -  \left(1 + \bar{C} t^{\gamma}\right)\ti r\nonumber\\
&\geq \beta \sup_{B(w,  \ti r/2)} u - \ti r = \beta M(\ti r) - \ti r.
\end{align}
Hence, $M(\tilde r) = \sup_{B(w,\tilde r/2)} u$ while $\beta$ is a small constant satisfying $\beta\leq 1/(2c_H(1+\bar{C}t^\gamma))$ where $\gamma$ and $\bar{C}$ are from Lemma~\ref{holderkont}.
We note that $\beta$ depends on $n, p^-, p^+, c_{log}, c_H, \sup_{B(w, r)\cap\Om} u$ and $c_0, r_0$.
Next we note that by \eqref{eq:6h-u} a function
\begin{align}\label{def:psi}
\psi := \min_{B(w, \ti r)} \{h - u, \max \{0, \beta M(\ti r) -\ti r\} \}
\end{align}
is non-negative in $B(w, \ti r)$ and belongs to $W_0^{1, \px}(B(w, \ti r))$.
Using (\ref{eq:6h-u}) we also see that $\psi = \max \{0, \beta M(\ti r) -\ti r\}$ on $B(w, t\ti r)$.

We will now show that
\begin{align}\label{eq:upper_gradient_psi}
 \int \limits_{B(w,\ti r)} |\nabla \psi|^{p(x)} dx \leq 2^{p^+-1}\max \{0, \beta M(\ti r) -\ti r\} \mu (\Delta(w, \ti r)).
\end{align}
To do so,
let $\Gamma$ denote the set of points where $\nabla \psi$ exists and is nonzero and note that
\begin{align}\label{eq:upper_1}
 \int \limits_{B(w,\ti r)} |\nabla \psi|^{p(x)} dx \leq
 \int \limits_{\Gamma \cap B(w,\ti r)} \left(|\nabla h| + |\nabla u|\right)^{p(x) - 2} |\nabla h - \nabla u|^2 dx.
\end{align}
\noindent Moreover, for $\xi, \eta \in \R^n$,
\begin{align*}
\langle |\xi|^{p(x)-2}\xi - |\eta|^{p(x)-2}\eta, \xi-\eta \rangle
&= \frac12\left(|\xi|^{p(x)-2} + |\eta|^{p(x)-2}\right) |\xi - \eta|^2 + \frac12\left(|\xi|^{p(x)-2} - |\eta|^{p(x)-2}\right) \left( |\xi|^2 - |\eta|^2\right).
\end{align*}
Therefore and since $p^-\geq 2$ by assumption,
\begin{align*}
\langle |\xi|^{p(x)-2}\xi - |\eta|^{p(x)-2}\eta, \xi-\eta \rangle
&\geq \frac12\left(|\xi|^{p(x)-2} + |\eta|^{p(x)-2}\right) |\xi - \eta|^2 \geq \frac1{ 2^{p^+-1}}\left(|\xi| + |\eta|\right)^{p(x)-2} |\xi - \eta|^2.
\end{align*}
Upon using the last inequality and the fact that $h$ is $\px$-harmonic in $B(w, \ti r)$ and
$\psi$ is an appropriate test function for $h$ together with Lemma~\ref{p(x)measure_exists} we obtain
\begin{align*}
& \frac1{ 2^{p^+-1}}  \int  \limits_{\Gamma \cap B(w,\ti r)}  \left(|\nabla h| + |\nabla u|\right)^{p(x)-2} |\nabla h - \nabla u|^2 dx
\leq   \int \limits_{\Gamma \cap B(w,\ti r)}
\langle |\nabla h|^{p(x)-2} \nabla h - |\nabla u|^{p(x)-2} \nabla u, \nabla h - \nabla u  \rangle dx \\
&=  \int \limits_{B(w,\ti r)} \langle |\nabla h|^{p(x)-2} \nabla h, \nabla \psi \rangle dx
 -  \int \limits_{B(w,\ti r)} \langle |\nabla u|^{p(x)-2} \nabla u, \nabla \psi \rangle dx \\
&= -  \int \limits_{B(w,\ti r)} \langle |\nabla u|^{p(x)-2} \nabla u, \nabla \psi \rangle dx
 =    \int \limits_{B(w,\ti r)} \psi d\mu \leq \max \{0, \beta M(\ti r) -\ti r\} \mu(B(w,\ti r)).
\end{align*}
Since measure $\mu$ is supported on $\Delta(w,\ti r)$ we see that $\mu(B(w,\ti r))=\mu(\Delta(w,\ti r))$.
Hence, by the above inequality and \eqref{eq:upper_1} we see that \eqref{eq:upper_gradient_psi} holds true.

Next, by assuming $\ti c \geq \bar{c}$ it follows from
\eqref{eq:u+r<1} that we have $\beta M(\ti r) - \ti r < 1$. Note
that now $\ti c$ depends on $n, p^-, p^+, c_{log}, \sup_{B(w,
r)\cap\Om} u$ and $c_0, r_0$. Using this fact and the definition of
$\psi$ in \eqref{def:psi} we get that $0 \leq \psi \leq 1$ and thus
$ \int_{B(w,\tilde r)} |\psi|^{p(x)} dx\leq \omega_n \tilde
r^n\leq  \omega_n.$  The classical formula for the volume of the unit ball implies, that if $1\leq n \leq 12$, then $\omega_n> 1$. It follows that
\begin{equation}
 \int \limits_{B(w,\tilde r)} \bigg|\frac{\psi}{\omega_n^{1/p^-}}\bigg|^{p(x)} dx\leq
\frac{1}{\omega_n} \int \limits_{B(w,\tilde r)} |\psi|^{p(x)} dx\leq 1.\label{eq:px-meas1}
\end{equation}
If $n>12$, then $\omega_n<1$ and so in \eqref{eq:px-meas1} instead of
$\omega_n^{1/p^-}$ one has $\omega_n^{1/p^+}$. Eventually, this effects only the power of $\omega_n$ in
\eqref{ineq:px-meas} which for $\omega_n<1$ is $1-p^+/p^--p^-/p^+$ instead of $2-p^+/p^-$ but has no impact
on the other expressions in the discussion below. Therefore, we present the argument only in the case of $\omega_n>1$.

By the unit ball property \eqref{unit-ball} we get
\begin{equation}
\left\|\bigg|\frac{\psi}{\omega_n^{1/p^-}}\bigg|\right\|_{L^{\px}(B(w,\tilde r))}\leq 1
\qquad\hbox{ and }
 \int \limits_{B(w,\tilde r)} \bigg|\frac{\psi}{ \omega_n^{1/p^-}}\bigg|^{p(x)} dx\leq
 \left\|\bigg|\frac{\psi}{ \omega_n^{1/p^-}}\bigg|\right\|^{p^-}_{L^{\px}(B(w,\tilde r))}.\label{eq:px-meas2}
\end{equation}
This estimate, the definition of $\psi$ and the
Poincar\'e-Sobolev type inequality (see Theorem~8.2.4 in
Diening--Harjulehto--H\"ast\"o--R\r u\v zi\v cka~\cite{DHHR}) imply
the following
\begin{align}\label{eq:lower_gradient_psi}
( \beta M(\ti r) -\ti r )^{p+} \omega_n (t\ti r)^n  \omega_n^{-p^+/p^-}&\leq  \omega_n^{-p^+/p^-}\int \limits_{B(w,\ti r)} |\psi|^{p(x)} dx \nonumber \\
&\leq \left\|\bigg|\frac{\psi}{ \omega_n^{1/p^-}}\bigg|\right\|^{p^-}_{L^{\px}(B(w,\tilde r))}
\leq \frac{C^{p^-}_{Sob}}{ \omega_n}\,\tilde r^{p^-} \| \nabla \psi \|^{p^-}_{L^{\px}(B(w,\tilde r))},
\end{align}
where $C_{Sob}$ depends on $n$ and $c_{log}$.
In order to pass from the norm of the gradient to its modular we use similar approach as in \eqref{eq:px-meas1}
and \eqref{eq:px-meas2}.
For the sake of brevity and clarity of the presentation we will skip some of the tedious computations.

Without the loss of generality we may assume that $\mu
(\Delta(w, \ti r))\leq 2^{1-p^+}$. Indeed, this can be obtained by
using the upper bound of $\mu (\Delta(w, \ti r))$ proved above ($(i)$ in Theorem \ref{p(x)measure})
together with \eqref{eq:u+r<1} and by decreasing $\ti r$ if necessary.
Note that $\tilde c$ depends on $n, p^-, p^+, c_{log}, \sup_{B(w, r)\cap\Om} u$ and $c_0, r_0$.
Then by \eqref{eq:upper_gradient_psi} we have that the modular function of
$\nabla \psi$ does not exceed value one and thus, by \eqref{ineq:norm-mod}
$$
\|\nabla \psi\|^{p^+}_{L^{\px}(B(w,\ti r))}\leq  \int \limits_{B(w,\ti r)} |\nabla \psi|^{\p} dx.
$$
We continue estimation in \eqref{eq:lower_gradient_psi}. Using the above we arrive at the following inequality,
\begin{align*}
 (\beta M(\ti r) -\ti r )^{p+} \omega_n (t\ti r)^n \omega_n^{-p^+/p^-} &\leq \frac{C^{p^-}_{Sob}}{\omega_n} \tilde r^{p^-} \bigg(\int \limits_{B(w,\ti r)} |\nabla \psi|^{\p} dx\bigg)^{\frac{p^-}{p^+}}.
\end{align*}
Hence, upon using \eqref{eq:upper_gradient_psi} and including $\omega_n^{2-p^+/p^-}$ into the constant on the right-hand side of the above inequality, we get:
\begin{align}
 (\beta M(\ti r) -\ti r )^{p^+-\frac{p^-}{p^+}} t^n \leq C \tilde r^{p^--n}\,\left(\mu (B(w, \ti r))\right)^{\frac{p^-}{p^+}},
 \label{ineq:px-meas}
\end{align}
for some $C$ depending on $n, p^-, p^+$ and $c_{log}$.
Recall that according to discussion following \eqref{eq:6uM} we have that $\bar{C}t^{\gamma} \leq 1/(2c_H)$.
Choose $t$ such that $\bar{C}t^\gamma=1/(4c_H)$.
Then \eqref{ineq:px-meas} becomes
\begin{equation*}
( \beta M(\ti r) - \ti r)^{\frac{(p^+)^2}{p^-}-1}\leq C \tilde r^{p^+-\frac{p^+}{p^-}n}\,\mu (B(w, \ti r)),
\end{equation*}
for $C$ depending on $n, p^-, p^+, c_{log}, c_H, \sup_{B(w, r)\cap\Om} u$ and $c_0, r_0$.
Thus we finally conclude
\begin{equation*}
\sup_{B(w,\ti r)} u \leq C \left( \tilde r^{\frac{p^+(p^- - n)}{(p^+)^2-p^-}}\,\mu (B(w, \ti r))^{\frac{p^{-}}{(p^+)^2-p^-}} + \tilde r \right),
\end{equation*}
for some $C$ as above. Thus, the proof of Theorem~\ref{p(x)measure} is completed.
\end{proof}
Using Theorem \ref{Carleson} and Theorem \ref{p(x)measure} we obtain the following
weak doubling property of the $\px$-harmonic measure.
\begin{cor}\label{cor:doubling}
Assume that $\Omega\subset\R^{n}$ is an NTA domain with constants $M_\Om$ and $r_\Om$,
$w\in\partial\Omega$, $0 < r < r_\Om$ and let
$\px$ be a log-H\"older continuous variable exponent in $\Om$ with $2<p^-\leq \px \leq p^+<n$.
Suppose that $u$ is a positive $\px$-harmonic function in $\Omega \cap B(w,r)$,
continuous on $\bar \Omega \cap B(w,r)$ with $u \equiv 0$ on $\partial \Omega \cap B(w,r)$.
Extend $u$ to $B(w, r)$ by defining $u \equiv 0$ on $B(w, r)\backslash \Omega$ and denote this extension by $u$.
Then the measure $\mu$ satisfies the following doubling property:
\begin{align*}
&\mu(\partial\Om\cap B(w, 2 s))^{\frac{p^+}{p^-(p^--1)}} \leq  c s^{\alpha}  \left(\,\mu (\partial\Om\cap B(w,  s))^{\frac{p^{-}}{(p^+)^2-p^-}} + s^{\beta} \right),
\end{align*}
where $s = r / c$ and the constant $c$ depends on $n, p^-, p^+, c_{log}$, $c_H, \sup_{B(w, r)\cap\Om} u, M_\Om$ and $r_\Om$.
The exponents $\alpha = \alpha(n, p^+,p^-)$ and $\beta = \beta(n, p^+,p^-)$ are given by
\begin{align*}
&\alpha=\frac{(p^+-p^-)(p^+(n-p^+-p^-)+n)}{(p^--1)((p^+)^2-p^-)}\quad \textrm{and}\quad\beta=\frac{(p^+)^2-p^--p^+(p^--n)}{(p^+)^2-p^-}.
\end{align*}
In particular, for $p=p^+=p^-$ we get $\alpha=0$ and the term $s^{\beta}$ goes away as well.
Hence we retrieve the well known doubling property of $p$-harmonic measure when $p$ is constant.
\end{cor}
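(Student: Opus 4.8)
The plan is to sandwich $\mu(\partial\Omega\cap B(w,2s))$ between the upper bound of Theorem~\ref{p(x)measure}(i), used on the ball $B(w,2s)$, and the lower bound of Theorem~\ref{p(x)measure}(ii), used on the ball $B(w,s)$, and to close the gap between the two by the Carleson estimate of Theorem~\ref{Carleson} combined with a Harnack chaining argument. All three ingredients are available under the present hypotheses: since $2<p^-\le\px\le p^+<n$, both parts of Theorem~\ref{p(x)measure} apply (the requirement $p^->2$ is exactly what part (ii) needs) and $p^+\le n$ makes Theorem~\ref{Carleson} applicable; moreover an NTA domain has a uniformly $\px$-fat complement (see the discussion preceding Theorem~\ref{Carleson}), so Theorem~\ref{p(x)measure} may be invoked with $u$ the given extended solution and $\mu$ its associated measure from Lemma~\ref{p(x)measure_exists} (by uniqueness in that lemma, the measure associated with $u$ on a smaller concentric ball is simply the restriction of $\mu$, so all quantities $\mu(\partial\Omega\cap B(w,\cdot))$ below are unambiguous). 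Throughout I would fix a large constant $c$, its size governed by the finitely many smallness conditions encountered below and depending only on the parameters listed in the statement, and set $s=r/c$; enlarging $c$ only relaxes conditions such as $6s<r_\Omega$, $s$ below the fatness radius, and the various outer radii used below being at most $r$.

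The first step is the auxiliary estimate
\[
\sup_{B(w,6s)\cap\Omega}u\ \le\ C_1\Big(\sup_{B(w,s/\tilde c)\cap\Omega}u+s\Big),
\]
valid for $c$ large, where $\tilde c$ is the constant from Theorem~\ref{p(x)measure}(ii) and $C_1$ depends only on the admissible parameters. To obtain it I would apply Theorem~\ref{Carleson} with outer radius $6sc'$ (legitimate once $6sc'\le r$), giving $\sup_{B(w,6s)\cap\Omega}u\le c\big(u(a_{6s}(w))+6s\big)$; then, since the corkscrew points $a_{6s}(w)$ and $a_{s/\tilde c}(w)$ both lie in $B(w,6s)$ at distance $\gtrsim s/\tilde c$ from $\partial\Omega$ and at comparable scales, the uniform (NTA) structure provides a Harnack chain joining them whose number of balls is bounded solely in terms of $M_\Omega$ and $\tilde c$, via the quasihyperbolic estimate leading to \eqref{N-est}--\eqref{ineq:N-indep}. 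Iterating Lemma~\ref{harnack} along this chain with a single Harnack constant $c_H$ (admissible because $\sup_{\Omega\cap B(w,r)}u$ is controlled) then yields $u(a_{6s}(w))\le C\big(u(a_{s/\tilde c}(w))+s\big)\le C\big(\sup_{B(w,s/\tilde c)\cap\Omega}u+s\big)$, and combining with the Carleson bound proves the auxiliary estimate.

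Next I would chain the three estimates. Theorem~\ref{p(x)measure}(i), applied on the ball $B(w,2s\bar c)$ (contained in $B(w,r)$ once $c\ge2\bar c$, so that its conclusion holds at scale $\bar r=2s$, hence $3\bar r=6s$), gives
\[
\mu(\partial\Omega\cap B(w,2s))^{\frac{p^+}{p^-(p^--1)}}\ \le\ C\,(2s)^{\frac{n-p^+}{p^--1}}\sup_{B(w,6s)\cap\Omega}u.
\]
Estimating $\sup_{B(w,6s)\cap\Omega}u$ by the auxiliary estimate and then invoking Theorem~\ref{p(x)measure}(ii) on $B(w,s)$ (whose left-hand side is precisely $\sup_{B(w,s/\tilde c)\cap\Omega}u$) bounds the right-hand side by
\[
C\,s^{\frac{n-p^+}{p^--1}}\Big(s^{\frac{p^+(p^--n)}{(p^+)^2-p^-}}\mu(\partial\Omega\cap B(w,s))^{\frac{p^-}{(p^+)^2-p^-}}+s\Big),
\]
the fixed powers of $2$ and of $\tilde c$ absorbed into $C$. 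A direct computation gives the identities
\[
\frac{n-p^+}{p^--1}+\frac{p^+(p^--n)}{(p^+)^2-p^-}=\alpha
\qquad\text{and}\qquad
\frac{n-p^+}{p^--1}+1=\alpha+\beta
\]
with $\alpha,\beta$ as in the statement, so the previous display equals $C\,s^{\alpha}\big(\mu(\partial\Omega\cap B(w,s))^{\frac{p^-}{(p^+)^2-p^-}}+s^{\beta}\big)$, which is the assertion. For $p=p^+=p^-$ these formulas give $\alpha=0$ and $\beta=(n-1)/(p-1)$; since in the constant-exponent case the error terms ``$+\tilde r$'' and ``$+r'$'' in Theorems~\ref{p(x)measure} and~\ref{Carleson} are absent (they can be scaled away), the $s^\beta$ term vanishes and the classical doubling of $p$-harmonic measure is recovered.

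I expect the main obstacle to be the bookkeeping behind the auxiliary estimate: one must check that the Harnack chain joining the two corkscrew points stays inside $B(w,r)\cap\Omega$, where $u$ is $\px$-harmonic --- this holds once $c$ is large enough, since the chain remains within $O(M_\Omega s)$ of $w$ --- and that its length, hence the power of $c_H$, is controlled independently of $s$, which is precisely the chaining estimate \eqref{N-est}--\eqref{ineq:N-indep} developed in Section~\ref{Sect-Carleson}. Everything else is the elementary exponent algebra above together with the routine tracking of constant dependencies through the three applications.
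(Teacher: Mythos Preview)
Your proposal is correct and follows essentially the same route as the paper: apply Theorem~\ref{p(x)measure}(i) at scale $2s$, pass from $\sup_{B(w,6s)\cap\Omega}u$ to a corkscrew value via Theorem~\ref{Carleson}, Harnack-chain down to the scale needed for Theorem~\ref{p(x)measure}(ii), and then do the exponent algebra. Your version is in fact slightly more careful than the paper's short proof in that you chain directly to $a_{s/\tilde c}(w)$ so that part~(ii) applies verbatim with outer radius $s$, whereas the paper stops at $\sup_{B(w,s)\cap\Omega}u$ and leaves this adjustment implicit; your explicit verification of the identities $\frac{n-p^+}{p^--1}+\frac{p^+(p^--n)}{(p^+)^2-p^-}=\alpha$ and $\frac{n-p^+}{p^--1}+1=\alpha+\beta$ is also a welcome addition.
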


\begin{proof}
Let $c$ be so large that $2s \leq \ti r$ where $\ti r$ is as in Theorem \ref{p(x)measure}.
Then
\begin{align*}
\mu(\partial\Om\cap B(w, 2 s))^{\frac{p^+}{p^-(p^--1)}} &\leq  c s^{\frac{n-p^+}{p^--1}} \sup_{B(w, 6s)\cap \Om}u.
\end{align*}
By the variable exponent Carleson estimate (Theorem \ref{Carleson}) and the Harnack inequality (Lemma~\ref{harnack}) we have
\begin{align*}
\sup_{B(w, 6s)\cap \Om}u
\leq  c \left( u(a_{6s}(w)) + s \right)
\leq  c \left( u(a_{s}(w)) + s \right)
\leq  c \big( \sup_{B(w, s)\cap \Om}u + s\big).
\end{align*}
The result now follows by applying the lower bound of the $\px$-harmonic measure in Theorem \ref{p(x)measure} and by
simplification of the arising formula.

\end{proof}

\end{document}